 \theoremstyle{plain}
\newtheorem{theorem}{Theorem}
\newtheorem{model}{Model}
\newtheorem{lemma}[theorem]{Lemma}
\newtheorem{corollary}[theorem]{Corollary}
\newtheorem{proposition}[theorem]{Proposition}
\theoremstyle{definition}
\newtheorem*{definition}{Definition}
\newtheorem*{remark}{Remark}
\newcommand{\cadlag}{c\`adl\`ag}
\newcommand{\defin}[1]{{\bf #1}}
\newcommand{\re}{\ensuremath{\mathbb{R}}}
\newcommand{\paren}[1]{\ensuremath{\left( #1\right) }}
\newcommand{\F}{\ensuremath{\mathscr{F}}}
\newcommand{\p}{\mathbb{P}}
\newenvironment{esn}{\begin{equation*}}{\end{equation*}}
\newcommand{\imf}[2]{\ensuremath{#1\!\paren{#2}}}
\newcommand{\se}{\ensuremath{\mathbb{E}}}
\newcommand{\cond}[2]{\left.\vphantom{#2}#1\ \right| #2}
\newcommand{\bra}[1]{\ensuremath{\left[ #1\right] }}
\newcommand{\na}{\ensuremath{\mathbb{N}}}
\newcommand{\set}[1]{\ensuremath{\left\{ #1\right\} }}
\newcommand{\indi}[1]{\si_{#1}}
\newcommand{\si}{{\ensuremath{\bf{1}}}}
\newcommand{\realtree}{\ensuremath{\re}\nbd tree}
\newcommand{\bb}[1]{\mathbb{#1}}
\newcommand{\sa}{\ensuremath{\sigma}\nbd field}
\newcommand{\nbd}{\nobreakdash -}
\newcommand{\leb}{\text{Leb}}
\newcommand{\clo}[1]{\ensuremath{\overline{#1}}}
\newcommand{\abs}[1]{\hspace{.25mm}\left|#1\right|\hspace{.25mm}}
\newcommand{\imi}[2]{#2^{-1}\!\paren{#1}}
\newcommand{\ball}[2]{\imf{B_{#1}}{#2}}
\newcommand{\q}{\ensuremath{ \bb{Q}  } }
\newcommand{\sag}[1]{\sigma\!\paren{#1}}
\newcommand{\ra}{\ensuremath{\mathbb{Q}}}
\newcommand{\mc}[1]{\ensuremath{\mathscr{#1}}}
\newcommand{\G}{\ensuremath{\mc{G}}}
\newcommand{\floor}[1]{\ensuremath{\lfloor #1\rfloor}}
\newcommand{\cts}{\ensuremath{\mathbf{C}}}
\newcommand{\tr}[1]{\ensuremath{\mathbf{#1}}}
\newcommand{\cemetery}{\dagger}
\newcommand{\eps}{\ensuremath{ \varepsilon}}
\newcommand{\fun}[3]{\ensuremath{#1:#2\to #3}}
\newcommand{\fund}[3]{\ensuremath{#1:#2\mapsto #3}}
\title{Totally ordered measured trees and splitting trees with infinite variation}
\author{Amaury Lambert}
\address{Laboratoire de Probabilit\'es et Mod\`eles Al\'eatoires\\
UPMC Univ Paris  06\\
Case courrier 188\\
4, Place Jussieu\\
75252 PARIS Cedex 05}
\author{Ger\'onimo Uribe Bravo}
\address{Instituto de Matem\'aticas\\ UNAM \\
\'Area de la Investigaci\'on Cient\'ifica, Circuito Exterior, Ciudad Universitaria\\ Coyoac\'an, 04510. M\'exico, D. F. }
\subjclass[2010]{
60G51
, 60J80
, 05C05 
, 92D25 
}
\thanks{GUB's research is supported by UNAM-DGAPA-PAPIIT grant no. IA101014. 
AL thanks the \emph{Center for Interdisciplinary Research in Biology} (Coll\`ege de France, Paris) for funding.} 
\begin{document}
\begin{abstract}
Combinatorial trees can be used to represent genealogies of asexual individuals. 
These individuals can be endowed with birth and death times, to obtain a so-called `chronological tree'.
In this work, we are interested in the continuum analogue of chronological trees in the setting of real trees. 
This leads us to consider totally ordered and measured trees, abbreviated as TOM trees. 
First, we define an adequate space of TOM trees and prove that under some mild conditions, every compact TOM tree can be represented in a unique way by a so-called contour function, which is right-continuous, admits limits from the left and has non-negative jumps. The appropriate notion of contour function is also studied in the case of locally compact TOM trees. 
Then we study the splitting property of (measures on) TOM trees which extends the notion of `splitting tree' studied in \cite{MR2599603}, where during her lifetime, each individual gives birth at constant rate to independent and identically distributed copies of herself.
We prove that the contour function of a TOM tree satisfying the splitting property is associated to a
spectrally positive L\'evy process that is not a subordinator, both 
in the critical and subcritical cases  of compact trees as well as in the supercritical case of locally compact trees. 
\end{abstract}
\maketitle
\tableofcontents
\section{Introduction}
\label{IntroductionSection}
\subsection{Motivation}
\label{motivationSubsection}
Consider the following population dynamics.
\begin{model}
\label{finiteSplittingTreesModel}
Individuals have i.i.d. lifetimes in $(0,\infty]$ during which they give birth at times of independent Poisson point processes with the same intensity to independent copies of themselves, giving rise to exactly one offspring at each birth event. 
\end{model}
The global history of the population can be encoded by a so-called `chronological tree' as depicted in Figure \ref{DiscreteChronologicalTree}(D). The vertical segments represent the individuals together with their lifetimes, and the vertical axis represents time flowing upwards. 
Hence, the lower endpoint of each vertical segment represents the birth-time while the upper endpoint represents death-time. 
The ancestors of segments can be found by following the dotted lines at the bottom of each segment. 
Following segments downwards and dotted lines to the left gives us the ancestral lines of the tree. 
In \cite{MR2599603} a generalization of the preceding model is proposed and called a `splitting tree', 
following the terminology introduced in \cite{MR1601713}. 
\cite{MR2599603} proposes to encode the tree by its contour, obtained by using a total order which is increasing when descending along ancestral lines; we then traverse the tree at unit speed respecting the total order and recording the height to obtain a piecewise linear function with jumps such as the one depicted in Figure 
\ref{DiscreteChronologicalTree}(A). 
Conversely, starting from a non-negative piecewise linear function with positive jumps and negative slopes, we can interpret it as the contour of a tree as follows. 
Jumps of the function correspond to individuals  and the size of the jumps are interpreted as lifetimes (which we explore from the top of the jump).  
The bottom of each jump is joined backwards to the first vertical line (or jump) it encounters  (by joining along the dotted lines) which then becomes its parent while the height at which it is joined is the birth-time. 
This produces a notion of genealogy between the jumps. 
This interpretation of a function as giving rise to a tree was introduced and explored in \cite{MR1617047} and interpreted via  last in first out (LIFO) queues with a single server.

The process counting the number of individuals alive as time varies in Model \ref{finiteSplittingTreesModel} is known as (binary, homogeneous) Crump-Mode-Jagers (CMJ) process. 
This process is Markovian only in the case of exponentially distributed (or a.s. infinite) lifetimes. 
On the other hand, the numbers of individuals in each successive generation of the genealogy evolve as a Galton-Watson process.  
However, the real surprise of \cite{MR2599603} is that the contours of splitting trees are Markovian: they are L\'evy processes in the subcritical case and otherwise the splitting tree truncated at a given time has a reflected L\'evy process as its contour. 
For descriptions of the contour of non-binary and non-homogeneous versions of Model \ref{finiteSplittingTreesModel}, we refer to \cite{1506.03192}. 

The chronologies of \cite{MR2599603} are basically discrete, in the sense that their associated genealogy is a discrete (or combinatorial) tree from which the chronological tree is obtained by specifying the birth and death times of individuals. 
In short, we wish to extend these results to real trees as follows. 
\begin{enumerate}
\item\label{NotionOfCRTAim} To generalize the notion of chronological tree based on the formalism of real trees; 
these will be the TOM trees alluded to in the abstract, 
\item\label{CodingOfCRTAim} To show that these TOM trees also give rise to a contour which characterizes the tree,
\item\label{SplittingAim} To introduce the splitting property for measures on TOM trees,
\item\label{ContourAim} To identify the law of contours under measures with the splitting property in terms of reflected L\'evy processes.
\end{enumerate}
\begin{figure}
\begin{center}
\subfloat[][]{\includegraphics[width=.4\textwidth]{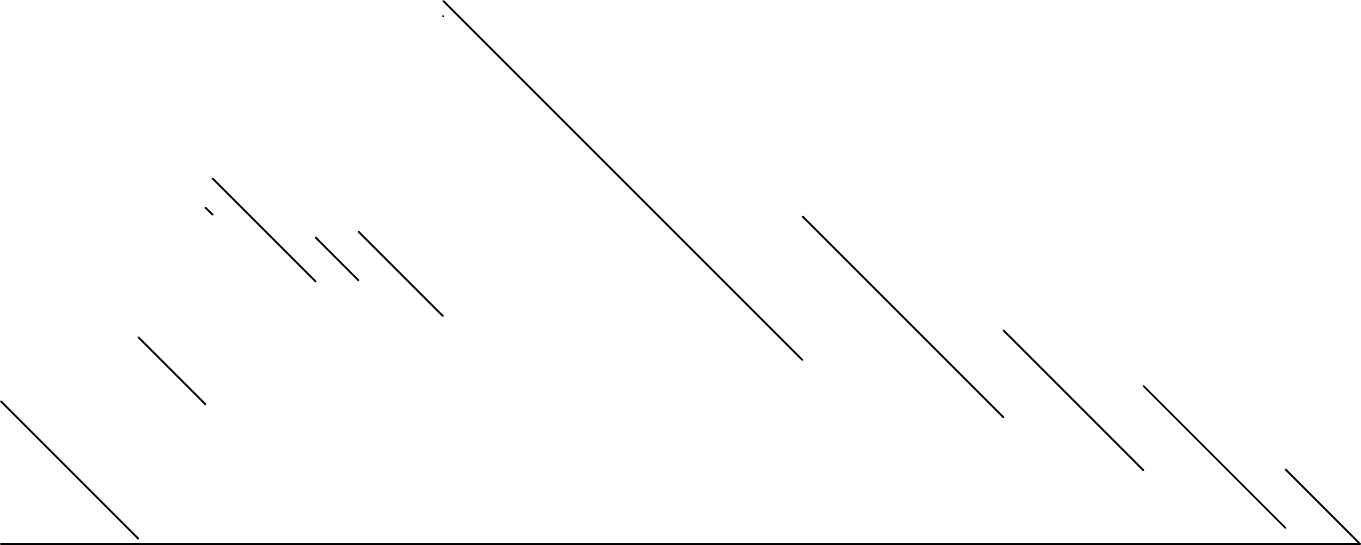}}
\quad
\subfloat[][]{\includegraphics[width=.4\textwidth]{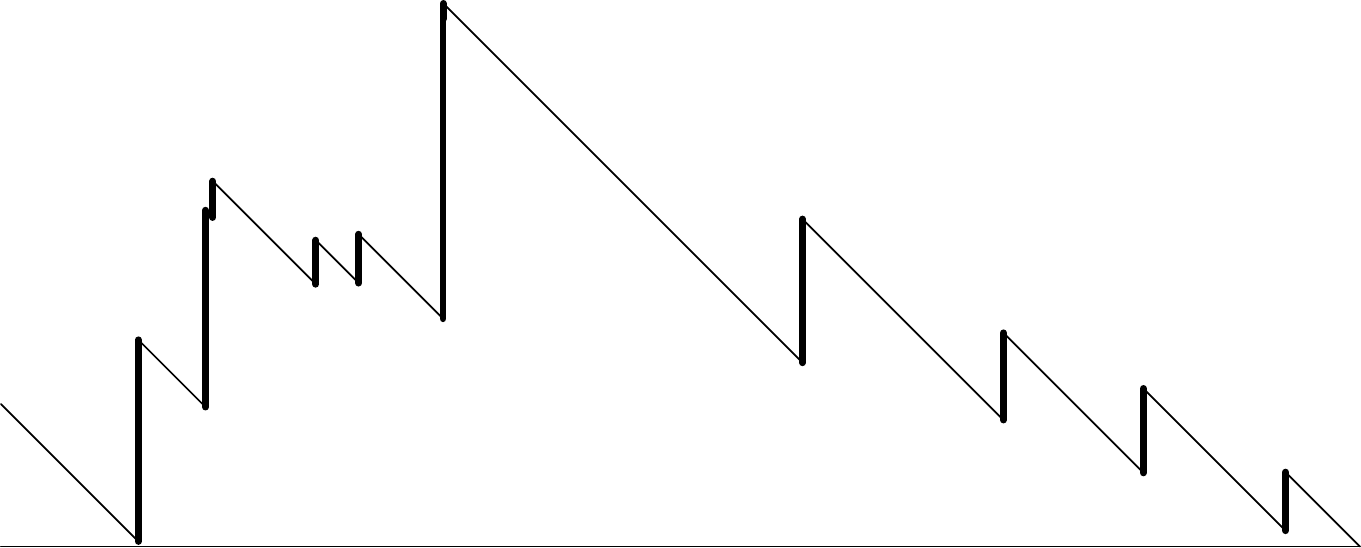}}\\
\subfloat[][]{\includegraphics[width=.4\textwidth]{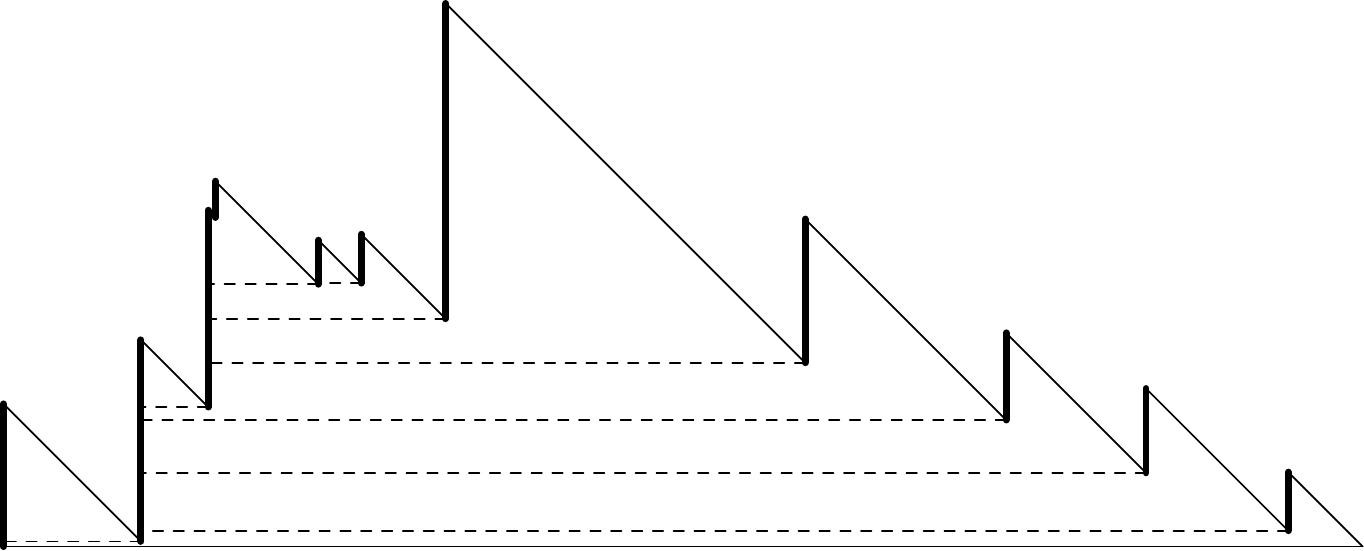}}\quad
\subfloat[][]{\includegraphics[width=.4\textwidth]{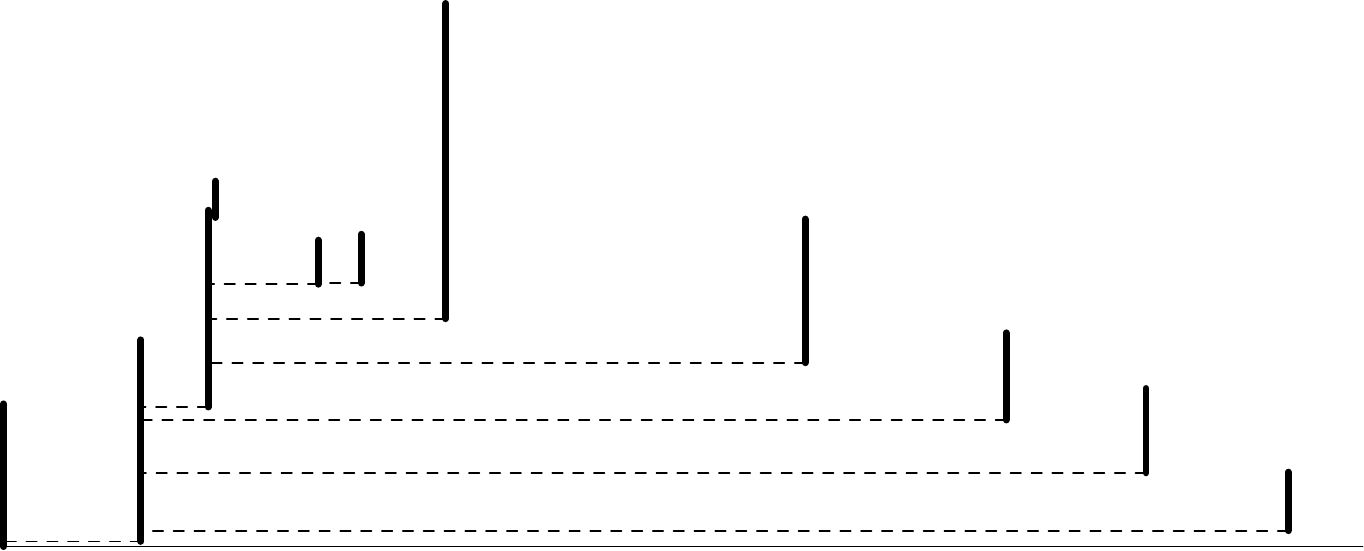}\label{treeModel1}}
\end{center}
\caption{The coding function of a chronological tree with finite length 
where lifetimes are traversed at unit speed: how to recover the tree from the contour. A) Start with a \cadlag\ map with compact support; B) Draw vertical solid lines in the place of jumps; C) Report horizontal dashes lines from each edge bottom left to the rightmost solid point; D) erase diagonal lines.}
\label{DiscreteChronologicalTree}
\end{figure}
Items \ref{NotionOfCRTAim} and \ref{CodingOfCRTAim} are based on modifying the formalism of structured real trees of \cite{Duquesne:fk} to suit our present needs. 
On the other hand, items \ref{SplittingAim} 
 and  \ref{ContourAim}
  are motivated by \cite{MR2599603}, although our arguments differ. 
In forthcoming work, 
we use the above measures with the splitting property to give a construction of supercritical L\'evy trees (which have been constructed as limits of Galton-Watson trees consistent under Bernoulli leaf percolation in \cite{MR2322700} or by relating them to subcritical L'evy trees via Girsanov's theorem in \cite{MR2437534} and \cite{MR2962090}) by applying the height process  of \cite{MR1954248} to the contours constructed in this paper. 
This gives us access to the chronology driving the genealogy encoded in supercritical L\'evy trees and lets us identify the prolific individuals (those with an infinite line of descent) introduced in \cite{MR2455180}. 
Finally, we believe that our construction of supercritical L\'evy trees will provide a snake construction of  supercritical superprocesses (with spatially independent branching mechanisms,  as in \cite{MR1714707} for (sub)critical cases)  and give an interpretation for the backbone decomposition of \cite{MR2794978}.

\subsection{Statement of the results}
The central notion of this work, that of a TOM tree, is based on the metric spaces called real trees.
The definition of real trees mimics the concept of a combinatorial tree defined as a combinatorial graph which is connected and has no cycles. 
\begin{definition}[From \cite{MR1397084} and \cite{MR2221786}]
An \defin{\realtree}\ (or \defin{real tree}) is a metric space $\paren{\tau,d}$ satisfying the following properties:
\begin{description}
\item[Completeness] $\paren{\tau,d}$ is complete.
\item[Uniqueness of geodesics]  For all $\sigma_1,\sigma_2\in\tau$ there exists a unique isometric embedding\begin{linenomath}\begin{esn}
\fun{\phi_{\sigma_1,\sigma_2}}{[0,\imf{d}{\sigma_1,\sigma_2}]}{\tau}
\end{esn}\end{linenomath}such that $\imf{\phi}{0}=\sigma_1$ and $\imf{\phi}{\imf{d}{\sigma_1,\sigma_2}}=\sigma_2$.
\item[Lack of loops] For every injective continuous mapping $\fun{\phi}{[0,1]}{\tau}$ such that $\imf{\phi}{0}=\sigma_1$ and $\imf{\phi}{1}=\sigma_2$, the image of $[0,1]$ under $\phi$ equals the image of $[0,\imf{d}{\sigma_1,\sigma_2}]$ under $\phi_{\sigma_1,\sigma_2}$. 
\end{description}
A triple $\paren{\tau,d,\rho}$ consisting of a real tree $\paren{\tau,d}$ and a distinguished element $\rho\in\tau$ is called a \defin{rooted (real) tree}.
\end{definition}

If $\paren{\tau,d,\rho}$ is a rooted real tree and $\sigma_1,\sigma_2\in\tau$, we define the \defin{closed interval} $[\sigma_1,\sigma_2]$ to be the image of $[0,\imf{d}{\sigma_1,\sigma_2}]$ under $\phi_{\sigma_1,\sigma_2}$; the \defin{open interval}, obtained by removing $\sigma_1$ and $\sigma_2$, is denoted $(\sigma_1,\sigma_2)$. 
We can now define the \defin{genealogical partial order} $\preceq$ by stating that\begin{linenomath}\begin{esn}
\sigma_1\preceq\sigma_2\quad\text{if and only if}\quad \sigma_1\in [\rho,\sigma_2].
\end{esn}\end{linenomath}When $\sigma_1\preceq\sigma_2$ and $\sigma_1\neq\sigma_2$ we write $\sigma_1\prec\sigma_2$. 
Since a tree has no loops, there is a unique element, called the \defin{most recent common ancestor} of $\sigma_1$ and $\sigma_2$ \label{greaterCommonAncestorDefinition}  and denoted\begin{linenomath}\begin{esn}
\sigma_1\wedge \sigma_2, 
\end{esn}\end{linenomath}such that\begin{linenomath}\begin{esn}
[\rho,\sigma_1]\cap[\rho,\sigma_2]=[\rho,\sigma_1\wedge \sigma_2].
\end{esn}\end{linenomath}

The real tree coded by a function has been introduced for continuous functions in \cite{MR2225746} and for c\`agl\`ad (left-continuous with right limits) functions with negative jumps in \cite{Duquesne:fk}. 
We now recall its construction in the setting of \cadlag\ functions, that is right-continuous functions which admits limits from the left, with non-negative jumps. 
Given a \cadlag\ function $\fun{f}{[0,m]}{[0,\infty)}$ with non-negative jumps and such that $\imf{f}{m}=0$ we can define a compact real tree $\paren{\tau_f,d_f,\rho_f}$ as follows. 
Consider the pseudo-distance $d_f$ on $[0,m]$ given by\begin{linenomath}\begin{esn}
\imf{d_f}{t_1,t_2}=\imf{f}{t_1}+\imf{f}{t_2}-2\imf{m_f}{t_1,t_2}\quad\text{where} \quad \imf{m_f}{t_1,t_2}= \inf_{t\in [t_1,t_2]} \imf{f}{t},
\end{esn}\end{linenomath}as well as the corresponding equivalence relationship $\sim_f$ given by\begin{linenomath}\begin{esn}
t_1 \sim_f t_2\text{ if and only if }\imf{d_f}{t_1,t_2}=0.
\end{esn}\end{linenomath}Let $[s]_f$ denote the corresponding equivalence class of $s\in [0,m]$. 
It we equip the quotient space $[0,m]/\sim_f=\set{[t]_f:t\in [0,m]}$ with the induced distance (also denoted $d_f$) and root it at the equivalence class of $m$ (denoted $\rho_f$), we obtain a compact rooted real tree. 
(The proof is similar to the corresponding statement for c\`agl\`ad functions in Lemma 2.1 of \cite{Duquesne:fk}.) 
As an example, note that if $[s_i]_f=\sigma_i$ for $i=1,2$ and $s\in [s_1,s_2]$ is such that $\imf{m_f}{s_1,s_2}=\imf{f}{s}$ 
then $[s]_f=[s_1]_f \wedge [s_2]_f$. 
Because of this, the interval $[[s]_f,\rho_f]$ can be identified with the set of $t\in [s,m]$ such that $\imf{f}{t}=\imf{m_f}{s,t}$. 

We now adopt (and modify) the insight of \cite{Duquesne:fk} which is to notice that the function $f$ endows the real tree $\tau_f$ with additional structure: a total order $\leq$ where two equivalence classes $\sigma_1,\sigma_2 \in\tau_f$ satisfy $\sigma_1\leq \sigma_2$ if and only if $\sup\sigma_1\leq\sup \sigma_2$, and a measure $\mu$ equal to the push-forward of Lebesgue measure under the canonical projection, {\label{ProjectionFromIntervalToRealTreeDefinition} denoted $p_f$,} from $[0, m]$ to $\tau_f$. 
The abstraction of this situation (i.e., not defining the tree from any given $f$) lies at the heart of the notion of TOM trees.

\begin{definition}
A \defin{real tree} $\paren{\tau,d,\rho}$ is called \defin{totally ordered} if there exists a total order $\leq$ on $\tau$ which satisfies
\begin{description}
\item[Or1] $\sigma_1\preceq \sigma_2$ implies $\sigma_2\leq \sigma_1$ and
\item[Or2] $\sigma_1<\sigma_2$ implies $[\sigma_1,\sigma_1\wedge\sigma_2)<\sigma_2$.
\end{description}A totally ordered real tree is called \defin{measured} if there exists a 
measure $\mu$ on the Borel sets of $\tau$ satisfying:
\begin{description}
\item[Mes1] 
$\mu$ is locally finite and for every $\sigma_1<\sigma_2$ we have that
\begin{linenomath}
\begin{esn}
\imf{\mu}{\set{\sigma:\sigma_1\leq \sigma\leq \sigma_2}}>0. 
\end{esn}
\end{linenomath}
\item[Mes2] $\mu$ is diffuse. 
\end{description}A totally ordered measured tree, typically denoted $\tr{c}=\paren{\paren{\tau,d,\rho},\leq,\mu}$, will be referred to as a \defin{TOM tree}. 
We will say that $\tau$ (or $\paren{\tau,d,\rho}$) is the tree part of $\tr{c}$. 
\end{definition}
\label{TOMtreedefinitionpage}
We will be exclusively interested in locally compact TOM trees. 
In this case, the measure $\mu$ is actually finite when the tree is compact and hence $\sigma$-finite otherwise. 
As defined, TOM trees are intimately linked with the structured trees of \cite{Duquesne:fk}. 
The main difference between TOM trees and the structured trees of  \cite{Duquesne:fk} is that the total order $\leq$ is in a sense the opposite of ours, since it is an extension of the genealogical partial order. 
Extending instead to the inverse genealogical partial order is important in order to obtain a coding function which is \cadlag\ and has non-negative jumps, since we will be interested in random TOM trees whose coding function is a Markov process. 
Indeed, to get a structured tree, it suffices to define a new total order $\leq^\text{st}$ so that $s\leq^\text{st} t$ if $t\leq s$. 
Then the tree $\paren{\paren{\tau,d,\rho},\leq^\text{st}, \mu}$ is a structured tree. 
When coding a tree by a \cadlag\ function $f$, it suffices to consider the function $t\mapsto f(m-t)$ to be in the setting of \cite{Duquesne:fk}. 
We make the additional assumption of diffusivity of $\mu$ to obtain what we think is a slightly easier definition of a coding function for TOM trees.

 With our definition, 
not every \cadlag\ function $\fun{f}{[0,m]}{[0,\infty)}$ with non-negative jumps and $\imf{f}{m}=0$ codes a compact TOM tree. 
We have to assume additionally  that the equivalence classes $f$ gives rise to have zero Lebesgue measure, 
so that the pushforward measure is non-atomic. 
One way to ensure this is to impose that the equivalence classes generated by $f$ are at most countable.
This assumption is satisfied with probability 1 if $f$ is the sample path of a L\'evy process with no negative jumps that is not a subordinator (cf. Proposition \ref{levyProcessCodesTreeProposition} in Section \ref{CompactSplittingTreeSection}). 

There is a simpler way to define a total order satisfying \defin{Or} when the underlying tree is binary. 
A real tree $\tau$ is said \defin{binary} when $\tau\setminus\set{\sigma}$ has at most 3 connected components for every $\sigma\in\tau$. 
In that case, $\sigma$ is called a branching point if $\tau\setminus\set{\sigma}$ has 3 connected components, the one containing $\rho$ and two others. It can be easily seen that these three connected components are also trees.  
Suppose that for every branching point $\sigma$ we are given an orientation, which declares, for the two components of $\tau\setminus\set{\sigma}$ which do not contain $\rho$, which one is the `left' subtree rooted at $\sigma$ and which one is the `right' subtree rooted at $\sigma$. Then we can define a total order $\leq$ on $\tau$ as follows. For any $\sigma_1,\sigma_2\in\tau$, $\sigma_1\preceq \sigma_2$ implies $\sigma_2\leq \sigma_1$ (so that $\leq$ satisfies \defin{Or1}). 
Otherwise, note that $\sigma_1\wedge \sigma_2$ is a branching point and we then define\begin{linenomath}\begin{esn}
\sigma_1\leq \sigma_2\quad  \text{if  and only if}\quad 
\sigma_1
\text{ belongs to the left component of }\tau\setminus\set{\sigma_1\wedge \sigma_2}
.
\end{esn}\end{linenomath}Then $\leq$ is a total order on $\tau$ which satisfies \defin{Or}. 

The importance of the notion of TOM trees is that they allow the construction of a contour which codes the tree in the aforementioned sense. 
To formalize this, we will say that two compact TOM trees are \defin{isomorphic} if there is an isometry from one to the other which respects the total order and maps the associated diffuse measures one onto the other. 
\begin{theorem}
\label{CRTCodingTheorem}
Let $\tr{c}$ be a compact TOM tree and let $m=\imf{\mu}{\tau}$. 
There exists a \cadlag\ function $\fun{f_{\tr{c}}}{[0,m]}{[0,\infty)}$ with no negative jumps such that the tree coded by $f_{\tr{c}}$ is isomorphic to $\tr{c}$. 
In particular, if $\tr{c}$ and $\tr{\tilde c}$ are non-isomorphic TOM trees then $f_{\tr{c}}\neq f_{\tr{\tilde c}}$.
\end{theorem}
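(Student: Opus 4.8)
The plan is to build the contour by \emph{traversing $\tau$ at unit $\mu$-speed in the order $\leq$ and recording the height} $\imf{d}{\rho,\cdot}$, and then to recognize the resulting \cadlag\ function as a legitimate code for a tree isomorphic to $\tr{c}$. First I would introduce the order-cumulative-measure map $\fun{\varphi}{\tau}{[0,m]}$, $\imf{\varphi}{\sigma}=\imf{\mu}{\set{\sigma'\in\tau:\sigma'\leq\sigma}}$. Diffuseness (\textbf{Mes2}) together with positivity on order-intervals (\textbf{Mes1}) makes $\varphi$ strictly increasing, and the usual distribution-function argument shows that $\varphi$ pushes $\mu$ forward to Lebesgue measure on $[0,m]$. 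Using completeness of $\paren{\tau,d}$ together with compactness, I would argue that $\paren{\tau,\leq}$ is order-complete enough for the generalized inverse $\fun{p}{[0,m]}{\tau}$, $\imf{p}{t}=\inf\set{\sigma:\imf{\varphi}{\sigma}>t}$ (infimum in $\leq$, with $\imf{p}{m}=\rho$), to be well defined; this $p$ is the exploration map, and I set $\imf{f_{\tr{c}}}{t}=\imf{d}{\rho,\imf{p}{t}}$. Since $\rho$ is the $\leq$-maximum by \textbf{Or1}, $\imf{f_{\tr{c}}}{m}=0$.

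Next I would verify the analytic properties. Along a maximal stretch of $t$ on which $\imf{p}{t}$ moves continuously up a single ancestral line, \textbf{Or1} forces $\imf{p}{t}$ to travel \emph{towards} $\rho$, so $f_{\tr{c}}$ decreases continuously there; this is what produces the left limits. At the end of such a stretch the exploration has descended to a branching point $b=\imf{p}{t}\wedge\imf{p}{t^-}$ and jumps to a $\leq$-larger descendant, so by \textbf{Or2} the jump is upward, i.e.\ $\imf{f_{\tr{c}}}{t}\geq \imf{f_{\tr{c}}}{t^-}$, while right-continuity comes from the choice of the right-continuous inverse. Here compactness of $\tau$ and finiteness of $\mu$ are essential to rule out accumulation pathologies and to guarantee that the left limits exist, so \emph{this regularity step is the main obstacle}. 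Finally, \textbf{Mes2} shows that each fibre of $p$, hence each $\sim_{f_{\tr{c}}}$-class, is $\mu$-null, so the equivalence classes have zero Lebesgue measure and $f_{\tr{c}}$ genuinely codes a compact real tree.

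It remains to identify $\tau_{f_{\tr{c}}}$ with $\tr{c}$. I would show that $p$ descends through the projection $p_{f_{\tr{c}}}$ to a map $\tau_{f_{\tr{c}}}\to\tau$ and check the three structures in turn. For the metric I would prove $\imf{m_{f_{\tr{c}}}}{s,t}=\imf{d}{\rho,\imf{p}{s}\wedge\imf{p}{t}}$ --- exactly the identification highlighted in the example before the theorem, which rests on \textbf{Or2} --- and then read off
\begin{esn}
\imf{d_{f_{\tr{c}}}}{s,t}=\imf{f_{\tr{c}}}{s}+\imf{f_{\tr{c}}}{t}-2\imf{m_{f_{\tr{c}}}}{s,t}=\imf{d}{\imf{p}{s},\imf{p}{t}},
\end{esn}
so $p$ induces an isometry. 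Order preservation is immediate from monotonicity of $\varphi$ and the fact that the order on $\tau_{f_{\tr{c}}}$ is defined through suprema of classes, and measure preservation is exactly $p_*\leb=\mu$, which restates that $\varphi$ pushes $\mu$ to Lebesgue measure. Surjectivity, hence bijectivity, of the induced map follows from compactness of $\tau$ together with the strict monotonicity of $\varphi$, giving the isomorphism.

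The final assertion is then formal: if $f_{\tr{c}}=f_{\tr{\tilde c}}$, then both $\tr{c}$ and $\tr{\tilde c}$ are isomorphic to the common coded tree $\tau_{f_{\tr{c}}}$, hence to one another, which is precisely the contrapositive of the claim.
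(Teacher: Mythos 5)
Your architecture is the same as the paper's: your $\varphi$ is the paper's $\psi\paren{\sigma}=\imf{\mu}{L_\sigma}$, your $p$ is the exploration process obtained as the right-continuous inverse of $\psi$ along its dense image, the contour is $\imf{d}{\rho,\imf{p}{\cdot}}$, and the isomorphism is verified structure by structure exactly as in Appendix~\ref{CRTCodingTheoremProof} (the paper merely replaces your direct computation of $m_{f_{\tr{c}}}$ by the observation that on a rooted real tree the metric is determined by distances to the root). The genuine gap sits precisely in the step you flag as the main obstacle. You derive the left limits and the sign of the jumps from a decomposition of $[0,m]$ into ``maximal stretches on which $p$ moves continuously up a single ancestral line, where $f_{\tr{c}}$ decreases, punctuated by upward jumps.'' No such decomposition exists for a general compact TOM tree: for the tree coded by a Brownian excursion the contour is continuous and nowhere monotone, so there are neither maximal monotone stretches nor jumps, yet left limits must still be produced. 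Local monotonicity of the \emph{height} is not the mechanism.

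The correct mechanism is monotonicity in the \emph{order} $\leq$: $\psi^{-1}$ is $\leq$-increasing on the dense set $D=\imf{\psi}{\tau}$, and the paper proves separately (Lemma \ref{MonotonicSequencesConvergeLemma}) that every $\leq$-monotone sequence in a compact TOM tree converges in $\paren{\tau,d}$ --- this is the precise content of the ``order-completeness'' you appeal to, and it is what yields one-sided limits of $p$ as a $\tau$-valued map, hence of $f_{\tr{c}}$; the density of $D$ (Lemma \ref{DensityOfImageLemma}) is likewise what underwrites your ``distribution-function argument'' for $p_*\leb=\mu$. Non-negativity of jumps then requires a separate argument: assuming $\imf{f}{t}<\imf{f}{t-}$, one shows $\imf{\phi}{t-}\notin[\rho,\imf{\phi}{t}]$ and derives a contradiction in each of the two possible order relations between $\imf{\phi}{t-}$ and $\imf{\phi}{t}$, using the continuity of $\wedge$ and \defin{Or2}. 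Your picture of descending to the branching point $\imf{p}{t}\wedge\imf{p}{t-}$ and jumping to a $\leq$-larger descendant is the right intuition for one of these cases but does not cover trees without isolated branch points. With these two ingredients substituted for the stretch decomposition, the remainder of your plan (constancy of $p$ on $\sim_{f_{\tr{c}}}$-classes, the identity $\imf{m_{f_{\tr{c}}}}{s,t}=\imf{d}{\rho,\imf{p}{s}\wedge\imf{p}{t}}$, and preservation of root, order and measure) goes through as in the paper.
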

The function $f_{\tr{c}}$ is called the \defin{contour process} for reasons that will be clear upon its construction. 
Also, note that the above correspondence gives us a canonical way to explore the tree: at time $t$ we visit the equivalence class $[t]$ of $t$ under $f_{\tr{c}}$. 
The mapping $\fund{\phi}{t}{[t]}$ is called the \defin{exploration process}. 
We can then interpret the measure $\mu$ on the compact TOM tree as a measure of time. 
Indeed, if we traverse the interval $[0,m]$ from left to right at unit speed, the exploration process gives us a way to traverse the tree respecting  the order $\leq$ (except at a countable number of exceptional points) and $\imf{\mu}{V}$ is the time it takes to explore the set $V\subset \tau$. 

Theorem \ref{CRTCodingTheorem} follows from Theorem 1.1 in \cite{Duquesne:fk}, where the functions $f_{\tr{c}}$ are given a characterization (as those satisfying a certain property \defin{Min}). For completeness, we give a proof of Theorem \ref{CRTCodingTheorem} in Appendix \ref{CRTCodingTheoremProof}. 

In order to relate the compact and locally compact cases, we introduce the truncation of a locally compact TOM tree. 
Let $r>0$ and $\tr{c}=\paren{\paren{\tau,d,\rho},\leq,\mu}$ be a locally compact TOM tree. 
\begin{definition}
The \defin{truncation} of $\tr{c}$ at level $r$, is the TOM tree $\tr{c}^r=\paren{\paren{\tau^r,d^r,\rho^r},\leq^r,\mu^r}$  where\begin{linenomath}\begin{esn}
\tau^r=\set{\sigma\in\tau:\imf{d}{\sigma,\rho}\leq r},
\end{esn}\end{linenomath}$d^r$ is the restriction of $d$ to $\tau^r\times \tau^r$, $\rho^r=\rho$, $\leq^r$ is the restriction of $\leq$ to $\tau^r\times \tau^r$ and  $\imf{\mu^r}{A}=\imf{\mu}{A\cap \tau^r}$. 
\end{definition}
It is simple to see that $\paren{\tau^r,d^r,\rho^r}$ is a real tree. 
As noted in \cite{MR2322700}, the closed balls of a locally compact real tree are compact by the Hopf-Rinow theorem (cf. \cite{MR2307192} or \cite{MR1835418}); this is where the completeness assumption in our definition of real trees comes into play since, without it, $[0,1)$ would be a locally compact tree without compact truncations at level $r\geq 1$. 
Hence, the truncation $\tr{c}^r$ at level $r$ of a locally compact TOM tree $\tr{c}$ is a compact TOM tree and can therefore be coded by a function $f^r$. 
The functions $f^r$ are related by a time-change in Proposition \ref{HeightProcessOfTruncatedChronologicalTreeProposition} of Section \ref{LocallyCompactCRTSection}.


Let $\cts_c$ stand for the set of equivalence classes of isomorphic compact TOM trees (cf. Remark 7.2.5 of \cite[p. 251]{MR1835418} for why this class actually constitutes a set, although this is also a consequence of Theorem \ref{CRTCodingTheorem}). 
We define $\fun{X_t}{\cts_c}{[0,\infty)\cup\set{\dagger}}$ as follows, where $\dagger$ is a so-called cemetery state. 
Given (a representative) $\tr{c}=\paren{\paren{\tau,d,\rho},\leq,\mu}\in\cts_c$, let $f$ be the contour process of $\tr{c}$ and\begin{linenomath}\begin{esn}
\imf{X_t}{\tr{c}}=\begin{cases}
\imf{f}{t}&t\leq\imf{\mu}{\tau}\\
\dagger&\text{otherwise}
\end{cases}
\end{esn}\end{linenomath} 
We then endow $\cts_c$ with the \sa\ $\mc{C}=\sag{X_s:s\geq 0}$.

We now turn to the  TOM trees coded by L\'evy processes. 
We consider the canonical process on two functional spaces: the usual Skorohod space $\mathbf{D}$ of \cadlag\ trajectories $\fun{f}{[0,\infty)}{[0,\infty)}$ (as introduced for example in \cite[Ch 19, p. 380]{MR1876169}) and the associated excursion space\begin{linenomath}\begin{esn}
\mathbf{E}=\set{\fun{f}{[0,\infty)}{[0,\infty)\cup\set{\cemetery}}:f\text{ is \cadlag,  }\imf{f}{t}=\dagger \text{ iff }t\ge\zeta\text{ for some }\zeta>0\text{ and }\imf{f}{\zeta-}=0}.
\end{esn}\end{linenomath}Note that the cemetery state $\cemetery$ is isolated  and absorbing and $\zeta=\imf{\zeta}{f}$ is termed the lifetime of the excursion $f$. 

On both functional spaces $\mathbf{D}$ and $\mathbf{E}$ we abuse our previous notation and define the canonical process $X$ by\begin{linenomath}\begin{esn}
\imf{X_t}{f}=\imf{f}{t}
\end{esn}\end{linenomath}as well as the \sa s $\mc{D}$ and $\mc{E}$ on $\mathbf{D}$ and $\mathbf{E}$ respectively as $\sag{X_t:t\geq 0}$. 
Note that in view of the correspondence between functions and trees, if $\tr{c}=\paren{\paren{\tau,d,\rho},\leq,\mu}$ is a compact TOM tree, we can define $\zeta=\imf{\zeta}{\tr{c}}=\imf{\mu}{\tau}$. 
We also introduce the canonical filtration $\paren{\F_t,t\geq 0}$ where\begin{linenomath}\begin{esn}
\F_t=\sag{X_s:s\leq t}.
\end{esn}\end{linenomath}

We now consider spectrally positive (i.e., with no negative jumps) L\'evy processes. 
The reader can consult 
\cite{MR1406564} (especially Chapter VII) for the adequate background. 
Let $\Psi$ be the Laplace exponent of a spectrally positive L\'evy process which is not a subordinator (equivalent to assuming $\imf{\Psi}{\infty}=\infty$). 
This is a convex function written as
\begin{linenomath}\begin{esn}
\imf{\Psi}{\lambda}=-\kappa+\alpha \lambda+\beta\lambda^2+\int_0^\infty \bra{e^{-\lambda x}-1+\lambda x\indi{x\leq 1}}\, \imf{\pi}{dx}
\end{esn}\end{linenomath}where $\kappa,\beta\geq 0$, $\alpha\in\re$ and $\pi$ is the L\'evy measure of $\Psi$, which satisfies $\int_0^\infty 1\wedge x^2\, \imf{\pi}{dx}<\infty$. 
Let $\p_x$ (or $\p^\Psi_x$ when $\Psi$ is not clear from the context) be the law of a spectrally positive L\'evy process with Laplace exponent $\Psi$ which starts at $x$. 
When $\kappa >0$, the $\Psi$-L\'evy process has the same law as the L\'evy process with Laplace exponent $\psi +\kappa$ killed after an independent exponential time with parameter $\kappa$; 
we will assume that a killed L\'evy process jumps to the cemetery state $\dagger$ upon killing. 
We say that $X$ is (sub)critical if $X$ does not drift or jump to $+\infty$ (or equivalently $\imf{\Psi'}{0}\geq 0$ and $\kappa=0$) and otherwise it will be termed supercritical. 
Let $b$ be the largest root of the convex function $\Psi$, so that $b>0$ if and only if $\Psi$ is supercritical. 
If $\underline X$ stands for the cumulative minimum process given by\begin{linenomath}\begin{esn}
\underline X_t=\inf_{s\leq t} X_s,
\end{esn}\end{linenomath}then the reflected process $X-\underline X$ is a strong Markov process (cf. \cite[Ch. VI]{MR1406564}) and $\underline X$ is a version of its local time at zero. 
Given a supercritical exponent $\Psi$ 
we can define an associated subcritical exponent $\imf{\Psi^\#}{\lambda}=\imf{\Psi}{\lambda+b}$ 
which corresponds to conditioning a $\Psi$-L\'evy process on reaching arbitrarily low levels, 
in the sense that its law equals\begin{linenomath}\begin{esn}
\lim_{a\to-\infty}\imf{\p_0}{\cond{\cdot}{\underline X_\infty<a}}
\end{esn}\end{linenomath}on every $\F_s$. 
(Cf. Lemma 7 in \cite[Ch. VII]{MR1406564} and Lemme 1 in \cite{MR1141246}.) 
We let $\nu$ stand for the excursion measure of $X-\underline X$ away from zero;
the measure $\nu$ may charge excursions of infinite length. 
Indeed, if $\p^\rightarrow$ denotes the law of the L\'evy process after it approaches its overall minimum from the left for the last time  
then\begin{linenomath}\begin{esn}
\nu=\nu^\#+b\p^\rightarrow,
\end{esn}\end{linenomath}where $\nu^\#$ is the excursion measure associated to $\Psi^\#$ which is concentrated on excursions with finite length. 



Also, let us define the law $\q_x$ on $E$ as the law of the L\'evy process killed (and sent to $\dagger$) upon reaching zero under $\p_x$. 
Then, under $\nu$, the canonical process is strongly Markovian with the same semigroup as $\q_x$. 

Define\begin{linenomath}\begin{esn}
\overline X_t^r= \paren{\max_{s\leq t}X_s-r}^+
\end{esn}\end{linenomath}and let $X^r$ be the process obtained by stopping $X-\overline X^r$ when it reaches zero. 
(In the killed case, $X-\overline X^r$ might be killed before $X-\overline X^r$ reaches zero; in this case, we simply concatenate independent copies started at $r$ until a copy reaches zero). 
We let $\nu^r $ be the push-forward of $\nu$ by  $X^r$. 
We also consider the law $\eta^r$ of the tree coded by the canonical process under $\nu^r$. Note that the measures $\eta^r$ are concentrated on binary trees.
Our next result shows that we can define a measure $\eta^\Psi$ whose image under truncation at level $r$ equals $\eta^r$ and that the measure $\eta^\Psi$ is characterized by a self-similarity property termed the splitting property.

To define such a property, suppose that $\tr{c}=\paren{\paren{\tau,d,\rho},\leq,\mu}$ is a TOM tree. 
Truncate $\tr{c}$ at height $r$ to obtain $\tr{c}^r$ and suppose that $\tr{c}^r$ has a total measure greater than $t$. 
Let $\phi$ stand for the exploration process of $\tr{c}^r$. 
Then, on the interval $[\rho,\imf{\phi}{t})$ we can define left subtrees and right subtrees. 
The right subtrees are characterized by the existence of $\sigma\in [\rho,\imf{\phi}{t})$ such that\begin{linenomath}\begin{esn}
\set{\tilde\sigma: [\imf{\phi}{t},\sigma)<\tilde\sigma\leq\sigma}\neq\set{\sigma}.
\end{esn}\end{linenomath}The right subtree at $\sigma$ is then equal to\begin{linenomath}\begin{esn}
R_{t,\sigma}=
\set{\tilde\sigma: [\imf{\phi}{t},\sigma)<\tilde\sigma\leq\sigma}
\end{esn}\end{linenomath}and we can collect all these right subtrees in the measure\begin{linenomath}\begin{esn}
\Xi_t^r=\sum_{R_{t,\sigma}\neq\set{\sigma}}\delta_{\paren{\imf{d}{\rho,\sigma},R_{t,\sigma}}}.
\end{esn}\end{linenomath}
In the following definition, recall that $X(\tr{c})$ is the contour associated to the tree $\tr{c}$.
\begin{definition}
A measure $\kappa$ on locally compact TOM trees satisfies the \defin{splitting property} if for any $r>0$, on defining $\kappa^r$ as the image of $\kappa$ under truncation at height $r$, 
\begin{enumerate}
\item\label{FinitudeInSplittingPropertyDefinition} for any $t>0$, $\imf{\kappa^r}{\zeta>t}<\infty$ and
\item\label{CompactSplittingPropertyDefinition} on the set $\set{\zeta>t}$ and conditionally on $X_t=x$, $\Xi_t^r$ is a Poisson random measure on $[0,x]\times \cts_c$ with intensity $\imf{\leb}{ds}\otimes \imf{\kappa^{r-s}}{d\tr{c}}$: 
if 
$\fun{h}{[0,\infty)\cup\set{\dagger}}{[0,\infty)}$ is measurable and vanishes at $\dagger$ 
and 
$\fun{g}{[0,\infty)\times \cts_c}{[0,\infty)}$ is measurable then\begin{linenomath}\begin{esn}
\imf{\kappa^r}{\imf{h}{X_t}e^{-\Xi_t^r g}}=\imf{\kappa^r}{\imf{h}{X_t}e^{-\int_{[0,X_t]\times \cts_c} \imf{g}{s,\tr{c}}\,\imf{\kappa^{r-s}}{d\tr{c}}\,ds}}.
\end{esn}\end{linenomath}
\end{enumerate}
\end{definition}
Note that no assumption is made concerning conditional independence of $\Xi_t^r$ and $\sag{X_s,s\leq t}$ given $X_t$. 
\begin{remark}
As consequences of the definition, we will see that 
when $\kappa$ is concentrated on compact TOM trees then 
the splitting property admits the non-truncated version
\begin{linenomath}\begin{esn}
\label{compactVersionOfSplittingProperty}
\imf{\kappa}{\imf{h}{X_t}e^{-\Xi_t^\infty g}}=\imf{\kappa}{\imf{h}{X_t}e^{-\int_{[0,X_t]\times \cts_c} \imf{g}{s,\tr{c}}\,\imf{\kappa}{d\tr{c}}\,ds}}
\end{esn}\end{linenomath}and implies the integrability properties
\begin{linenomath}\begin{esn}\imf{\kappa}{\zeta>t}<\infty
\quad\text{and}\quad
\int_0^\infty 1\wedge \imf{\zeta}{\tr{c}}\, \imf{\kappa}{d\tr{c}}<\infty.
\end{esn}\end{linenomath}%
See the proof of the forthcoming Theorem \ref{MeasuresWithSplittingPropertyCompactCaseTheorem}. 
\end{remark}

It turns out that a class of measures on binary TOM trees with the splitting property are in correspondence with L\'evy processes, under an additional assumption of constant sojourn,  as the next result shows. 

The splitting trees of Model \ref{finiteSplittingTreesModel} previously introduced have the property that they treat elements of the tree in an equal manner, as in the following definition.
\begin{definition}
A  TOM tree $\tr{c}=\paren{\paren{\tau,d,\rho},\leq,\mu}$ has \defin{sojourn $\boldsymbol{a} \geq 0$} if $\imf{\mu}{[\rho,\sigma]}=a \,\imf{d}{\rho,\sigma}$ for every $\sigma\in\tau$.
\end{definition}
In other words, a TOM tree has sojourn $a$ if the measure $\mu$ equals $a$ times Lebesgue measure (on the tree). 
It is interesting that the sojourn $a$ can be $0$ in the above definition even if $\mu$ is a non-trivial measure. 
This happens in particular for the trees coded by Brownian excursions since Brownian motion spends zero time (with respect to Lebesgue measure) at its cumulative minimum process. 

Recall the measures $\eta^r$ on binary trees defined from the (reflected) L\'evy process with Laplace exponent $\Psi$.
\begin{theorem}
\label{MeasuresWithSplittingPropertyCompactCaseTheorem}
%
There exists a unique measure $\eta^\Psi$ on locally compact TOM trees whose truncation at level $r$ equals $\eta^r$. 
The measure $\eta^\Psi$ satisfies the splitting property, is concentrated on binary trees, 
has constant sojourn\begin{linenomath}\begin{esn}
a=\lim_{\lambda\to\infty}\frac{\lambda}{\imf{\Psi}{\lambda}},
\end{esn}\end{linenomath}and assigns finite measure to non-compact TOM trees. 
When $\Psi$ is (sub)critical, $\eta^\Psi$ is the push-forward of $\nu$ under the  mapping taking functions to trees.  $\eta^\Psi$ charges non-compact trees if and only if $\Psi$ is supercritical. 

Conversely, if a non-zero measure $\kappa$ on locally compact TOM trees has the splitting property, is concentrated on binary trees and there exists $a\geq 0$ such that under $\kappa$ the tree has sojourn $a$ almost everywhere, then there exists a spectrally positive L\'evy process with Laplace exponent $\Psi$ such that $\kappa=\eta^\Psi$. 
\end{theorem}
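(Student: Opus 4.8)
The plan is to treat the two halves separately: first to build $\eta^\Psi$ out of the reflected-and-stopped $\Psi$-L\'evy process and verify the stated properties, then to run the argument backwards and show that the splitting property, binarity and constant sojourn force the contour to be exactly such a process. For the construction, I would first check that the family $(\eta^r)_{r>0}$ is \emph{consistent under truncation}, i.e.\ that truncating $\eta^r$ at a lower level $r'<r$ returns $\eta^{r'}$. On the contour side this is precisely the content of Proposition \ref{HeightProcessOfTruncatedChronologicalTreeProposition}: the contours of successive truncations are related by a time change, and stopping $X-\overline X^r$ at zero is compatible with stopping $X-\overline X^{r'}$ at zero. Granting consistency, $\eta^\Psi$ is obtained as the projective limit on locally compact TOM trees, and uniqueness of a measure with prescribed truncations is immediate since a locally compact tree is the increasing union of its truncations and $\mc{C}$ is generated by the contour.

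The splitting property for $\eta^\Psi$ is the probabilistic heart of this direction and should be read off from It\^o's excursion theory for $X-\underline X$. Under $\nu$ the right subtrees hanging on the ancestral line of the point explored at time $t$ are exactly the excursions of the reflected process above its \emph{future} infimum; by the strong Markov property (cf.\ \cite[Ch.\ VI]{MR1406564}) together with the $\q_x$-description of $\nu$, these excursions form a Poisson point process, the descent of the infimum from height $X_t$ down to $0$ supplies the $\imf{\leb}{ds}$ factor, and the bookkeeping of the truncation level produces the $\kappa^{r-s}$ intensity. The constant sojourn $a=\lim_{\lambda\to\infty}\lambda/\imf{\Psi}{\lambda}$ is then computed from the fact that $-\underline X$ is a local time whose inverse is a subordinator of drift $\lim_{\lambda\to\infty}\lambda/\imf{\Psi}{\lambda}$, this drift being precisely the $\mu$-time spent descending a unit of spine (so that $a=0$ in the infinite-variation/Gaussian regime). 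Finiteness on non-compact trees, and the assertion that $\eta^\Psi$ charges non-compact trees precisely when $\Psi$ is supercritical, follow from the decomposition $\nu=\nu^\#+b\,\p^\rightarrow$: non-compact trees are the image of the infinite-lifetime excursions carried by $b\,\p^\rightarrow$, of total mass $b$, and $b>0$ exactly when $\Psi$ is supercritical, while in the (sub)critical case $b=0$ and $\eta^\Psi$ is simply the pushforward of $\nu=\nu^\#$.

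For the converse I would reverse this reasoning. Fix $r$ and push $\kappa^r$ forward under the contour map of Theorem \ref{CRTCodingTheorem} to a measure on excursion-type \cadlag\ functions with non-negative jumps; non-negativity is built into the coding and already forces spectral positivity. The splitting property says that, after exploration time $t$, the future of the contour is the reflection of a process above a descending infimum whose excursions form a Poisson point process with intensity $\imf{\leb}{ds}$ tensored with the contour-pushforward of $\kappa^{r-s}$. Because each excursion subtree is itself governed by the same family $\kappa^{\bullet}$, this Poissonian description is self-reproducing across levels, $\kappa^{r}\leftrightarrow\kappa^{r-s}$, which furnishes exactly the spatial homogeneity required: the excursion measure above the current infimum does not depend on the absolute height once the truncation level is accounted for. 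I would then invoke the converse half of excursion theory --- a reflected strong Markov process on $[0,\infty)$ with no negative jumps whose excursions above a local-time-indexed infimum form a homogeneous Poisson point process is a reflected spectrally positive L\'evy process (cf.\ \cite{MR1406564}) --- to conclude that the contour is $X^r$ for some exponent $\Psi$. The constant sojourn $a$ fixes the drift of the inverse local time (hence the $\lambda/\imf{\Psi}{\lambda}$ normalization, including the degenerate case $a=0$), the subtree intensity fixes the L\'evy measure $\pi$ and any Gaussian part, and matching at every level $r$ together with the uniqueness from the forward direction yields $\kappa=\eta^\Psi$.

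The main obstacle is this last implication, passing from a \emph{fixed}-$t$ Poissonian statement to a genuine L\'evy process. The splitting property is asserted only for each deterministic exploration time $t$ and conditionally on $X_t$, with no a priori Markov or independent-increments structure --- indeed the remark following the definition explicitly disclaims conditional independence of $\Xi_t^r$ and $\sag{X_s:s\le t}$ given $X_t$ --- so upgrading it to the full strong Markov property and to stationary independent increments of the contour is where the real work lies. I expect to exploit the recursive identity between $\kappa^{r}$ and $\kappa^{r-s}$ to propagate the Poisson structure along the exploration and to pin the excursion measure down as a L\'evy excursion measure; but making the step from ``excursions form a Poisson point process'' to ``the coded function is Markov with stationary independent increments'' rigorous --- while simultaneously handling the $a=0$ regime, where the infimum is not a classical drift-carrying local time, and the supercritical regime, where infinite excursions of total mass $b$ appear --- is the delicate point.
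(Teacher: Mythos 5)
Your forward direction follows the paper's route (consistency under truncation via Proposition \ref{HeightProcessOfTruncatedChronologicalTreeProposition}, projective limit, splitting from the $\q_x$-description of the post-$t$ path under $\nu$, sojourn from the drift of the inverse local time, and the decomposition $\nu=\nu^\#+b\,\p^\rightarrow$ for the mass on non-compact trees), and that part is sound. The converse, however, has a genuine gap, and it is exactly the one you flag yourself: you propose to push $\kappa^r$ forward under the contour map and then invoke a ``converse half of excursion theory'' for a reflected strong Markov process --- but the contour under $\kappa$ is not known to be Markov, and the splitting property is only a statement about the single time $t$ conditionally on $X_t$, with conditional independence of $\Xi_t^r$ and the past explicitly not assumed. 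Invoking It\^o's theory for the contour of $\kappa$ is therefore circular: the Markov property is the conclusion, not a hypothesis you can use. Saying that you ``expect to exploit the recursive identity between $\kappa^r$ and $\kappa^{r-s}$'' does not close this; the proposal ends precisely where the proof has to begin.

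The missing idea is to reverse the logical order. Rather than proving that the contour of $\kappa$ is a reflected L\'evy process, one \emph{builds} a candidate L\'evy process by It\^o synthesis: let $\nu$ be the push-forward of $\kappa$ by the contour map, take a Poisson random measure $\Xi=\sum_i\delta_{(x_i,f_i)}$ on $[0,\infty)\times\mathbf{E}$ with intensity $\leb\otimes\nu$ (this requires first extracting the integrability conditions $\imf{\kappa}{\zeta>\eps}<\infty$ and $\int 1\wedge\zeta\,d\kappa<\infty$ from the splitting property, a step your proposal omits), set $S_l=al+\sum_{x_i\le l}\imf{\zeta}{f_i}$, $\Lambda=S^{-1}$, and concatenate the $f_i$ over the jump intervals of $S$ minus the drift $\Lambda$ to get a process $Y$. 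Independence across levels of local time is now automatic from the Poisson random measure, so the only thing the splitting property must deliver is the identification of conditional laws \emph{inside} an excursion: it yields $\imf{\nu}{\imf{h}{X_s}F\circ\theta_s}=\imf{\nu}{\imf{h}{X_s}\imf{\tilde\q_{X_s}}{F}}$, from which the Markov property of $\tilde\q_x$ and then of $\nu$ follows, and the decomposition of the path of $Y$ at the last and first zeros $g_t,d_t$ of $Y-\underline Y$ around $t$ then gives stationary independent increments. Since the excursion measure of $Y$ above its infimum is $\nu$ by construction, $\kappa=\eta^\Psi$ for the resulting exponent $\Psi$. Without this synthesis step (and its $r$-truncated analogue followed by the limit $r\to\infty$ in the locally compact case), the converse is not proved.
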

The assumption that the measure $\kappa$ has sojourn $a$ is necessary to be able to code the tree by a L\'evy process as the next example shows. 
Consider Model \ref{finiteSplittingTreesModel}. 
Lambert shows in Theorem 4.1 and Remark 2 of \cite[p 373]{MR2599603} that if $b$ times the expected lifetime is less than or equal to $1$ then the resulting real tree is compact with probability one. 
In this setting, one can use Lebesgue measure on each of the (finite number of) segments comprising the tree, thus obtaining a tree of sojourn $1$, and the contour process of the compact TOM tree is a compound Poisson process minus a drift of slope $1$ until it reaches zero. 
However, one might use an additional speed distribution $S$ on $(0,\infty)$ and associate to individuals iid speeds with distribution $S$ at which their lifetimes will be traversed (or equivalently, defining the multiple of Lebesgue measure which will be used on each interval in the tree representing the lifetime). 
The compact TOM tree we obtain has the splitting property but does not have sojourn $a$ and is not coded by a L\'evy process (since there is no unique drift). 
However, there is a (random) time-change which (by giving all individuals the same non-random speed) takes the coding process into a L\'evy process and this begs the question on whether there is a characterization of binary trees with the splitting property as time-changes of L\'evy processes. 
This question is left open. 
Another question that is not addressed is on characterizing splitting trees which are not binary.

\subsection{Organization}
The paper is organized as follows. 
In Section \ref{CompactSplittingTreeSection}, we study $\sigma$-finite measures on compact TOM trees and how they are related to L\'evy processes through a proof of Theorem \ref{MeasuresWithSplittingPropertyCompactCaseTheorem} in this case. 
In Section \ref{LocallyCompactCRTSection}, we study the truncation operator on compact real trees and how locally compact TOM trees can be therefore constructed as direct limits of consistent families of compact TOM trees. 
We then give in Section \ref{ReflectedProcessSection} 
some constructions of the reflected L\'evy processes which allow us, in Section \ref{locallyCompactSplittingTreeSection} to construct $\sigma$-finite measures on locally compact TOM trees which have the splitting property and to give the proof of Theorem \ref{MeasuresWithSplittingPropertyCompactCaseTheorem} in full generality. 
Finally, in 
Appendix \ref{CRTCodingTheoremProof}%
, we handle the deterministic aspects of the space of TOM trees culminating in a proof of 
Theorem \ref{CRTCodingTheorem}. 



\section{Measures on compact TOM trees with the splitting property}
\label{CompactSplittingTreeSection}
In this section, we provide the characterization of measures on compact TOM trees with the splitting property stated in Theorem \ref{MeasuresWithSplittingPropertyCompactCaseTheorem}. 
We first show that the canonical process codes a compact real tree under the excursion measure of a (sub)critical L\'evy process. 
\begin{proposition}
\label{levyProcessCodesTreeProposition}
Let $X$ be a (sub)critical spectrally positive L\'evy process with Laplace exponent $\Psi$. 
Let $\nu$ be the intensity measure of excursions of $X$ above its cumulative minimum. 
Then, $\nu$-almost surely, the equivalence classes\begin{linenomath}\begin{esn}
[s]_X=\set{t\in [0,\zeta): X_t=X_s=\underline X_{[s,t]}}
\end{esn}\end{linenomath}have at most 
three 
elements for all $s\in [0,\zeta)$. 
\end{proposition}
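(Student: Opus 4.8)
The plan is to reduce the statement to a property of the Lévy process under $\p_x$ and then to a statement about its continuous local minima. First I would pass from $\nu$ to $\p_0$. Since $\underline X$ is a local time for the reflected process $X-\underline X$ and, by Itô's excursion theory, the excursions of $X-\underline X$ away from $0$ form a Poisson point process with intensity $\nu$, any event of positive $\nu$-measure is realized by some excursion with positive $\p_0$-probability. Because $\underline X$ is constant on each excursion interval $[g,d]$, the infimum appearing in $[s]_X$ computed inside an excursion coincides with the corresponding infimum of $X$ itself. Hence, if $\nu$ gave positive mass to the set of excursions possessing an equivalence class with at least four elements, then with positive $\p_0$-probability the path of $X$ would carry four times $t_1<t_2<t_3<t_4$ with $X_{t_1}=\cdots=X_{t_4}=:x$ and $X\ge x$ on $[t_1,t_4]$. (Ruling out four elements also rules out infinite classes, hence yields ``at most three''.) So it suffices to show that, $\p_0$-almost surely, no such quadruple exists. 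A preliminary remark is that a spectrally positive Lévy process that is not a subordinator is a.s. not constant on any interval, so that in a full class the path is strictly above $x$ on each open gap $(t_i,t_{i+1})$ and the $t_i$ are isolated.

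Next, the structural step uses the absence of negative jumps. Fix such a quadruple forming (a portion of) a full class. For an interior index $i\in\{2,3\}$ the path satisfies $X>x$ on the punctured neighbourhood $(t_{i-1},t_i)\cup(t_i,t_{i+1})$, so $X_{t_i-}\ge x$; an inequality $X_{t_i-}>x$ would force a negative jump at $t_i$, which is impossible. Therefore $X_{t_i-}=X_{t_i}=x$ and the path leaves $x$ upward on both sides: each interior $t_i$ is a strict continuous two-sided local minimum of $X$ at the value $x$. In particular $t_2$ and $t_3$ are two distinct strict local minima sharing the same value.

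The main lemma, and the crux of the proof, is then that $\p_0$-a.s. $X$ has no two distinct continuous two-sided local minima with a common value. I would prove this through a countable family of rational witnesses. Given two such minima $t_2<t_3$ at value $x$, strictness lets me pick rationals $a_1<t_2<b_1<a_2<t_3<b_2$ with all inner points lying where $X>x$, chosen so that $\inf_{[a_1,b_1]}X=\inf_{[a_2,b_2]}X=x$, each infimum attained only at the interior minimum. It therefore suffices to show that for any fixed disjoint rational intervals the probability that $\inf_{[a_1,b_1]}X=\inf_{[a_2,b_2]}X$, with the latter infimum attained at an interior point, is zero. Conditioning on $\F_{a_2}$ and applying the Markov property, $\inf_{[a_2,b_2]}X=X_{a_2}+I$ with $I=\inf_{r\le b_2-a_2}(X_{a_2+r}-X_{a_2})$ independent of $\F_{a_2}$; on the event considered $I$ equals the $\F_{a_2}$-measurable value $\inf_{[a_1,b_1]}X-X_{a_2}<0$, which has conditional probability zero provided the law of $I$, restricted to the event that the infimum is attained in the open interval, carries no atom. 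A countable union over rational quadruples then gives the claim, and the interior minima $t_2,t_3$ contradict it, finishing the proof.

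The step I expect to be the main obstacle is this non-atomicity, together with the clean separation of cases. When $\Psi$ is of finite variation the process can only increase by jumps, so a continuous two-sided local minimum cannot occur at all; the configuration produced in the structural step is then already impossible, and the atom that $I$ does carry in this case (the deterministic value coming from a jump-free stretch) is irrelevant because it is never attained at an interior continuous minimum. When $\Psi$ is of infinite variation, the process creeps downward and the law of its infimum over a time interval is diffuse, so the conditional probability above genuinely vanishes. Making the ``attained at an interior point'' restriction interact correctly with the non-atomicity—so that the harmless finite-variation atom is discarded—is the delicate point; by contrast, the excursion-theoretic reduction and the no-negative-jumps structural step are comparatively routine.
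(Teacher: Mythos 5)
Your proof follows essentially the same route as the paper's. The paper's argument is exactly the reduction you arrive at: four points $t_1<t_2<t_3<t_4$ in one class force, for rationals $u_i\in(t_i,t_{i+1})$, the identity $\underline X_{[u_1,u_2]}=\underline X_{[u_2,u_3]}$, and this event is killed by the Markov property together with non-atomicity of the law of the running infimum over a fixed time window. Two remarks on where your write-up diverges. First, the detour through \emph{strict} continuous two-sided local minima is both unjustified and unnecessary. Non-constancy of the path does not imply that the class elements are isolated or that $X>x$ on the open gaps $(t_i,t_{i+1})$ (a class could a priori accumulate), so the strictness you invoke when choosing the rational witnesses is not established. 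But you do not need it: since $t_2\in(a_1,b_1)$ and $t_3\in(a_2,b_2)$ lie in the class, both infima equal $x$ and are attained at an interior point, which is all your final computation uses. Dropping strictness removes the gap and lands you exactly on the paper's argument.

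Second, on the non-atomicity input: you are right that this is the delicate point, and in fact you are more careful here than the paper, which simply asserts that regularity of $0$ for $(-\infty,0)$ makes the law of $\underline X_{u_2-u_1}$ non-atomic — a claim that fails as stated for a compound Poisson process minus a drift $c$, where $\underline X_t$ has an atom at $-ct$. Your restriction to the event that the infimum is attained in the open interval is the correct repair (that atom lives on the event that the infimum is attained at the right endpoint). However, your own justification of the repair is shaky: the claim that a finite-variation spectrally positive process admits no continuous two-sided local minima is asserted without proof and is not clear for infinite-activity finite-variation processes; more to the point, the relevant dichotomy is not finite versus infinite variation but compound-Poisson-plus-drift versus everything else (only in the former can the law of $X_t$, hence plausibly of $\underline X_t$, carry atoms). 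What you actually need, and should either prove or cite precisely, is: for a spectrally positive L\'evy process that is not a subordinator, the law of $\inf_{s\le T}X_s$ restricted to the event that the infimum is attained at some time in $(0,T)$ has no atoms. With that lemma in hand (and the strictness detour removed), your proof is complete and coincides in substance with the paper's.
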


\begin{proof}
Under $\nu$, we have $X_t>0$ for all $t\in (0,\zeta)$. 
Hence $[0]_X$ does not have more than 3 elements. 

Suppose there exist $0<t_1<\cdots<t_4$ such that $t_i\in [t_1]_X$. 
Chose $u_i\in (t_i,t_{i+1})\cap\ra$ for $i=1,2,3$ and note that $\underline X_{[u_1,u_2]}=\underline X_{[u_2,u_3]}$. 
Hence
\begin{linenomath}\begin{esn}
\set{\exists t,  \#[t]_X\geq 4}\subset \bigcap_{\eps\in\ra_+}\bigcup_{\eps<u_1<u_2<u_3\in \ra} \set{\underline X_{[u_1,u_2]}=\underline X_{[u_2,u_3]}}. 
\end{esn}\end{linenomath}We now show that the right-hand set has $\nu$-measure zero. 
By countable subadditivity, it suffices to show
\begin{linenomath}\begin{esn}
\imf{\nu}{\underline X_{[u_1,u_2]}=\underline X_{[u_2,u_3]}, u_3<\zeta}=0.
\end{esn}\end{linenomath}for $\eps<u_1<\cdots<u_3$. However, using the Markov property at $u_1$ under $\nu$, we see that\begin{linenomath}\begin{esn}
\imf{\nu}{\underline X_{[u_1,u_2]}=\underline X_{[u_2,u_3]}, u_3<\zeta}
=\imf{\nu}{\indi{\eps<\zeta}\imf{\q_{X_\eps}}{\underline X_{[u_1-\eps,u_2-\eps]}=\underline X_{[u_2-\eps,u_3-\eps]}}}. 
\end{esn}\end{linenomath}Hence, again by the Markov property, it suffices to show that $\imf{\p_x}{\underline X_{u_1}=\underline X_{[u_1,u_2]}}=0$. 
However,
\begin{linenomath}\begin{esn}
\imf{\p_x}{\underline X_{u_1}=\underline X_{[u_1,u_2]}}=\imf{\se_x}{\imf{h}{X_{u_1},\underline X_{u_1}}}
\quad\text{where}\quad 
\imf{h}{y,z}=\imf{\p_y}{\underline X_{u_2-u_1}=z}. 
\end{esn}\end{linenomath}However, since $X$ is spectrally positive, then $0$ is regular for $(-\infty,0)$ (cf. Thm. 1 \cite[Ch. VII]{MR1406564}) and so Lemma 1 in \cite{Pecherskii:1969aa} (or Theorem 1 in both \cite{MR3098676} and \cite{MR2978134}) tell us that the law of $\underline X_{u_2-u_1}$ is non-atomic, which implies that $h$ is actually zero. 
%
\end{proof}
%
We now do an analysis on TOM trees to reduce our proof of the splitting property of $\eta^\Psi$ to a simple property of the excursion measures and to motivate the construction of the L\'evy process out of a measure on trees with the splitting property.

Let $\fun{f}{[0,m]}{\re}$ be a \cadlag\ function which codes a TOM tree $\tr{c}$ and consider $s\in (0,m)$ and the class of $s$ under $\sim_f$ denoted $\sigma=[s]_{f}$. 
We first identify the subtrees attached to the right of $(\sigma,\rho]$ with the excursions of $f$ above its cumulative minimum on $[s^*,m]$, where\begin{linenomath}\begin{esn}
s^*=\inf\set{t\geq s: \imf{f}{t}<\imf{f}{s}}.
\end{esn}\end{linenomath}We have already identified $R_\sigma$ with the image of $[s^*,m]$ by the exploration process in Subsection \ref{ElementaryOperationsOnCCRTsSection}.
 Also, when discussing the tree coded by a function in Section \ref{IntroductionSection}, we identified the interval $[\sigma,\rho]$ with the image of\begin{linenomath}\begin{esn}
\set{t\geq s^*:\imf{f}{t}=\imf{m_f}{s^*,t}}.
\end{esn}\end{linenomath}Hence, $R_\sigma\setminus [\sigma,\rho]$ can be identified with the excursions of $f$ above its cumulative minimum on the interval $[s^*,m]$. %
To be more precise, say that `
$\tilde\sigma\in [\rho,\sigma)$ is a branching point of $R_\sigma$ if $R_\sigma\setminus\set{\tilde\sigma}$ has at least three components. 
One of these components contains $\sigma$ and a different one contains $\rho$. 
If we join all the other components to $\tilde\sigma$ (of which there is only one in the case of binary trees), we obtain a compact TOM tree which can be termed the tree at $\tilde\sigma$ to the right of $[\sigma,\rho]$. 
Let us call it $\tr{c}_{\tilde\sigma,\sigma}$. 
The set of $\tilde\sigma\in [\sigma,\rho]$ for which $\tr{c}_{\tilde\sigma, \sigma}$ is not reduced to $\tilde\sigma$ is at most countable. 
Indeed, by compactness, there can be only finitely many of them with height exceeding a given $\eps>0$. 
Let $\sigma_1,\sigma_2,\ldots$ be the branching points of $R_\sigma$ in $[\sigma,\rho]$. 
Notice that if $\paren{\sigma_i^n,n\in\na}$ is a sequence in $[\sigma,\sigma_i)$ converging to $\sigma_i$ then the real tree of $\tr{c}_{\sigma_i,\sigma}$, denoted $\tau_{\sigma_i,\sigma}$ is given by\begin{linenomath}\begin{esn}
\tau_{\sigma_i,\sigma}=\bigcap_n R_{\sigma_i^n}\cap L_{\sigma_i}. 
\end{esn}\end{linenomath}

Hence, $\tr{c}_{\eta_i,\sigma}$ can be coded by\begin{linenomath}\begin{esn}
f^i=\imf{f}{\cdot+s_i}-\imf{f}{s^i}\text{ on }[0,s^i-s_i]\end{esn}\end{linenomath}
where\begin{linenomath}\begin{esn}
s_i=\lim_{n\to\infty}\imf{\mu}{L_{\eta^n_i}}\quad\text{and}\quad s^i=\imf{\mu}{L_{\eta_i}}. 
\end{esn}\end{linenomath}However, if $x_i=\imf{d}{\eta_i,\rho}$, we also have the representation\begin{linenomath}\begin{esn}
s_i=\inf\set{t\geq s: \imf{f}{t}\leq \imf{f}{s}-x_i}\quad\text{and}\quad s^i=\inf\set{t\geq s: \imf{f}{t}< \imf{f}{s}-x_i}
\end{esn}\end{linenomath}by our previous identification of the elements of $[\sigma,\rho]$, which shows that $\tr{c}_{\eta_i,\sigma}$ can be coded by an excursion of $f$ above its cumulative minimum. 

Under the additional assumptions that $\tr{c}$ is binary with sojourn $a$ and that $s=\imf{\mu}{L_\sigma}$, 
we can reconstruct $\imf{f}{s+\cdot}$ from $\imf{f}{s}$, the sequence $\paren{x_i}$ and each of the functions $f^i$. 
Indeed, for any $l\in [0,\imf{f}{s}]$, let $\sigma_l$ be the point of $[\sigma,\rho]$ at distance $l$ from $\sigma$ and consider $S_l$ to be the measure of $R_{\sigma}\cap L_{\sigma_l}$. 
Then because of constant sojourn, say $a$,
\begin{linenomath}\begin{esn}
S_l=\sum_{\imf{d}{\eta_i,\sigma}\leq l} \imf{\mu}{\tau_{\eta_i,\sigma}}+ a l.
\end{esn}\end{linenomath}
Note that $\imf{d}{\eta_i,\sigma}\leq l$ if and only if $\imf{f}{s}-x_i\leq l$ 
and $\imf{\mu}{\tau_{\eta_i,\sigma}}$ is the Lebesgue measure of the interval of definition of $f^i$, so that $S$ can be constructed from the aforementioned quantities. 
Also,\begin{linenomath}\begin{esn}
S_{x_i-}=s_i\quad\text{and}\quad S_{x_i}=s^i.
\end{esn}\end{linenomath}Now let 
$\Lambda$ be the right-continuous inverse of $S$. 
By considering the dense set of measures of left sets (cf. Lemma \ref{DensityOfImageLemma}), we see that
\begin{linenomath}\begin{esn}
\Lambda_t=\imf{d}{\sigma, \imf{\phi}{t+s}\wedge \sigma}.
\end{esn}\end{linenomath}
for $t\in [0,\imf{\mu}{R_\sigma}]$, where we have seen that $\imf{\mu}{R_\sigma}=\imf{\zeta}{f}-s$. 
Also, if $\imf{\phi}{t+s}$ does not belong to $[\sigma,\rho]$ then there exists a unique $i$ such that $\imf{\phi}{t+s}\in\tau_{\eta_i,\sigma}$ and so\begin{linenomath}\begin{esn}
\imf{d}{\imf{\phi}{t+s},\imf{\phi}{t+s}\wedge\imf{\phi}{s}}
=\sum_{i} \indi{S_{x_i-}\leq t<S_{x_i}}\imf{f_i}{t-S_{x_i-}}.
\end{esn}\end{linenomath}We therefore obtain the representation
\begin{linenomath}\begin{align}
\label{deterministicItoRepresentation}
\imf{f}{s+t}-\imf{f}{s}
&=\imf{d}{\imf{\phi}{t+s},\rho}-\imf{d}{\imf{\phi}{s},\rho}
\\&=\imf{d}{\imf{\phi}{t+s},\imf{\phi}{s}\wedge \imf{\phi}{t+s}}-\imf{d}{\imf{\phi}{s},\imf{\phi}{s}\wedge \imf{\phi}{t+s}}\nonumber
\\&=\sum_{i}\indi{S_{x_i-}\leq t<S_{x_i}}\imf{f_i}{t-S_{x_i-}}-\Lambda_t.\nonumber
\end{align}\end{linenomath}

\begin{proof}[Proof of Theorem \ref{MeasuresWithSplittingPropertyCompactCaseTheorem}]
Let us start with the proof that the tree coded by the excursion measure of a L\'evy process has the splitting property. 
Since we have just identified right subtrees along $[\rho,\sigma)$ with excursions of the contour above its cumulative minimum, it suffices to prove that, for any $t>0$ and on the set $\set{\zeta>t}$, the excursions of $X_{\cdot+t}$ above its cumulative minimum form a Poisson point process. 
Let us recall that, under $\nu$, on the set $\set{X_t\in (0,\infty)}$ and conditionally on $X_t=x$, the path $X_{\cdot+t}$ has law $\q_x$. 
Hence, the point process with atoms at starting levels of the excursions and the excursions themselves is a
Poisson point process on $[0,x]\times \mathbf{E}$ with intensity $\leb\otimes \nu$.  
We conclude the splitting property of $\eta^\Psi$. 

Consider now a non-zero measure $\kappa$ on compact TOM trees with the splitting property and let $\nu$ be the push-forward of $\kappa$ by the contour map. 

We start by proving some consequences of the splitting property of $\kappa$. 
First, we prove that, in the compact case, the splitting property can be recast as
\begin{linenomath}
\begin{esn}
\imf{\kappa}{\imf{h}{X_t}e^{-\Xi_t^\infty g}}=\imf{\kappa}{\imf{h}{X_t}e^{-\int_{[0,X_t]\times \cts_c} \imf{g}{s,\tr{c}}\,\imf{\kappa}{d\tr{c}}\,ds}}. 
\end{esn}\end{linenomath}Indeed, note that if $\tr{c}$ is a compact TOM tree, 
then its height $\sup_{\sigma\in\tau}\imf{d}{\rho,\sigma}$ is finite. 
Hence, for any $r$ greater than the height, 
$\tr{c}$ coincides with its truncation $\tr{c}^r$ at level $r$. 
Therefore, for any measurable subset $A$ of the set of compact TOM trees, we have that
\begin{linenomath}
\begin{esn}
A=\lim_{r\to\infty} \set{\tr{c}: \tr{c^r}\in A}. 
\end{esn}\end{linenomath}We deduce that $\kappa^r$ converges to $\kappa$ in the following sense:\begin{linenomath}\begin{esn}
\imf{\kappa}{A}=\lim_{r\to\infty}\imf{\kappa^r}{A}. 
\end{esn}\end{linenomath}Hence, for any $g$ vanishing on trees with small height: \begin{linenomath}
\begin{align*}
\imf{\kappa}{\imf{h}{X_t}e^{-\Xi_t^\infty g}}
&=\lim_{r\to\infty}\imf{\kappa^r}{\imf{h}{X_t}e^{-\Xi_t^r g}}
\\&=\lim_{r\to\infty}\imf{\kappa^r}{\imf{h}{X_t}e^{-\int_{[0,X_t]\times \cts_c} (1-e^{-g(s,\tr{c})})\,\imf{\kappa^{r-s}}{d\tr{c}}\, ds}}
\\&=\imf{\kappa}{\imf{h}{X_t}e^{-\int_{[0,X_t]\times \cts_c} (1-e^{-g(s,\tr{c})})\,\imf{\kappa}{d\tr{c}}\, ds}}.
\end{align*}\end{linenomath}The identity between the extremes can then be generalized to non-negative and measurable $g$. 
We now prove the integrability conditions\begin{linenomath}\begin{esn}
\imf{\kappa}{\zeta>h}<\infty\text{ for }h>0
\quad\text{and}\quad
\int_0^\infty 1\wedge \imf{\zeta}{\tr{c}}\, \imf{\kappa}{d\tr{c}}<\infty.
\end{esn}\end{linenomath}Since $\kappa$ is non-zero, there exists $t>0$ such that $\imf{\kappa}{\zeta>t}>0$. 
Note that in $\Xi_t^\infty$, there can only be a finite number of atoms $(s_i,\tr{c}_i)$ such that $\tr{c}_i$ has measure $>\eps$ for any $\eps>0$. 
By the splitting property, applied with $h=\indi{[0,\infty)}$ and $\imf{g}{s,\tr{c}}=\indi{\imf{\zeta}{\tr{c}}>\eps}$, we get: 
\begin{linenomath}
\begin{align*}
0<
&\imf{\kappa}{\imf{h}{X_t}e^{-\# \text{ subtrees of measure $>\eps$ to the right of }[\rho, \imf{\phi}{t}]}}
\\&=\imf{\kappa}{\imf{h}{X_t}e^{-\int_{[0,X_t]\times \cts_c}(1-e^{-g(s,\tr{c})})\,\imf{\kappa}{d\tr{c}}\, ds}}
\\&=\imf{\kappa}{\imf{h}{X_t}e^{-(1-e^{-1})X_t\cdot \imf{\kappa\,}{\zeta>\eps}}}.
\end{align*}
\end{linenomath}We conclude that $\imf{\kappa}{\zeta>\eps}<\infty$ for any $\eps>0$. 
We now choose $g(s,\tr{c})= \imf{\zeta}{c}$. 
Next, under $\kappa$, the right of $\imf{\phi}{t}$ has finite measure almost everywhere, so that, using the fact that $\kappa$ has constant sojourn, say equal to $a$: 
\begin{linenomath}
\begin{align*}
0<\imf{\kappa}{\zeta>t}
&=\imf{\kappa}{\indi{\zeta>t, \text{ measure of the right of }\imf{\phi}{t}<\infty}}
\\&=\lim_{\lambda\to 0+}
\imf{\kappa}{\indi{\zeta>t} e^{-\lambda\text{ measure of the right of }\imf{\phi}{t}}}
\\&=\lim_{\lambda\to 0+}\imf{\kappa}{\imf{h}{X_t} e^{- a\lambda X_t-\lambda \Xi^\infty_t g}}
\\&=\lim_{\lambda\to 0+}\imf{\kappa}{\imf{h}{X_t} e^{- aX_t [\lambda+t \int (1-e^{-\imf{\zeta}{\tr{c}}})\,\imf{\kappa}{d\tr{c}}]  } }
\end{align*}
\end{linenomath}We conclude that 
$\int  1\wedge \imf{\zeta}{\tr{c}}\, \imf{\kappa}{d\tr{c}}<\infty$. 

Let $\Xi=\sum_i\delta_{\paren{x_i,f_i}}$ be a Poisson random measure on $[0,\infty)\times \mathbf{E}$ with intensity $\leb \otimes \nu$.  
Define the process $Y$ by the following procedure 
inspired by It\^o's synthesis theorem (cf. \cite{MR0402949}):
Let\begin{linenomath}\begin{esn}
S_l=al+\sum_{x_i\leq l} \imf{\zeta}{f_i}\quad\text{and}\quad \Lambda=S^{-1}.
\end{esn}\end{linenomath}We then let\begin{linenomath}\begin{esn}
Y_t=
\sum_{x_i\leq x}\indi{S_{x_i-}\leq t<S_{x_i}}\imf{f_i}{t-S_{x_i-}}-\Lambda_t
\end{esn}\end{linenomath}in analogy with \eqref{deterministicItoRepresentation}. 
Heuristically, $Y$ is the contour process of a tree obtained by grafting trees with law $\nu$ to the right of the vertical interval $(-\infty,0]$. We claim that $Y$ is a (sub)critical spectrally positive L\'evy process. 
If so, let $\Psi$ be its Laplace exponent. 
Since the compact TOM tree coded by the excursion measure of $Y$ above its cumulative minimum process has law $\kappa$ by construction, and if $Y$ is a (sub)critical L\'evy process this law should equal, by definition, $\eta^\Psi$, we see that $\kappa=\eta^\Psi$.


Let us prove that $Y$ is a (sub)critical spectrally positive L\'evy process. 
By construction, $Y$ has no positive jumps. 
Also, note that the running infimum of $Y$ is $-\Lambda$, which goes to $-\infty$. 
Hence, if $Y$ is proved to be a L\'evy process, then it is spectrally positive and is critical or subcritical. 
It remains to see that $Y$ has independent and stationary increments. 

Let us note that the process $Y^x$ obtained by killing $Y$ upon reaching $-x$ has the following construction in terms of Poisson random measures:
\begin{linenomath}\begin{esn}
Y^x_t=\begin{cases}
\sum_{x_i\leq x}\indi{S_{x_i-}\leq t<S_{x_i}}\imf{f_i}{t-S_{x_i-}}-\Lambda_t& t<S_{x}\\
\dagger& S_{x}\leq t
\end{cases}.
\end{esn}\end{linenomath}
Let $\tilde \q_x$ be the law of $x+Y^x$. 
By translation invariance of Lebesgue measure and independence properties of Poisson random measures we see that the two processes\begin{linenomath}\begin{esn}
\begin{cases}
x+Y^{x+y}_t&t<S_x\\
\dagger&t\geq S_x
\end{cases}
\quad\text{and}\quad 
\begin{cases}
x+y+Y^{x+y}_{t+S_x}& t<S_{x+y}-S_x\\
\dagger&t\geq S_{x+y}-S_x
\end{cases}
\end{esn}\end{linenomath}are independent and have respective laws $\tilde \q_x$ and $\tilde \q_y$. 
Also, note that if we concatenate a process with the same law as $Y$ to an independent process with law $\tilde \q_x$ when it gets killed, then we get a process with law $Y$. 

For any $t\geq 0$, let\begin{linenomath}\begin{align*}
\underline Y_t&=\inf_{s\leq t} Y_s=-\Lambda_t,\\
\quad g_t&=\sup\set{s\leq t: Y_s=\underline Y_s}=S_{\Lambda_t-}\\
\intertext{and} d_t&=\inf\set{s\geq t: Y_s=\underline Y_s}=S_{\Lambda_t}. 
\end{align*}\end{linenomath}Notice that the interval $[g_t,d_t]$ can be reduced to a point but that when it isn't, it is an interval of constancy for $\Lambda$. 
Also, note that  $g_t=d_t$ if and only if $S$ is continuous at $\Lambda_t$. 
Let $\Xi_t$  be the restriction of $\Xi$ to $[0,\Lambda_{t}]\times \mathbf{E}$ and define the random measure $\Xi^t$ characterized by
\begin{linenomath}\begin{esn}
\imf{\Xi^t}{A\times B}=\imf{\Xi}{\paren{A+\Lambda_{t}}\times B},
\end{esn}\end{linenomath}
where $A$ and $B$ are Borel subsets of $[0,\infty)$ and of the excursion space $\mathbf{E}$ respectively. 
Let $\G_l$ and $\G^l$ be the \sa s generated by the restriction of $\Xi$ to $[0,l]\times \mathbf{E}$ and $[l,\infty)\times \mathbf{E}$. 
Then $\G_l$ is independent of $\G^l$. 
It is simple to see that $\Lambda_t$ is a stopping time for $\paren{\G_l}$ for any $t\geq 0$ and using the independence of $\G_l$ and $\G^l$, 
the translation invariance of Lebesgue measure and discretizing $\Lambda_t$ by $\floor{\Lambda_t2^n}/2^n$, 
one shows the independence of $\Xi_t$ and $\Xi^t$ and that $\Xi^t$ has the same law as $\Xi$ by mimicking the proof of the strong Markov property for Feller processes. 
Define\begin{linenomath}\begin{esn}
\imf{\nu_l}{A}=\frac{\imf{\nu}{A, \zeta>l}}{\imf{\nu}{\zeta>l}}.
\end{esn}\end{linenomath}Using the analysis of the excursions straddling a given time of \cite[Ch. XIII\S 3, p. 488]{MR1725357}, 
we see that defining the \sa\ $\F^Y_{g_t}$ as $\sag{Y_{s\wedge g_t}: s\geq 0}$, 
then the law of the excursion of $Y$ straddling $t$ given $\F^Y_{g_t}$ equals\begin{linenomath}\begin{esn}
\indi{s<t}\nu_{t-s}+\indi{s=t}\delta_{\dagger}
 \end{esn}\end{linenomath}on the set $g_t=s$. 
The above discussion gives us access to 3 different parts of the path of $Y$: 
before $g_t$, 
between $g_t$ 
and $d_t$ (conditionally on $\F^Y_{g_t}$) and after $d_t$ (which, if we shift it to start it at zero, is independent of the pre-$d_t$ part and has the same law as $Y$ since it is constructed from $\Xi^t$ in the same manner as $Y$ is constructed from $\Xi$). 
We now study the measure $\nu$ to make a further description of the parts of the path between $g_t$ and $t$  and between $t$ and $d_t$.

Consider the shift operators $\theta_t$ on canonical space which take $f$ to the function $s\mapsto \imf{f}{t+s}$. 
Thanks to the splitting property of $\kappa$, we see that conditionally on $X_s$ and under $\nu$, the post-$s$ process has law $\tilde \q_{X_s}$. 
Formally 
the splitting property translates into\begin{linenomath}\begin{equation}
\label{splittingPropertyInductionBase}
\imf{\nu}{\imf{h}{X_s}F\circ\theta_s}=\imf{\nu}{\imf{h}{X_s}\imf{\tilde\q_{X_s}}{F}}
\end{equation}\end{linenomath}for nonnegative measurable $h$ 
and $F$ defined on $[0,\infty)$ and $\mathbf{E}$ respectively (with $g$ vanishing at $\dagger$). 
We now prove an analogous result under $\tilde\q_x$: 
\begin{linenomath}
\begin{equation}
\label{oneDimKilledMarkov}
\imf{\tilde \q_x}{\imf{h}{X_s} F\circ \theta_s}
=\imf{\tilde \q_x}{\imf{h}{X_s}\imf{\tilde \q_{X_s}}{F}}.
\end{equation}\end{linenomath}It is obtained from \eqref{splittingPropertyInductionBase} as follows. 
Fix $s>0$. 
Using the Poissonian description of $\tilde \q_x$, 
we see that under this measure, 
the trajectory splits into 3 independent subpaths: 
1) the subpath $X_{\cdot \wedge g_s}$ before $g_s$, 2) the subpath 
$X_{g_s+\cdot \wedge d_s}-X_{g_s-}$ between $g_s$ and $d_s$,
and 3) the subpath $X_{d_s+\cdot}$ after $d_s$. 
We now work conditionally on 
$\underline X_s=y, X_s=z, g_s=s'$; note that $X_{s-}=z$ also. 
After $d_s$, the process has law $\tilde \q_{y}$. 
Between $g_s$ and $d_s$, 
the process has law $\nu_{s-s'}$. 
Using \eqref{splittingPropertyInductionBase}, we see that under $\nu_{s-s'}$, 
the post $s-g_s$ part of the trajectory has law $\tilde \q_{z-y}$. 
By concatenation, we see that under $\tilde \q_x$ and on the set $\underline X_s=y, X_s=z, g_s=s'$, the post-$s$ part of the trajectory has law $\tilde \q_{X_s}$, which implies \eqref{oneDimKilledMarkov}. 
We deduce that $\tilde \q_x$ is Markovian 
and that, therefore $\nu$ also is. 

Finally, we finish the proof that $Y$ is a L\'evy process. 
Let $t>0$ and note that the process $Y$ has 3 independent subpaths by our analysis of its Poissonian construction: 
before $g_t$, between $g_t$ and $d_t$ (shifted by $-\Lambda_t$ to end at zero) and after $d_t$ (shifted by $-\Lambda_t$ to start at zero). 
Also, the law of the (shifted) process between $g_t$ and $d_t$ is $\nu_{t-g_t}$ and by the Markov property (under $\nu$) 
we see that the conditional law of the process between  $t$ and $d_t$ (minus $\Lambda_t$, to start at zero) given the pre-$t$ process is $\tilde \q_{X_t-L_t}$. 
By concatenating the post $d_t$-part, which has law the law of $Y$ (when shifted to start at zero)  
we see that the law of $Y_{t+\cdot}-Y_t$ 
given $\sag{Y_s:s\leq t}$ is the law of $Y$. 
Hence, $Y$ is a L\'evy process.
\end{proof}
\section{Locally compact TOM trees}
\label{LocallyCompactCRTSection}
We have already noted in Section \ref{IntroductionSection} that truncations of locally compact real trees are compact and defined the truncation of a locally compact TOM tree. 
In this short section, we explore the relationship between the contour processes of two truncations of a same locally compact TOM tree. 
We then use this information to see how to build locally compact TOM trees out of sequences of consistent compact TOM trees. 

The truncation at level $r$ of a locally compact TOM tree $\tr{c}$ is the compact TOM tree\begin{linenomath}\begin{esn}
\tr{c}^r=\paren{\paren{\tau^r,d|_{\tau^r},\rho},\leq|_{\tau_r},\mu|_{\tau^r}}.
\end{esn}\end{linenomath}We now study the relationship of the contour processes of two different truncations of the same tree.

\begin{proposition}
\label{HeightProcessOfTruncatedChronologicalTreeProposition}
Let $\tr{c}$ be a compact TOM tree. The contour processes $f$ and $f^r$ of $\tr{c}$ and $\tr{c}^r$ are related by a time-change as follows: 
let $\imf{C^r}{f}$ be the right-continuous inverse of the continuous and non-decreasing function $A^r=\imf{A^r}{f}$ given by\begin{linenomath}\begin{esn}
A^r_t=\int_0^t \indi{\imf{f}{s}\leq r}\, ds.
\end{esn}\end{linenomath}Then\begin{linenomath}\begin{esn}
f^r=f\circ C^r. 
\end{esn}\end{linenomath}
\end{proposition}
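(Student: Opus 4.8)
The plan is to show that the time-changed function $g:=f\circ C^r$ is the contour of $\tr{c}^r$, and then to invoke the uniqueness of the contour function underlying Theorem \ref{CRTCodingTheorem}. First I would record the basic properties of the time change. Since $f$ is \cadlag\ (hence bounded and Borel), $A^r$ is continuous and non-decreasing, so its right-continuous inverse $C^r_u=\inf\set{t:A^r_t>u}$ is well defined and non-decreasing, with $A^r\circ C^r=\id$ on $[0,A^r_m]$ and $C^r_{A^r_m}=m$. Because $\mu$ is the push-forward of Lebesgue measure under the projection $p_f$ and $\imf{f}{t}=\imf{d}{\rho,\imf{\phi}{t}}$ (which follows from $\imf{f}{m}=0$ and $f\geq0$, giving $\imf{d_f}{t,m}=\imf{f}{t}$), the set $\set{t:\imf{f}{t}\leq r}$ coincides with $p_f^{-1}\paren{\tau^r}$ up to $\mu$-null sets, whence $A^r_m=\imf{\mu}{\tau^r}=\imf{\mu^r}{\tau^r}$. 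Thus $g$ is defined on the correct interval $[0,A^r_m]$ and $\imf{g}{A^r_m}=\imf{f}{m}=0$. A short case analysis, distinguishing whether $C^r$ is continuous or jumps across an excursion of $f$ above level $r$, shows that $g$ is \cadlag\ with no negative jumps, so $g$ codes a compact TOM tree $\tr{c}_g$.

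Next I would produce an isomorphism $\tr{c}_g\cong\tr{c}^r$ induced by the map $u\mapsto \imf{\phi}{C^r_u}=p_f(C^r_u)$. The heart of the matter is the identity $\imf{m_g}{u,v}=\imf{m_f}{C^r_u,C^r_v}$ for $u<v$ in $[0,A^r_m]$. The inequality $\geq$ is immediate since $C^r_w\in[C^r_u,C^r_v]$ for $w\in[u,v]$; for $\leq$ one notes that $A^r_{C^r_v}-A^r_{C^r_u}=v-u>0$ forces $\set{f\leq r}$ to have positive Lebesgue measure in $[C^r_u,C^r_v]$, so $\imf{m_f}{C^r_u,C^r_v}\leq r$ and the infimum of $f$ is therefore approached along points where $f\leq r$, which lie in the range of $C^r|_{[u,v]}$ up to right-continuity. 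Combined with $\imf{g}{u}=\imf{f}{C^r_u}$ this yields $\imf{d_g}{u,v}=\imf{d_f}{C^r_u,C^r_v}=\imf{d}{p_f(C^r_u),p_f(C^r_v)}$, and since $\imf{f}{C^r_u},\imf{f}{C^r_v}\leq r$ the two projected points lie in $\tau^r$, where $d$ and $d^r$ agree. Hence the map descends to a well-defined isometry from $\tr{c}_g$ onto its image, which is surjective onto $\tau^r$ because every point of $\tau^r$ is $\imf{\phi}{t}$ for some $t$ with $\imf{f}{t}\leq r$, and such $t$ agrees as a point of the tree with $C^r_{A^r_t}$. The order is preserved because $C^r$ is non-decreasing and the order on $\tau^r$ is the restriction of that on $\tau$; the measure is preserved because $(C^r)_\ast\,\leb_{[0,A^r_m]}=\indi{\set{f\leq r}}\cdot\leb$ by the occupation-time identity for $A^r$ and $C^r$, and both $\mu_g$ and $\mu^r$ are the respective push-forwards of these Lebesgue measures.

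Having shown that $g=f\circ C^r$ codes a tree isomorphic to $\tr{c}^r$, I would conclude by uniqueness: the contour $f^r=f_{\tr{c}^r}$ is the unique \cadlag\ function with non-negative jumps coding $\tr{c}^r$ (the characterization via property \defin{Min} behind Theorem \ref{CRTCodingTheorem}), so $f^r=g=f\circ C^r$.

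I expect the main obstacle to be the rigorous treatment of the two "up to a null or countable set" points: that the infimum of $f$ over $[C^r_u,C^r_v]$ is genuinely attained along the range of the time change, so that $\imf{m_g}{u,v}=\imf{m_f}{C^r_u,C^r_v}$ holds exactly rather than merely as an inequality; and that the map $u\mapsto p_f(C^r_u)$ both covers all of $\tau^r$ and transports $\mu_g$ exactly onto $\mu^r$. These require careful use of the \cadlag\ regularity of $f$ and of the diffusivity of $\mu$ at the countably many exceptional exploration times, but involve no new ideas beyond the occupation-time identity.
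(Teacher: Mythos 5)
The first two stages of your plan (showing that $g=f\circ C^r$ is \cadlag\ with non-negative jumps and that it codes a tree isomorphic to $\tr{c}^r$) are essentially sound, but the final step rests on a false uniqueness claim. Theorem \ref{CRTCodingTheorem} asserts that the canonical map $\tr{c}\mapsto f_{\tr{c}}$ is injective on isomorphism classes; it does \emph{not} assert that $f_{\tr{c}^r}$ is the only \cadlag\ function with non-negative jumps and null equivalence classes coding a tree isomorphic to $\tr{c}^r$, and that stronger statement is simply wrong. For a concrete counterexample, take the tent function $\imf{g}{t}=t$ on $[0,1)$, $\imf{g}{t}=2-t$ on $[1,2]$: its equivalence classes are the pairs $\set{t,2-t}$, hence Lebesgue-null, and the coded tree is the segment of length $1$ rooted at an endpoint and carrying twice Lebesgue measure --- the same TOM tree coded by $t\mapsto 1-t/2$ on $[0,2]$, which is its canonical contour (one checks $\imf{\psi}{\sigma}=\imf{\mu}{L_\sigma}=2\paren{1-\sigma}$). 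So two distinct admissible functions code isomorphic trees, and knowing that $f\circ C^r$ codes a tree isomorphic to $\tr{c}^r$ does not yet give $f\circ C^r=f^r$. To close the gap you would have to verify in addition that $f\circ C^r$ is the \emph{canonical} contour of the tree it codes, i.e.\ that it satisfies the property \defin{Min} (equivalently, that $\imf{\mu_g}{L_{[u]_g}}=\sup[u]_g$ for all $u$, which fails for the ascending branch of the tent); proving that this property survives the time change is essentially the whole content of the proposition, so as written the argument is circular at its decisive point.

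The paper's proof sidesteps this entirely by comparing the two canonical contours directly: using $\mu=\leb\circ\phi^{-1}$ it computes $\imf{\mu^r}{L_\sigma}=A^r_{\imf{\mu}{L_\sigma}}$ for $\sigma\in\tau^r$, checks that $A^r$ increases in every right neighbourhood of $t=\imf{\mu}{L_\sigma}$ (by approximating $\sigma$ with strict ancestors $\sigma_n$, whose exploration times $t_n\downarrow t$ satisfy $\imf{f}{t_n}<r$), so that $C^r_{A^r_t}=t$ and hence $\imf{f^r}{s}=\imf{f}{C^r_s}$ on the dense set $\set{\imf{\mu^r}{L_\sigma}:\sigma\in\tau^r}$; right-continuity of both sides then finishes the proof. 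If you want to salvage your route, the identity $\imf{\mu^r}{L_\sigma}=A^r_{\imf{\mu}{L_\sigma}}$ is exactly the ingredient you would need anyway to show that $f\circ C^r$ is canonical, at which point the detour through the coded tree and the isomorphism becomes unnecessary.
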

\begin{proof}
As verified at the end of the proof of Proposition \ref{HeightProcessCodesTreeProposition}, we have the equality $\mu=\leb\circ \phi^{-1}$, where $\phi$ is the exploration process of $\tr{c}$.
Hence\begin{linenomath}\begin{esn}
\imf{\mu^r}{L_\sigma}=\imf{\mu}{L_\sigma\cap\tau^r}=\imf{\leb}{[0,\imf{\mu}{L_\sigma}]\cap\set{t:\imf{d}{\imf{\phi}{t},\rho}\leq r}}=\imf{\leb}{\set{t\leq\imf{\mu}{L_\sigma}:\imf{f}{t}\leq r}}.
\end{esn}\end{linenomath}Let $\sigma\in\tau^r$ and define $s=\imf{\mu^r}{L_\sigma}$ and $t=\imf{\mu}{L_\sigma}$ so that\begin{linenomath}\begin{esn}
\imf{f^r}{s}=\imf{d}{\sigma,\rho}=\imf{f}{t}.
\end{esn}\end{linenomath}As we just computed,\begin{linenomath}\begin{esn}
s=\imf{\mu^r}{L_\sigma}=A^r_t.
\end{esn}\end{linenomath}However, by taking $\sigma_n\in [\rho,\sigma)$ satisfying $\sigma\leq\sigma_{n+1}\leq\sigma_n$ and setting $t_n=\imf{\mu}{L_{\sigma_n}}$, 
we see that $t_n$ decreases to $t$ and $\imf{f}{t_n}<r$, so that $A^r$ increases in any right neighborhood of $t$ and so\begin{linenomath}\begin{esn}
t=C^r_s.
\end{esn}\end{linenomath}Hence $f^r=f\circ C^r$ on the set $\set{\imf{\mu^r}{L_\sigma}:\sigma\in\tau^r}$, which by Lemma \ref{DensityOfImageLemma} is dense. We conclude that $f^r=f\circ C^r$ on $[0,\imf{\mu}{\tau^r}]$.
\end{proof}

We now use the preceding proposition to identify the space of locally compact TOM trees with the direct limit of compact TOM trees. 
Indeed, note that if $\tr{c}$ is a locally compact TOM tree and $\tr{c}^n$ is its truncation at height $n$, then the truncation of $\tr{c}^{n+1}$ at height $n$ equals $\tr{c}^n$.
Conversely, let $\paren{r_n,n\in\na}$ be a sequence of non-negative real numbers increasing to $\infty$ and $\paren{\tr{c}_n,n\in\na}$ be a sequence of compact TOM trees. 
\begin{definition}
The sequence  $\paren{\tr{c}_n,n\in\na}$ is said to be \defin{consistent} (at levels $\paren{r_n,n\in\na}$) if the truncation of $\tr{c}_{{n+1}}$ at height $r_n$ is isomorphic to $\tr{c}_n$. 
\end{definition}

\begin{proposition}
\label{inductiveLimitProposition}
If $\paren{\tr{c}_n,n\in\na}$ is a consistent sequence of compact TOM trees at levels $r_1<r_2<\cdots$ then there exists a unique locally compact TOM tree $\tr{c}$ such that the truncation of $\tr{c}$ at level $r_n$ is isomorphic to $\tr{c}_n$. 
\end{proposition}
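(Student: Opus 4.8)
The plan is to realize $\tr{c}$ as the direct limit (increasing union) of the $\tr{c}_n$ along the consistency isomorphisms. By definition of consistency, for each $n$ there is an isomorphism $\psi_n$ identifying $\tr{c}_n$ with the truncation $\tr{c}_{n+1}^{r_n}$, that is, an isometric, root-, order- and measure-preserving bijection of $\tau_n$ onto $\set{\sigma\in\tau_{n+1}:\imf{d_{n+1}}{\sigma,\rho_{n+1}}\leq r_n}$. Iterating the consistency relation and using that truncating at $r_n$ after truncating at $r_{n+1}\geq r_n$ again yields the truncation at $r_n$, I may regard the $\psi_n$ as inclusions $\tau_n\subseteq\tau_{n+1}$ under which $\tau_n$ is exactly the closed $r_n$-ball about the common root, and set $\tau=\bigcup_n\tau_n$ with $\rho=\rho_1$. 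I then define $d$ and $\leq$ on $\tau$ by declaring $\imf{d}{\sigma,\sigma'}=\imf{d_n}{\sigma,\sigma'}$ and $\sigma\leq\sigma'$ iff $\sigma\leq_n\sigma'$ whenever both points lie in $\tau_n$; these are well defined since the embeddings are isometric and order-preserving, and $\leq$ is total because any two points share some $\tau_n$.

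The first thing to check is that $\paren{\tau,d,\rho}$ is a locally compact real tree. Completeness is the one genuinely metric point: a Cauchy sequence is bounded, hence contained in some $\set{\sigma:\imf{d}{\sigma,\rho}\leq r_n}=\tau_n$ (here I use $r_n\to\infty$), where it converges by compactness of $\tau_n$. Uniqueness of geodesics and absence of loops then transfer from the $\tau_n$ once I observe that each $\tau_n$, being a closed ball about the root, is ancestor-closed and geodesically convex, so that the interval between two points of $\tau_n$ stays inside $\tau_n$; any competing injective continuous path between them has compact, hence bounded, image contained in some $\tau_m$, reducing the claim to the real-tree axioms of $\tau_m$. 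Local compactness follows because closed bounded balls are closed subsets of some compact $\tau_m$. The order axioms \textbf{Or1} and \textbf{Or2} are statements about finitely many points and the tree structure they span, all of which live in a single $\tau_n$ where these axioms already hold and where $\preceq$, $\wedge$ and the intervals agree with those of $\tau$; so they transfer verbatim.

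Next I construct the measure. Consistency gives $\imf{\mu_{n+1}}{A}=\imf{\mu_n}{A}$ for Borel $A\subseteq\tau_n$ (truncation acts by restriction), so the quantities $\imf{\mu_n}{A\cap\tau_n}$ increase in $n$, and I set $\imf{\mu}{A}=\lim_n\imf{\mu_n}{A\cap\tau_n}$. Countable additivity follows from monotone convergence, and $\mu$ restricts to $\mu_n$ on $\tau_n$; in particular $\mu$ is $\sigma$-finite and its truncation at level $r_n$ returns $\tr{c}_n$, as required. Diffuseness (\textbf{Mes2}) and local finiteness pass down from each $\mu_n$, and the positivity half of \textbf{Mes1} is even easier: for $\sigma_1<\sigma_2$ in some $\tau_n$, the order-interval computed in $\tau$ contains the one computed in $\tau_n$, whose $\mu$-mass is already positive. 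This completes existence.

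For uniqueness, the key observation is that an isomorphism between two compact TOM trees, when it exists, is unique: the contour function and the associated exploration process $\phi$ are built canonically from the order and the measure (Theorem \ref{CRTCodingTheorem}), so any isomorphism $g$ must satisfy $g\circ\phi=\phi'$, and $\phi$ is onto. Now if $\tr{\tilde c}$ is a second locally compact TOM tree with $\tr{\tilde c}^{r_n}\cong\tr{c}_n$ for all $n$, then $\tilde\tau=\bigcup_n\tilde\tau^{r_n}$ (again using $r_n\to\infty$) and for each $n$ there is a unique isomorphism $\Phi_n$ from $\tr{c}^{r_n}$ to $\tr{\tilde c}^{r_n}$. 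The restriction of $\Phi_{n+1}$ to the $r_n$-ball is again such an isomorphism, so uniqueness forces it to equal $\Phi_n$; the $\Phi_n$ are therefore coherent and glue to a root-preserving isometric bijection $\Phi:\tau\to\tilde\tau$ that preserves order and, passing to the limit in $n$, preserves $\mu$. Thus $\tr{\tilde c}\cong\tr{c}$. I expect the coherence step, upgrading the level-wise isomorphisms to a global one, to be the main obstacle, and it is precisely the uniqueness of compact-tree isomorphisms afforded by the canonical contour coding that resolves it.
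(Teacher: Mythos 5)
Your construction of the limit tree is the same one the paper uses: a direct limit of the $\tau_n$ along the consistency isomorphisms, with the metric, order and measure defined level by level and the measure obtained as the increasing limit of the $\mu_n$. Where your write-up genuinely adds something is in two places the paper's proof leaves implicit. First, you isolate exactly where the hypothesis $r_n\to\infty$ enters: a Cauchy sequence is bounded, hence eventually trapped in a single compact ball $\tau_n$, which is what makes the union complete (the paper only says completeness follows ``by a similar argument'', and without $r_n\to\infty$ the union of the balls need not be complete, so this is the right point to make explicit). Second, you actually prove the uniqueness assertion of the statement, which the paper's proof does not address at all. Your key lemma there --- that an isomorphism between two compact TOM trees is unique when it exists, because order- and measure-preservation force $\psi'\circ g=\psi$ with $\psi(\sigma)=\imf{\mu}{L_\sigma}$ strictly increasing, hence $g\circ\phi=\phi'$ with $\phi$ surjective --- is correct and is precisely what makes the level-$r_n$ isomorphisms automatically coherent so that they glue. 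The remaining verifications (geodesic convexity and ancestor-closedness of the balls, transfer of \textbf{Or1}--\textbf{Or2} and \textbf{Mes1}--\textbf{Mes2}, the identification of $\tau_n$ with the closed $r_n$-ball of the union) all check out. In short: same existence argument as the paper, plus a completeness justification and a uniqueness proof that the paper omits.
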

\begin{proof}
Let us construct a locally compact TOM tree from a consistent sequence $\paren{\tr{c}_n,n\in\na}$ at levels $r_1<r_2<\cdots$; 
suppose that $\tr{c}_n=\paren{\paren{\tau_n,d_n,\rho_n},\leq_n,\mu_n}$, let $\psi_n$ be an isomorphism between $\tr{c}_n$ and the truncation of $\tr{c}_{n+1}$ at level $r_n$  (which is actually a mapping between $\tau_{n}$ and $\tau_{n+1}$ preserving distances, root, total order and which maps the measure $\mu_n$ into the restriction of $\mu_{n+1}$ to $\tau_{n+1}^{r_n}$). We then define $\psi_m^n$ for $m\leq n$ as the composition $\psi_{n-1}\circ\cdots\circ \psi_m$.


We begin by specifying the tree part $\tau$ of $\tr{c}$. 
Let $\tau$ be the direct limit of $\paren{\tau_n,n\in\na}$ with respect to the mappings $\paren{\psi_n}$. 
In other words, 
$\tau$ has elements of the type $\paren{l,\sigma}$ where $\sigma\in\tau_l$ and we identify $\paren{l,\sigma}$ and $\paren{m,\tilde \sigma}$ if there exists $n\geq l,m$ such that $\imf{\psi_l^n}{ \sigma}=\imf{\psi_m^n}{ \tilde \sigma}$. 
We then deduce that the preceeding equality holds for any $n\geq l,m$. 

Let us specify the distance $d$ to be placed on $\tau$: we define it by
\begin{linenomath}\begin{esn}
\imf{d}{\paren{l,\sigma},\paren{m,\tilde \sigma}}=\imf{d_n}{ \imf{\psi_l^{n}}{\sigma},\imf{\psi_m^{n}}{\tilde \sigma} }.
\end{esn}\end{linenomath}for any $n\geq l,m$. 

The root of $\tau$ is taken as the equivalence class of $(1,\rho_1)$. 

The triple $\paren{\tau,d,\rho}$ is locally compact rooted real tree. 
Indeed, note that if $\paren{l,\sigma}$ and $\paren{m,\tilde\sigma}$ are two representatives of elements of $\tau$, 
we can embed them in $\tau_{n}$ for any $n\geq m,l$ and since $\tau_n$ is a tree, 
we can obtain an isometry of an interval which starts at $\imf{\psi_l^n}{\sigma}$ and ends at $\imf{\psi_m^n}{\tilde \sigma}$. 
Lack of loops can be proved by noting that an isometry of an interval into $\tau$ must remain bounded, 
so that it can be transformed into an isometry of an interval into $\tau_n$ for some $n$. 
Local compactness and completeness can also be proved by a similar argument. 

A total order $\leq$ on $\tau$ can be defined by stating that\begin{linenomath}\begin{esn}
\paren{l,\sigma}\leq \paren{m,\tilde\sigma}\text{ if and only if } \imf{\psi_l^n}{\sigma}\leq_n \imf{\psi_m^n}{\tilde\sigma}
\end{esn}\end{linenomath}for some $n\geq l,m$. It is easy to see that $\leq$ satisfies \defin{Or1} and \defin{Or2}.


Finally, we place the measure $\mu$ on $\tau$. 
We abuse our notation to say that $\mu_n$ is the push-forward of the measure $\mu_n$ by the mapping that sends $\sigma$ to the equivalence class of $\paren{n,\sigma}$. 
Note that $\paren{\mu_n}$ increases and so we can define the measure $\mu$ by \begin{linenomath}\begin{esn}
\mu=\lim_{n\to\infty}\mu_n. 
\end{esn}\end{linenomath}It is simple to see that $\mu$ satisfies \defin{Mes1} and \defin{Mes2}.

We end the proof by noting that $\tr{c}=\paren{\paren{\tau,d,\rho},\leq,\mu}$ is a locally compact TOM tree whose truncation at height $r_n$ is isomorphic to $\tr{c}_n$. 
\end{proof}
Let $\paren{f_n}$ be a sequence of \cadlag\ functions where $f_n$ is defined on $[0,m_n]$, $\imf{f}{0}=0$ and such that $f_n$ codes a tree $\tr{c}_n$.
\begin{definition}
We say that the sequence $\paren{f_n}$ is \defin{consistent under time change} if there exists a sequence of heights $r_1,r_2,\ldots $ increasing to infinity and such that\begin{linenomath}\begin{esn}
f_{n}=f_{n+1}\circ \imf{C^{r_n}}{f_{n+1}}.
\end{esn}\end{linenomath}
\end{definition}

From Propositions \ref{HeightProcessOfTruncatedChronologicalTreeProposition} and \ref{inductiveLimitProposition} 
we see that (the equivalence class under isomorphism of) a locally compact TOM tree  can be identified with a sequence of functions consistent under time change (and such that their induced equivalence classes have zero Lebesgue measure). 
Let us then define the function $X^r_t$ defined on TOM trees and with values in $\re\cup\set{\dagger}$ 
by stating that $\imf{X^r_t}{\tr{c}}$ is the value at time $t$ of the contour process of the truncation $\tr{c}^r$ of $\tr{c}$ at level $r$ (if the measure of $\tr{c}^r$ is greater than $t$; otherwise we define it as $\dagger$). 
We can then use the \sa\ $\F=\sag{X^r_t,t,r\geq 0}$ when discussing measurability issues for functions defined on (or with values on) locally compact TOM trees. 
\begin{corollary}
\label{existenceOfProjectiveLimitOfMeasuresCorollary}
Let $\p^r$ be a sequence of measures on Skorohod space such that the push-forward of $\p^r$ under the time change at level $r'\leq r$ gives $\p^{r'}$. 
Suppose that under $\p^r$, 
the canonical process is non-negative,  approaches zero at death time and that
the equivalence classes $[s]_X$ induced by the canonical process have zero Lebesgue measure. 
Then there exists a unique measure $\p$ on locally compact TOM trees such that, under\ $\p$ the contour process of the truncation at level $r$ has law $\p^r$. 
\end{corollary}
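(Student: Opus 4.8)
The plan is to move everything to the level of (equivalence classes of) compact TOM trees, recognize the statement as a projective limit along truncations, and resolve it by a Kolmogorov-type extension theorem. First I would push each $\p^r$ forward under the coding map $f\mapsto \tr{c}$ sending a function to the tree it codes, obtaining a measure $\kappa^r$ on $\cts_c$. The three hypotheses are exactly what makes this map defined $\p^r$-almost everywhere: non-negativity together with approaching zero at the death time $\zeta$ guarantees that $f$ is the contour of a compact TOM tree in the sense of Theorem \ref{CRTCodingTheorem}, while the requirement that the equivalence classes $[s]_X$ have zero Lebesgue measure guarantees that the pushforward of Lebesgue measure is diffuse, i.e.\ that \defin{Mes2} holds. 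I would then check that $(\kappa^{r_n})$ is consistent under truncation, and this is where Proposition \ref{HeightProcessOfTruncatedChronologicalTreeProposition} enters: since truncation of a tree at level $r'$ corresponds on contours to the time change $f\mapsto f\circ \imf{C^{r'}}{f}$, the hypothesis that the pushforward of $\p^r$ under this time change equals $\p^{r'}$ translates directly into $(\kappa^r)^{r'}=\kappa^{r'}$ for $r'\le r$.

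With consistency in hand I would realize the space of locally compact TOM trees as the projective limit of $(\cts_c,\text{truncation})$ along $(r_n)$. Concretely, Propositions \ref{HeightProcessOfTruncatedChronologicalTreeProposition} and \ref{inductiveLimitProposition} identify a locally compact TOM tree with a consistent sequence $\paren{\tr{c}_n}$ of compact TOM trees, the correspondence being a bijection whose inverse is truncation. It therefore suffices to build a measure $Q$ on the set of consistent sequences inside $\prod_n \cts_c$ whose $n$-th marginal is $\kappa^{r_n}$, and then to push $Q$ forward under the gluing map of Proposition \ref{inductiveLimitProposition}. To build $Q$ I would invoke the Daniell--Kolmogorov extension theorem, which is available because each $\cts_c$ is standard Borel: Theorem \ref{CRTCodingTheorem} embeds $\cts_c$ measurably and injectively into the Polish Skorohod space $\sko$ via the contour, and the truncation maps are measurable.

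The main obstacle is that the $\p^r$, and hence the $\kappa^r$, are in general only $\sigma$-finite, so the extension theorem does not apply verbatim and one must argue countable additivity by hand. To deal with this I would exhaust the space by the truncation-compatible events $\set{\text{height}>\eps}=\set{\sup_s \imf{f}{s}>\eps}$: whether a tree has height exceeding $\eps$ is already determined by its truncation at any level $r>\eps$, so these form a consistent increasing family whose traces on the various $\cts_c$ are compatible under truncation and (by $\sigma$-finiteness) carry finite mass, which may be normalized to probabilities. Applying Daniell--Kolmogorov on each slice and letting $\eps\downarrow 0$ assembles the $\sigma$-finite $Q$, and with it $\p$; the delicate point to verify is precisely that the probability measures produced on different slices are mutually consistent, which again reduces to Proposition \ref{HeightProcessOfTruncatedChronologicalTreeProposition}.

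Uniqueness is then the easy part. The family $\bigcup_r \sag{X^r_t:t\ge 0}$ is an increasing union of $\sigma$-algebras, since truncation at level $r$ is a function of truncation at any higher level, hence an algebra generating $\F$; on it any admissible $\p$ is pinned down by the prescribed laws $\p^r$, and the same height-threshold exhaustion supplies the sets of finite measure needed for the $\pi$--$\lambda$ uniqueness theorem for $\sigma$-finite measures. Finally, that every consistent sequence in the support of $Q$ actually comes from a genuine locally compact TOM tree, rather than a degenerate object, is guaranteed by Proposition \ref{inductiveLimitProposition}, so $\p=\Phi_* Q$ is indeed carried by $\cts$ and has truncation at level $r$ of law $\p^r$ by construction.
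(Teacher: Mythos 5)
Your route is exactly the one the paper intends: the corollary is stated without a separate proof, as a direct consequence of Propositions \ref{HeightProcessOfTruncatedChronologicalTreeProposition} and \ref{inductiveLimitProposition} (identifying locally compact TOM trees with sequences of coding functions consistent under time change) together with a Daniell--Kolmogorov/projective-limit theorem, and your pushforward-to-$\cts_c$, consistency check, and extension argument fill in precisely those steps. The one imprecision is your parenthetical claim that $\sigma$-finiteness of $\p^r$ by itself makes the truncation-compatible events $\set{\sup_s X_s>\eps}$ of finite mass --- it does not (a countable sum of unit point masses at functions of height $1$ is $\sigma$-finite but gives these events infinite measure), so this finiteness must be taken as part of the hypotheses; it does hold for the excursion measures $\nu^r$ to which the corollary is actually applied.
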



\section{Consistent families of reflected L\'evy processes}
\label{ReflectedProcessSection}
The objective of this section is to give a further study of a family of stochastic processes consistent under time-change: the reflected L\'evy processes introduced in Section \ref{IntroductionSection}. 
This will allow us, through Corollary \ref{existenceOfProjectiveLimitOfMeasuresCorollary}, to construct random locally compact TOM trees. 
In this section, we give two additional constructions of reflected L\'evy processes.
Both constructions  make sense even in the case of positive killing coefficient. 
The first construction is based on time-changes and has a pathwise time-change consistency which is useful in constructing locally compact TOM trees. 
This enables us to construct the measure $\eta^\Psi$ in the statement of Theorem 
\ref{MeasuresWithSplittingPropertyCompactCaseTheorem} in the non-compact case. 
The second construction is through Poisson random measures. 
It is a consequence of the time-change construction and is fundamental to our proof of Theorem \ref{MeasuresWithSplittingPropertyCompactCaseTheorem} in the locally compact case. 

\subsection{Reflected L\'evy processes through time-change}
\label{timeChangeSubsection}
We place ourselves on the canonical space of \cadlag\ trajectories from $[0,\infty)$ to $\re$ together with the canonical process $X=\paren{X_s,s\geq 0}$ and the canonical filtration $\paren{\F_t,t\geq 0}$. 
Let $\p_x$ be the law of a spectrally positive L\'evy process (spLp) with Laplace exponent $\Psi$ started at $x$. 
We will suppose throughout that $X$ is not a subordinator under $\p_0$.

Let us consider a reflecting threshold $r$ and define the cumulative maximum process from $r$, $\overline X^r$ given by\begin{linenomath}\begin{esn}
\overline X_t^r= \paren{\max_{s\leq t}X_s-r}^+.
\end{esn}\end{linenomath}We will write $\overline X$ when $r$ is clear from the context, as in the next definition.
Recall that a spectrally positive L\'evy process with Laplace exponent $\psi$ started at $x\leq r$ and reflected below $r$ is a stochastic process having the law $\p_x^r$ of $X-\overline X$ under $\p_x$. 
Under $\p_x^r$, the canonical process is a strong Markov process as proved in 
Proposition 1 of \cite[Ch. VI]{MR1406564}. 
We will also make use of the L\'evy process reflected below $r$ and killed upon reaching zero and denote its law by $\q_x^r$; recall that killing preserves the strong Markov property. 
The laws $\q_x^r$ were introduced in Definition 4.5.4.1 of \cite{MR2466449} since they coincide with the law of the (jump-chronological) contour process $J^r$ of a splitting tree truncated at height $r$ associated to a finite-variation spLp with Laplace exponent $\Psi$ whose progenitor has lifetime $x$. 
The relationship between the contour processes at different heights (cf. Proposition \ref{HeightProcessOfTruncatedChronologicalTreeProposition}) suggests the following corresponding property of the laws $\q_x^r$: let\begin{linenomath}\begin{esn}
A^r_t=\int_0^t \indi{X_s\leq r}\, ds. 
\end{esn}\end{linenomath}Let $C^r$ be the right-continuous inverse of $A^r$.  
\begin{proposition}
\label{ConsistencyUnderTimeChangeProposition}
If $x\leq r_1<r_2$ then the law of $X\circ C^{r_1}$ under $\p_x^{r_2}$ is $\p_x^{r_1}$.
\end{proposition}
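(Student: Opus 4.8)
The plan is to establish the identity $\p_x^{r_1} = \p_x^{r_2}\circ (X\circ C^{r_1})^{-1}$ by exploiting the relationship between reflection and the additive time-change functional $A^{r_1}$, showing that applying the time change $C^{r_1}$ to a process reflected below $r_2$ produces exactly a process reflected below $r_1$. The key observation is that the excursions of $X-\overline X^{r_2}$ that rise above the lower threshold $r_1$ are precisely the portions excised by the functional $A^{r_1}$, so that collapsing them via $C^{r_1}$ should leave the reflected path below $r_1$ intact.

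First I would reduce to a pathwise statement. Since $\p_x^{r_2}$ is the law of $X-\overline X^{r_2}$ under $\p_x$, and the time change $C^{r_1}$ is a deterministic functional of the trajectory, it suffices to understand how $Y := X-\overline X^{r_2}$ behaves under the time change based on $A^{r_1}_t = \int_0^t \si_{Y_s\le r_1}\,ds$. The heuristic is that reflecting below $r_2$ and then below $r_1$ (via the time change) should agree with reflecting below $r_1$ directly, because the higher reflection at $r_2$ acts only on the parts of the path already removed when we reflect at the lower level $r_1$. I would make this precise using the strong Markov property (Proposition~1 of \cite[Ch.~VI]{MR1406564}) and the excursion structure of the reflected process $Y$ away from $r_1$: each excursion of $Y$ above $r_1$ is, during its lifetime, an excursion of a spectrally positive L\'evy process above a level, and reflecting such an excursion below $r_2$ and then deleting it by the time change leaves no trace below $r_1$.

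The cleanest route is to recognize this as an instance of the consistency already proved at the level of trees. By Proposition~\ref{HeightProcessOfTruncatedChronologicalTreeProposition}, the contour processes of a TOM tree truncated at two levels $r_1<r_2$ are related by exactly the time change $C^{r_1}$ applied to the functional $A^{r_1}$. Since $\q_x^{r}$ (and hence $\p_x^r$, away from the killing) is the contour of a splitting tree truncated at height $r$, the truncation of a height-$r_2$ tree at height $r_1$ coincides with direct truncation at $r_1$; applying the contour map and Proposition~\ref{HeightProcessOfTruncatedChronologicalTreeProposition} then yields the claim. I would therefore argue that the strong Markov property lets us build the reflected process below $r_2$ by concatenating, at the reflection level $r_1$, independent excursions above $r_1$, and that the time change $C^{r_1}$ simply erases the time spent in these excursions, returning the reflected process below $r_1$ with the correct law.

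The main obstacle I anticipate is verifying measurability and the a.s.\ finiteness of the time-change functional, together with checking that $C^{r_1}$ genuinely removes only the super-$r_1$ excursions and does not distort the path below $r_1$ at the (countably many) jump and reflection times. Concretely, one must confirm that $A^{r_1}$ is continuous and strictly increasing on the complement of the excursion intervals, so that its right-continuous inverse $C^{r_1}$ reconstructs the sub-$r_1$ portion faithfully; here the fact that $0$ is regular for $(-\infty,0)$ for spectrally positive L\'evy processes (invoked in the proof of Proposition~\ref{levyProcessCodesTreeProposition}) ensures the reflected process immediately re-enters below the threshold, avoiding pathological intervals of constancy. Once this pathwise decomposition is secured, identifying the law reduces to the strong Markov property and the translation invariance of the excursion measure, which I expect to be routine.
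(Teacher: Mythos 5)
Your overall strategy --- compare the two processes via excursion theory away from the level $r_1$, using that the sub-$r_1$ portions of the path are unaffected by either the reflection at $r_2$ or the time change --- is indeed the strategy of the paper's proof. However, there are two genuine problems with the way you propose to carry it out.

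First, your ``cleanest route'' through Proposition \ref{HeightProcessOfTruncatedChronologicalTreeProposition} is circular. That proposition is a deterministic statement: if $f$ codes $\tr{c}$ then $f\circ C^{r_1}$ codes the truncation $\tr{c}^{r_1}$. Applied here it tells you that $X\circ C^{r_1}$ under $\p_x^{r_2}$ is the contour of the truncation at $r_1$ of the tree coded under $\p_x^{r_2}$; it does \emph{not} tell you that this law is $\p_x^{r_1}$ --- that identification of laws is precisely the content of the proposition you are proving. The splitting-tree interpretation of $\q_x^r$ from \cite{MR2466449} that you invoke is only available for finite-variation L\'evy processes, and the paper explicitly notes that \cite{MR2599603} covers exactly that case; the point of Proposition \ref{ConsistencyUnderTimeChangeProposition} is the general (infinite-variation, possibly killed) case. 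Relatedly, the identity is not pathwise: reflecting at $r_1$ and time-changing the reflection at $r_2$ act on \emph{different} portions of the underlying L\'evy path after the first passage above $r_1$, so ``reducing to a pathwise statement'' is not available; only the laws agree.

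Second, the step you defer as ``routine'' is where all the mathematical content lies, and it is exactly where spectral positivity must be used. The paper's own counterexample (the difference of two independent stable subordinators, for which points are polar and the statement \emph{fails}) shows that some hypothesis-specific work is unavoidable. Concretely, to reconstruct both laws from their It\^o excursion decompositions at $r_1$ one must prove (i) that both processes spend zero Lebesgue time at $r_1$, so the inverse local times have zero drift --- the paper does this via the Laplace exponent $\alpha/\imf{\Phi}{\alpha}$ of the ladder time process and, for the reflected process, via the Pe{\v{c}}erski{\u\i}--Rogozin formula showing that $X_T-\overline X_T$ is infinitely divisible with infinite L\'evy measure and hence non-atomic; and (ii) that the two excursion measures below $r_1$ coincide, which requires characterizing a $\sigma$-finite excursion measure by its conditional laws $\nu_\eps$ given reaching depth $-\eps$ and identifying both conditional laws with the image of $\p_{-\eps}$ killed upon becoming positive (using the absence of negative jumps so that excursions below $r_1$ begin continuously at $r_1$). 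Your draft gestures at (ii) but supplies neither the characterization nor the identification, and omits (i) entirely; as written the argument does not close.
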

By stopping at the first hitting time of zero, we obtain:
\begin{corollary}
\label{ConsistencyUnderTimeChangeCorollary}
If $0\leq x\leq r_1<r_2$ then the law of $X\circ C^{r_1}$ under $\q_x^{r_2}$ is $\q_x^{r_1}$.
\end{corollary}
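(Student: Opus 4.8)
The plan is to deduce Corollary \ref{ConsistencyUnderTimeChangeCorollary} directly from Proposition \ref{ConsistencyUnderTimeChangeProposition}, exploiting the fact that the time-change $C^{r_1}$ commutes appropriately with the operation of killing at the first hitting time of zero. The key observation is that the law $\q_x^r$ is obtained from $\p_x^r$ by killing the process (sending it to $\dagger$) when it first reaches $0$, and that this killing time is respected by the time-change because $A^{r_1}$ only slows down time (it never speeds it up), so the reflected process below $r_1$ hits zero precisely when its slowed-down counterpart does.

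First I would set $T_0 = \inf\set{t\geq 0: X_t = 0}$, the first hitting time of zero on canonical space, so that by definition $\q_x^r$ is the push-forward of $\p_x^r$ under the map $f\mapsto f^{T_0}$ that kills the trajectory at $T_0$ (sending it to the cemetery state thereafter). Under $\p_x^{r_2}$, Proposition \ref{ConsistencyUnderTimeChangeProposition} tells us that $X\circ C^{r_1}$ has law $\p_x^{r_1}$. The content of the corollary is then that killing commutes with the time-change: I would show that $(X\circ C^{r_1})$ killed at its first hitting time of zero is the same trajectory as $(X^{T_0})\circ C^{r_1}$, where in the latter expression the time-change $C^{r_1}$ is computed relative to the killed path (equivalently, relative to the unkilled path up to $T_0$, since $A^{r_1}$ depends only on values of $X$, and killing at $T_0$ does not affect the path before $T_0$).

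The main step is the pathwise identity relating the hitting time of zero before and after the time change. Since $A^{r_1}_t = \int_0^t \indi{X_s\le r_1}\,ds$ is nondecreasing and continuous, its right-continuous inverse $C^{r_1}$ satisfies $X\circ C^{r_1}$ reaching a given level at time $u$ iff $X$ reaches that level at time $C^{r_1}_u$. In particular the first zero of $X\circ C^{r_1}$ occurs at $u_0 = A^{r_1}_{T_0}$, with $C^{r_1}_{u_0}=T_0$; here I would use the fact that $X$ stays at or below $r_1$ in a neighborhood to the left of $T_0$ (because $X$ is continuous from the left at a downward passage to $0\le r_1$, and spectrally positive so it hits $0$ continuously by the regularity of $0$ for $(-\infty,0)$ noted earlier), so that $A^{r_1}$ is strictly increasing just before $T_0$ and the inverse relationship is exact. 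This yields that killing the time-changed process at $u_0$ agrees with time-changing the killed process, so both sides of the claimed identity in law coincide.

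I expect the main obstacle to be the careful bookkeeping at the hitting time of zero, specifically verifying that $A^{r_1}$ is genuinely increasing in a left neighborhood of $T_0$ so that $C^{r_1}_{u_0}=T_0$ holds exactly and no mass of time near the zero-crossing is ``skipped'' by the time change. This is where spectral positivity is essential: an spLp that is not a subordinator approaches low levels continuously (by regularity of $0$ for $(-\infty,0)$, Thm.\ 1 of \cite[Ch.\ VII]{MR1406564}), so $X_s\le r_1$ on an interval abutting $T_0$ from the left, and $A^{r_1}$ increases there. Granting this, the corollary follows by applying Proposition \ref{ConsistencyUnderTimeChangeProposition} and then pushing both laws forward through the (now commuting) killing operation, using that the reflection below $r_1$ and the reflection below $r_2$ coincide on the portion of the path staying below $r_1$, which is all that survives up to $T_0$ under $\q_x^{r_1}$.
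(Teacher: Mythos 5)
Your proof is correct and takes essentially the same route as the paper, which deduces the corollary from Proposition \ref{ConsistencyUnderTimeChangeProposition} with the single remark ``by stopping at the first hitting time of zero.'' Your proposal merely supplies the bookkeeping the paper leaves implicit --- that the absence of negative jumps forces $X_{T_0-}=X_{T_0}=0$, so $A^{r_1}$ is strictly increasing on a left neighborhood of $T_0$, the first zero of $X\circ C^{r_1}$ occurs exactly at $A^{r_1}_{T_0}$, and killing therefore commutes with the time change.
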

Indeed, \cite{MR2599603} proves Corollary \ref{ConsistencyUnderTimeChangeCorollary} when the underlying L\'evy process is of finite variation. 
We are interested in extending the consistency under time-change in order to construct random locally compact TOM trees out of the trajectories of more general L\'evy processes.

In the Brownian case (when $\imf{\Psi}{\lambda}=\lambda^2/2$), 
the equality in law between $X\circ C^{0}$ under $\p_x^{r_2}$ and $\p_x^{r_1}$ is reduced to the equality in law between Brownian motion reflected below its supremum 
and Brownian motion time changed by $C^0$ to remain negative. 
This can be proved using Tanaka's formula, Knight's Theorem and Skorokhod's reflection lemma. 
These tools are not available for general spectrally positive L\'evy processes. 
However, this already tells us that some  local time argument should be involved in the proof. 
Indeed, our argument is based on excursion theory for Markov processes, in which local time plays a fundamental role. 

Proposition \ref{ConsistencyUnderTimeChangeProposition} is a generalization of the results for Brownian motion or finite variation L\'evy processes. 
The only caveat is to see that the equality between time-change and reflection holds as a consequence of the assumed lack of negative jumps. 
For example, if $\p_0$ is the law of the difference of two independent stable subordinators of index $\alpha\in (0,1)$, then under $\p_x^{r_1}$, the barrier $r_1$ is touched. 
However, since points are polar  (see for example \cite[p. 63]{MR1406564}) then $X\circ C^{r_1}$ does not touch the barrier $r_1$ almost surely under $\p_x^{r_2}$ so that no equality in law can be valid in this case.

\begin{proof}[Proof of Proposition \ref{ConsistencyUnderTimeChangeProposition}]
Recall that the time-changed processes $X\circ C^{r_1}$ under $\p_x^{r_2}$ are also strongly Markovian, as shown for example in \cite{MR0193671} or \cite{MR958914}. 

The argument is based on excursion theory, as introduced in \cite[Ch. 4]{MR1406564}. 
Indeed, we prove that the (regenerative) sets of times at which the two (strong Markov) processes visit $r_1$, being the images of subordinators as in \cite[Ch. 2]{MR1746300}, have the same drift (zero in this case), and that the excursions of both processes below $r_1$ admit the same stochastic descriptions. 
To see that the drift is equal to zero, by Proposition 1.9 of \cite{MR1746300}, it suffices to see that the sets of times at which the two processes visit $r_1$ have zero Lebesgue measure almost surely. 

The law of $X$ under $\p_x^{r_1}$ until hitting $r_1$ is equal to the law of $X$ under $\p_x$ until it surpasses $r_1$. After $X$ hits $r_1$ under $\p_x^{r_1}$, the process has law $\p_{r_1}^{r_1}$, which is the push-forward of $\p_0^0$ under $f\mapsto r_1+f$. 
Similarly, $X$ equals $X\circ C^{r_1}$ until $X$ surpasses $r_1$, and under $\p_x^{r_2}$, these processes have the law of $X$ under $\p_x$ until it surpasses $r_1$. After $X\circ C^{r_1}$ hits $r_1$, it has the law under $\p_{x}^{r_2}$ of $X\circ C^{r_1}$ under $\p_{r_1}^{r_2}$, which by spatial homogeneity is the image of the law of $X\circ C^0$ under $\p_{0}^{r_2}$. 
Hence, it suffices to focus on the case $0=x=r_1<r_2$; set $r_2=r$ to simplify notation. 


Formula 9.2.9 of \cite[p. 98]{MR2320889} implies that the Laplace exponent of the upward ladder time process of $X$ under $\p_0$ is equal to $\alpha/\imf{\Phi}{\alpha}$ where $\Phi$ is the right-continuous inverse of $\Psi$ 
and since $\p_0$ is not the law of a subordinator, then $\imf{\Phi}{\alpha}\to\infty$ as $\alpha\to\infty$ implying that the drift of the upward ladder time process, 
which is equal to $\lim_{\alpha\to\infty} 1/\imf{\Phi}{\alpha}$, 
is zero. 
Hence, the set of times $X$ is at its supremum under $\p_0$ has zero Lebesgue measure almost surely and so $\int_0^\infty \indi{X_s=0}\, ds=0$ almost surely under $\p_0^0$.

Let us obtain a corresponding statement for $X\circ C^0$ under $\p_0^r$. 
Since the time-change just removes parts of the trajectory, 
we are reduced to proving that $X$ spends zero time at $0$ under $\p_0^r$ which is equivalent 
to proving that $X-\overline X^r$ spends zero time at zero under $\p_0$. 
Note however that $X$ spends zero time at $0$ under $\p_0$, even if $X$ is compound Poisson minus a drift since the drift is forced to be nonzero having excluded subordinators. 
Hence, until the trajectory of $X-\overline X^r$ reaches $r$, it spends zero time at zero under $\p_0$. 
After this process reaches $r$, its law is that of $r+X-\overline X$ under $\p_0$ .
Note that
\begin{linenomath}
\begin{esn} 
0=
\imf{\se_{\p_0}}
{\int_0^\infty \indi{X_t-\overline X_t=-r}\, dt}
\text{ if and only if   }
0=\imf{\se_{\p_0}}{\int_0^\infty\lambda e^{-\lambda t} \indi{X_t-\overline X_t=-r}\, dt}. 
\end{esn}\end{linenomath}Hence, we now show that if $T$ is a standard exponential time independent of $X$ then $X_T-\overline  X_T$ is almost surely not $-r$. 
By the Pe{\v{c}}erski{\u\i}-Rogozin formula (cf. Th. 5 in \cite[Ch. VI]{MR1406564} or Equation (5) in \cite{MR2978134}), we know that $X_T-\overline X_T$ is a negative infinitely divisible random variable with zero drift and L\'evy measure equal to\begin{linenomath}\begin{esn}
\imf{\nu}{A}=\int_0^\infty \frac{e^{-t}}{t}\imf{\p_0}{X_t\in A\cap (-\infty,0)}\, dt. 
\end{esn}\end{linenomath}Recall that since $X$ is not a subordinator under $\p_0$ then  $0$ is regular for $(-\infty,0)$, as verified in Theorem 1 of \cite[Ch. VII]{MR1406564}. 
By Rogozin's criterion for regularity (cf. Proposition 11 of \cite[Ch. VI]{MR1406564} or equation (8) in \cite{MR2978134}), we then see that\begin{linenomath}\begin{esn}
\imf{\nu}{(-\infty,0)}=\int_0^\infty \frac{e^{-t}}{t}\imf{\p_0}{X_t<0}\, dt=\infty. 
\end{esn}\end{linenomath}Since $\nu$ is an infinite measure  the law of $X_T-\overline X_T$ has no atoms under $\p_0$ (cf. Theorem 27.4 in \cite{MR1739520}). 


Since the laws $\p_0^r$ and $\p_0^0$ are strongly Markovian, the sets\begin{linenomath}\begin{esn}
\set{t\geq 0:X\circ C^0_t=0}\quad\text{and}\quad \set{t\geq 0:X_t=0}
\end{esn}\end{linenomath}are regenerative under $\p_0^r$ and $\p_0^0$; 
we have just proved that in both cases their corresponding inverse local time at zero 
has zero drift. 

We now describe the excursions below $0$. 
Let $\mathbf{E^-}$ be the set of negative excursions. 
That is, the set of functions $\fun{f}{[0,\infty)}{(-\infty,0]\cup\set{\dagger}}$ for which there exists $\zeta=\imf{\zeta}{f}>0$, termed the lifetime, such that $f$ is strictly negative  and \cadlag\ with no negative jumps on $(0,\zeta)$ and equal to $\dagger$ after $\zeta$; hence, $\dagger$ is again interpreted as a cemetery state. The set
$\mathbf{E^-}$ is equipped with the natural filtration of the canonical process. 
Since we have proved that the canonical process $X$ under $\p_x^{r_1}$ 
(resp. $X\circ C^{r_1}$) under $\p_x^{r_2}$) spends zero time at $r_1$, excursion theory tells us that $X$ under $\p_x^{r_1}$ has the same law as the stochastic process $Z$ given by 
\begin{linenomath}
\begin{esn}
Z_t=\sum_{i}\indi{\tau_{\Lambda_i-}\leq t<\tau_{\Lambda_i}} \imf{E_i}{t-\tau_{\Lambda_i-}},
\end{esn}\end{linenomath}where the random measure $\sum_{i}\delta_{(\Lambda_i,E_i)}$ is a Poisson random measure on $[0,\infty)\times \mathbf{E^-}$ with intensity $\leb\otimes \nu$ for some $\sigma$-finite measure $\nu=\nu^0$ (resp. $\nu=\nu^r$) on $\mathbf{E^-}$ and
\begin{linenomath}
\begin{esn}
\tau_l=\sum_{\Lambda_i\leq l}\imf{\zeta}{E_i}. 
\end{esn}\end{linenomath}The measure $\nu^0$ (resp. $\nu^r$) is characterized, 
up to a multiplicative constant which does not affect the law of $Z$, 
as follows. 
Let $T_{\eps}$ denote the hitting time of $-\eps$ by the canonical process. 
Since functions in $\mathbf{E^-}$ start at zero 
and leave 0 immediately, 
any measure on $\mathbf{E^-}$ is determined by its values on 
$\cup_{\eps>0}\sag{X_{t+T_{\eps}}: t\geq 0}$. 
Let $\tilde \p_0^r$ be the law of $X\circ C^0$ under $\p_0^r$. 
Let\begin{linenomath}\begin{esn}
\mc{O}=\set{t\geq 0: X_t<0}.
\end{esn}\end{linenomath}Since $X$ has non-negative jumps under $\p_0^0$ 
and $\tilde \p_0^{r}$ 
then $\mc{O}$ is almost surely open under both measures. 
Its connected components are termed the excursion intervals of $X$ below $0$. 
If $(a,b)$ is a connected component of $\mc{O}$ then the non-negativity of jumps forces $X_{a}=0$, and so the path $e$ given by $e_s=X_{a+s}$ if $0\leq s<b-a$ and $e_s=\dagger$ if $s\geq b-a$ is an element of $\mathbf{E^-}$. 
The depth of the excursion $e$ is defined as $\min_{s<b-a} e_s$. 
For any $\eps>0$, there is a well-defined sequence of successive excursions of depths falling below $-\eps$ which are iid with common laws $\nu^0_\eps$ 
(respectively $\nu^r_\eps$) under $\p_0^0$ (respectively under $\tilde\p_0^r$); 
this follows from the strong Markov property under $\p_0^0$ and $\tilde \p_0^r$. 
Then, the intensity measure $\nu^0$ is the unique $\sigma$-finite measure
(up to a multiplicative constant that does not affect the law of $Z$) 
such that $\nu^0$ is finite on the set of excursions falling below $-\eps$ and $\nu^0_\eps$ equals $\nu^0$ conditioned on falling below $-\eps$. 
Indeed, the measure $\nu^0$ can be defined by\begin{linenomath}\begin{esn}
\nu^0=\lim_{\eps\to 0} \frac{\nu^\eps}{\imf{\nu^\eps}{\text{depth}< 1}}.
\end{esn}\end{linenomath}
We construct $\nu^r$ in the same manner, based on the probability measures $\nu_{\eps}^r$ giving us the law of successive excursions of $X\circ C^0$ below zero of depth below $-\eps$ under $\p_0^r$. 
Hence, to prove that $\nu^0=\nu^r$, it suffices to see that $\nu^0_\eps=\nu^r_\eps$.
Let us describe the left-hand side: 
under $\p_0$, consider the process $X\circ \theta_{T_\eps}$ until it becomes positive, 
after which we send it to $\infty$; this law equals the image $\p_{-\eps}$ by killing upon becoming positive by lack of positive jumps and this is exactly the measure $\nu^0_\eps\circ \theta_{T_\eps}^{-1}$. 
However, the law $\nu^r_\eps\circ\theta_{T_\eps}$ admits the same description. 
Indeed, it suffices to note that if we define the stopping time\begin{linenomath}\begin{esn}
T^0_\eps=\inf\set{t\geq 0:X\circ C^0_t=-\eps}
\end{esn}\end{linenomath}for the process $X\circ C^0$ and note that the process $X(C^0_{T^0_\eps+\cdot})$  is equal to $X_{T_\eps+t}$ until the latter becomes positive.  
Hence $\nu^0=\nu^r$ and so $\p_0^0$ equals the law of $X\circ C^0$ under $\p_0^r$. 
Note that the proof remains valid even in the subcritical case where there are excursions of infinite length. 
\end{proof}
As mentioned in \cite{MR2466449}, Corollary \ref{ConsistencyUnderTimeChangeCorollary} and Kolmogorov's consistency theorem 
imply the existence of  a doubly indexed process $X^{r}_t$ such that $X^r$ has law $\p_x^r$ for every $r\geq x$ and which is consistent under time-change in the following pathwise sense:\begin{linenomath}\begin{esn}
X^r=X^{r'}\circ C^{r',r}
\end{esn}\end{linenomath}for $r<r'$ where $C^{r',r}$ is the inverse of $A^{r',r}$ and\begin{linenomath}\begin{esn}
A^{r',r}_t=\int_0^t \indi{X^{r'}_s\leq r}\, ds.
\end{esn}\end{linenomath}We now give a representation of this doubly indexed process by time-changing independent copies of a L\'evy process. 

%
Let $X^1,X^2,\ldots$ be independent copies of $X$ and define: \begin{linenomath}\begin{esn}
T_1=\inf\set{t\geq 0:x+X^1_t=0},\quad T_n=\inf\set{t\geq 0:r+X^n_t=0}\text{ for $n\geq 2$}
\end{esn}\end{linenomath}as well as\begin{linenomath}\begin{esn}
A^{1}_t=\int_0^t\indi{x+X^1_{s\wedge T_1}\leq r}\, ds
\end{esn}\end{linenomath}and, for $n\geq 2$,\begin{linenomath}\begin{esn}
A^n_t=\int_0^t\indi{r+X^n_{s\wedge T_n}\leq r}\, ds,
\end{esn}\end{linenomath}Let $C^n$ be the right continuous inverse of $A^n$ for any $n\geq 1$. 
Consider the process\begin{equation}
\label{timeChangeConstructionOfRelfectedLevyProcessEquation}
Y^r_t=\begin{cases}
x+X^1\circ\alpha^1_t&\text{if $t<C^1_\infty$}\\
r+X^n\circ\alpha^n_{t-C^1_\infty-\cdots-C^{n-1}_\infty}&\text{if $C^1_\infty+\cdots+C^{n-1}_\infty\leq t<C^1_\infty+\cdots+C^{n}_\infty$ for $n\geq 2$}
\end{cases},
\end{equation}which, by construction, is consistent under time-change. 
\begin{proposition}
\label{timeChangeConstructionOfReflectedAndKilledLevyProcessProposition}
The process $Y^r$ has law $\q_x^r$.
\end{proposition}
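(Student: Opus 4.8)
The plan is to read off the law of each time-changed, concatenated block of $Y^r$ from the consistency under time-change already established, and then to check that the resulting concatenation is exactly the definition of $\q_x^r$. The one input not yet isolated is the degenerate case $r_2=\infty$ of Proposition \ref{ConsistencyUnderTimeChangeProposition}, in which $\p_x^{r_2}$ is replaced by the law $\p_x$ of the free process: I claim that $X\circ C^r$ under $\p_x$ has law $\p_x^r$, i.e.\ that deleting from the clock the time spent strictly above $r$ turns a free spLp into one reflected below $r$. This is proved by the very argument of Proposition \ref{ConsistencyUnderTimeChangeProposition}, whose only probabilistic ingredient is that $X$ spends zero time at its running supremum under $\p_x$ (established there); alternatively one lets $r_2\to\infty$ and uses that $\p_x^{r_2}$ and $\p_x$ agree on the event that the path stays below $r_2$.

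With this in hand I would first analyze the initial block $Y^r_t=x+X^1\circ C^1_t$, for $t<C^1_\infty$. Because $X^1$ is frozen at the hitting time $T_1=\inf\{t:x+X^1_t=0\}$ of zero before being time-changed, and because $A^1$ only accumulates the time spent at or below $r$, the block $x+X^1\circ C^1$ is the free process $x+X^1$ reflected below $r$ and then stopped (equivalently killed) on reaching zero. By the $r_2=\infty$ case above, its law is that of the reflected spLp started at $x$ and run until the first of the two times ``hits zero'' or ``is killed by the intrinsic killing of $X$''. On the event that zero is reached before killing, one has $C^1_\infty=\infty$ --- after $T_1$ the value is frozen at $0\le r$, so $A^1$ grows to infinity and the first block has infinite length --- whence $Y^r$ reduces to this single reflected-killed-at-zero path and already has law $\q_x^r$. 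In particular this settles the (sub)critical unkilled case, where $X^1$ reaches zero almost surely and no concatenation occurs.

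It remains to treat the killed case, on the event that the intrinsic killing sends $X^1$ to $\dagger$ while the reflected path is still positive, so that the first block has finite length $C^1_\infty$. Then, by construction, $Y^r$ continues on the next interval as $r+X^2\circ C^2$, an independent copy of the same reflected process but started afresh at the barrier $r$; iterating \eqref{timeChangeConstructionOfRelfectedLevyProcessEquation} exhibits $Y^r$ as the concatenation of independent reflected spLp's, each run until it hits zero or is killed, the first started at $x$ and all later ones at $r$, the concatenation halting as soon as one block reaches zero. This is word for word the definition of $\q_x^r$ recalled in Section \ref{IntroductionSection}: reflect $X-\overline X^r$, kill at zero, and ``concatenate independent copies started at $r$ until a copy reaches zero''. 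Since in the killed case each block reaches zero before being killed with positive probability, only finitely many blocks are used almost surely, so $Y^r$ is well defined; combining the per-block identifications with the independence of the $X^n$ and the strong Markov property of the reflected-and-killed process yields $Y^r\sim\q_x^r$.

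The step I expect to require the most care is the bookkeeping of the killed case: one must verify that the intrinsic killing of $X^n$ is transported by the time-change $C^n$ into killing of the corresponding reflected block at precisely its reflected-clock length $C^n_\infty$, and that restarting at the barrier $r$ --- rather than at the position occupied by the reflected path at the killing instant --- is both what \eqref{timeChangeConstructionOfRelfectedLevyProcessEquation} produces and what the definition of $\q_x^r$ demands. The only other point needing justification is the passage to $r_2=\infty$ in Proposition \ref{ConsistencyUnderTimeChangeProposition}, handled as indicated in the first paragraph.
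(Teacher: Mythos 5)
Your (sub)critical case is correct and coincides with the paper's first step: there $T_1<\infty$ almost surely, a single block occurs, and the $r_2=\infty$ analogue of Proposition \ref{ConsistencyUnderTimeChangeProposition} (valid in that case by the same excursion argument) identifies its law with $\q_x^r$.

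The supercritical case has a genuine gap, and it sits exactly where you flag the need for care. First, your dichotomy ``hits zero'' versus ``intrinsic killing'' omits the supercritical unkilled case ($\kappa=0$, $\imf{\Psi'}{0+}<0$): with positive probability a copy neither reaches zero nor jumps to $\dagger$ but drifts to $+\infty$, and its block still ends after the finite time $A^n_\infty=\int_0^\infty\indi{r+X^n_{s\wedge T_n}\leq r}\,ds$, so concatenation is needed there as well. Second, and more fundamentally, the $r_2=\infty$ extension of Proposition \ref{ConsistencyUnderTimeChangeProposition} on which each per-block identification rests is \emph{false} for supercritical $\Psi$: under $\p_x$ the time-changed path $X\circ C^r$ has finite lifetime (it expires once the free path permanently leaves $(-\infty,r]$, by killing or by transience), whereas under $\p_x^r$ the canonical process keeps making excursions below $r$; this mismatch is the very reason the concatenation is introduced. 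A block of $Y^r$ is therefore not ``a reflected spLp run until it hits zero or is killed'', and the concatenation is not ``word for word the definition of $\q_x^r$''. Concretely, for supercritical unkilled $\Psi$ the process under $\q_r^r$ reaches zero in a single run (the drawdown of $X$ exceeds $r$ almost surely), while $Y^r$ glues together a geometric number of time-changed free copies; that these two objects have the same law is the content of the proposition, not a restatement of its definition. The paper closes the argument globally rather than block by block: it forms the unstopped concatenation $\tilde Y^r$, checks that it is strongly Markovian, spends zero Lebesgue time at $r$, and has excursions below $r$ of intensity $\nu^0$ (an excursion of some $X^i$, once it reaches $r-\eps$, continues as $\p_{r-\eps}$ killed upon entering $(r,\infty)$), and then invokes the uniqueness part of the excursion argument of Proposition \ref{ConsistencyUnderTimeChangeProposition} to conclude that $\tilde Y^r$ has law $\p_r^r$ and hence $Y^r$ has law $\q_x^r$. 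Some such argument about the concatenated process as a whole is unavoidable; the block-by-block reading cannot supply it.
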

\begin{proof}
Note that if $X$ is (sub)critical, 
then $C^1_\infty=\infty$ since $T<\infty$;
hence, we only need $X^1$ in the definition of $Y^r$. 
But then, $Y^r$ coincides with $X\circ C^r$ under $\q_x^r$, so that the result is valid in this case. 

It remains to consider the supercritical case. 
Also, before $Y^r$ hits $r$ it has the same law  as the L\'evy process until it exceeds $r$, so that it suffices to consider the case $x=r$.

If $\tilde Y^r$ admits the same construction as $Y^r$ but without killing upon reaching zero (that is, we use $\tilde A^n_t=\int_0^t\indi{r+X^n_{s}\leq r}\, ds$ instead of $A^n$),  we now prove that $\tilde Y^r$ has law $\p_r^r$, which proves the stated result.

The process $\tilde Y$ is strongly Markovian and, using the proof of Proposition \ref{ConsistencyUnderTimeChangeProposition}, one sees that 
\begin{linenomath}\begin{esn}
\int_0^t \indi{Y_s^r=r}\, ds=0
\end{esn}\end{linenomath}almost surely. 

Also, for each $\eps>0$, the successive excursions of $Y^r$ reaching $r-\eps$ are also the excursions of some $X^i$ (below $r$ and reaching $r-\eps$). 
After they reach $r-\eps$ their law is the image of  $\p_{r-\eps}$ by killing upon reaching $(r,\infty)$. 
Therefore, the intensity of excursions of $Y^r$ below $r$ is $\nu^0$. 
By the arguments of the proof of Proposition \ref{ConsistencyUnderTimeChangeProposition}, 
we see that $Y^r$ has law $\p_r^r$.
\end{proof}

\subsection{Poissonian construction of reflected L\'evy processes}
\label{poissonianConstructionSubsection}
We now present a Poissonian construction of reflected L\'evy processes which will be useful in 
establishing Theorem \ref{MeasuresWithSplittingPropertyCompactCaseTheorem} in the locally compact case. 

Let $\Psi$ be the Laplace exponent of a spectrally positive L\'evy process which is not a subordinator 
and let $\Phi$ be its right-continuous inverse.
Let $d$ be the drift of $\Phi$, so that\begin{linenomath}\begin{esn}
d=\lim_{\lambda\to\infty}\frac{\imf{\Phi}{\lambda}}{\lambda}
\end{esn}\end{linenomath} 

As before, we let $\p_x^r$ be the law of a $\Psi$-L\'evy process reflected under $r>0$ and started at $x\leq r$. 
Suppose for the moment that $\Psi$ is subcritical or critical (that is, $\imf{\Psi'}{0+}\geq 0$). 
Then we can perform the following Poissonian construction of $\p_x^r$: 
let $\nu$ be the excursion measure of $X-\underline X_t$ under $\p_0$ and define 
$\nu^r$ as the push-forward of $\nu$ by the truncation operator above $r$ (which sends $f$ to $f\circ \imf{C^r}{f}$). 

Let\begin{linenomath}\begin{esn}
\Xi^r=\sum_{n}\delta_{\paren{s_n,f_n}}
\end{esn}\end{linenomath}be a Poisson random measure on $[0,\infty)\times \mathbf{E}$ with intensity $\imf{\leb}{ds}\otimes \imf{\nu^{r-x+s}}{df}$ and define\begin{equation}
\label{ReconstructionReflectedLevyProcessFromExcursionsEquation}
S^r_t=d t+\sum_{s_n\leq t}\imf{\zeta}{f_n},\quad
M^r=-\paren{S^r}^{-1}
\quad\text{and}\quad
W_t=x+M^r_t+\sum_{n}\indi{S^r_{s_n-}\leq t<S^r_{s_n}}\imf{f_n}{t-S^r_{s_n-}}
\end{equation}
\begin{proposition}
\label{poissonianConstructionProposition}
For a (sub)critical Laplace exponent $\Psi$, 
the law of $W$ is $\p_x^r$.
\end{proposition}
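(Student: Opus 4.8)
The plan is to realize $W$ as the contour of the non-truncated tree read along the time-change $C^r$, and then to identify the latter with a reflected L\'evy process. First I would fix a Poisson random measure $\Xi=\sum_n\delta_{(s_n,g_n)}$ on $[0,\infty)\times\mathbf{E}$ with intensity $\leb(ds)\otimes\nu(df)$ and let $Y$ be the process synthesised from $\Xi$ exactly as in the proof of Theorem \ref{MeasuresWithSplittingPropertyCompactCaseTheorem}, but started at $x$: with $S_l=dl+\sum_{s_n\le l}\zeta(g_n)$ and $\Lambda=S^{-1}$, set $Y_t=x-\Lambda_t+\sum_n\indi{S_{s_n-}\le t<S_{s_n}}g_n(t-S_{s_n-})$. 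Here the drift $d=\lim_{\lambda\to\infty}\Phi(\lambda)/\lambda=\lim_{\lambda\to\infty}\lambda/\Psi(\lambda)$ is exactly the sojourn $a$, so the backbone of $Y$ coincides with the drift appearing in $S^r$. Since $\nu$ is the genuine excursion measure of the $\Psi$-process above its cumulative minimum and $d$ is the drift of the inverse local time at the minimum, It\^o's synthesis theorem (\cite{MR0402949}) identifies $Y$ with the canonical process under $\p_x$; alternatively, this is the L\'evy-process identification carried out in the proof of Theorem \ref{MeasuresWithSplittingPropertyCompactCaseTheorem}, here specialised to the excursion measure of $\Psi$.

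The key step will be the pathwise identity $W=Y\circ C^r$. Because $x\le r$ and $-\Lambda$ is non-increasing, the backbone $x-\Lambda_t$ never exceeds $r$, so the time-change $C^r$, which deletes $\{t:Y_t>r\}$, removes none of the backbone and only trims the grafted excursions. The excursion $g_n$ is grafted from the backbone height $x-s_n$, so $Y$ exceeds $r$ during it exactly where $g_n$ exceeds $r-x+s_n$; deleting these times replaces $g_n$ by its truncation $g_n\circ C^{r-x+s_n}(g_n)$ while leaving the abscissa $s_n$ and the drift contribution to $S$ unchanged. Hence $Y\circ C^r$ has precisely the form \eqref{ReconstructionReflectedLevyProcessFromExcursionsEquation}, driven by the image of $\Xi$ under $(s,f)\mapsto(s,f\circ C^{r-x+s}(f))$. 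By the mapping theorem for Poisson random measures this image is a Poisson random measure with intensity $\leb(ds)\otimes\nu^{r-x+s}(df)$, which is the law of $\Xi^r$; therefore $W$ and $Y\circ C^r$ share the same law.

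It then remains to identify the law of $Y\circ C^r$. As $Y\sim\p_x$, this amounts to the statement that a spectrally positive L\'evy process, time-changed so as to remove the time spent above $r$, is the process reflected below $r$; this is the reflection-equals-time-change identity underlying Proposition \ref{ConsistencyUnderTimeChangeProposition}, valid here by the absence of negative jumps. It is the limiting $r_2\uparrow\infty$ case of that proposition, legitimate because in the (sub)critical regime all excursions of $\nu$ are finite and $\p_x^{r_2}\to\p_x$ on every finite horizon while $C^r$ depends measurably on the path. Chaining the three steps yields $W\sim\p_x^r$.

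The hard part will be the bookkeeping behind $W=Y\circ C^r$: one must verify that removing the time above $r$ acts on the It\^o-synthesis data exactly by the excursion truncation $f\mapsto f\circ C^{r-x+s}(f)$ at abscissa $s$, disturbing neither the abscissae nor the drift of $S$. This rests on the fact, already visible in the deterministic representation \eqref{deterministicItoRepresentation}, that the abscissa of a grafted subtree is the measure of a left-set and is hence stable under truncation of the tree above the grafting level.
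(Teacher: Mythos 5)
Your argument is correct and is essentially the paper's own proof run in the opposite direction: the paper decomposes $X$ under $\p_0$ into its excursion point process above the running minimum and checks that $X\circ C^r$ is reconstructed from the truncated excursions exactly as in \eqref{ReconstructionReflectedLevyProcessFromExcursionsEquation}, whereas you synthesize $Y\sim\p_x$ from the Poisson data and push it forward through $C^r$; both hinge on the same two facts, namely that $C^r$ acts on the It\^o decomposition by truncating each excursion at level $r-x+s$ while preserving abscissae and the drift of $S$, and that $X\circ C^r$ under $\p_x$ has law $\p_x^r$. No gap.
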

\begin{proof}
It suffices to consider the case $x=0$ because the push-forward of $\p_0^{r-x}$ by the operator which adds $x$ to a given function is $\p_x^{r}$. 

Recall that under $\p_0$, $-\underline X$ is the local time of $X-\underline X$ at zero and its right-continuous inverse, say $S$, is a subordinator with Laplace exponent $\Phi$, as shown in \cite[Ch. VII]{MR1406564}. 
Also, the point process of excursions of $X-\underline X$ above zero  under $\p$, say\begin{linenomath}\begin{esn}
\Xi'=\sum_{n}\delta_{\paren{s_n',f_n'}},
\end{esn}\end{linenomath}is a Poisson point process on $[0,\infty)\times \mathbf{E}$ with intensity $\imf{\leb}{ds}\otimes \imf{\nu}{df}$. 
Finally, since excursion intervals become jumps of $S$ and $S$ is a subordinator with drift $d$, $X$ and $S$ can be recovered from $\Xi'$ and $d$ by means of the formulae
\begin{linenomath}\begin{align*}
S_t&=d t+\sum_{s'_n\leq t}\imf{\zeta}{f'_n}
, \quad
-\underline X
=S^{-1}
\intertext{and}
X_t-\underline X_t&=\sum_{n>0}\indi{S_{s'_n-}\leq t<S_{s'_n}}\imf{f'_n}{t-S_{s'_n-}}.
\end{align*}\end{linenomath}
Since the law of $X\circ C^r$ under $\p_0$ is $\p^r_0$, 
we only need to see that the excursion process of $X\circ C^r$ has the same law as $\Xi^r$ 
and that $X\circ C^r$ can be written in terms of its point process of excursions as in \eqref{ReconstructionReflectedLevyProcessFromExcursionsEquation}.

An excursion of $X\circ C^r$ above its cumulative minimum, when the latter equals $s$ is an excursion of $X-\underline X$ reflected below level $r+s$ by time-change. 
Hence the point process of excursions of $X\circ C^r$ above its cumulative minimum is\begin{linenomath}\begin{esn}
\tilde \Xi=\sum_n\delta_{\paren{\imf{\zeta}{f'_n\circ C^r},f'_n\circ C^r}},
\end{esn}\end{linenomath}which is a Poisson point process with intensity $\imf{\leb}{ds}\otimes \imf{\nu^{r+s}}{df}$. 

Also, the cumulative minimum process of $X\circ C^r$ is obtained from $\underline  X$ by erasing the time projection of excursions above the reflection threshold, which means erasing parts of jumps of $S$. 
Hence, the process\begin{linenomath}\begin{esn}
S^r=\paren{-\underline X\circ C^r}^{-1}
\end{esn}\end{linenomath}is given by
\begin{linenomath}\begin{esn}
S^r_t=dt+\sum_{s_n\leq t} \imf{\zeta}{f'_n\circ \imf{C^{r-s'_n}}{f'_n}}.
\end{esn}\end{linenomath}Finally, $X\circ C^r$ minus its cumulative minimum can be obtained by concatenating excursions,\begin{linenomath}\begin{esn}
X\circ C^r_t-\min_{s\leq t}X\circ C^r_s=\sum_{s'_n} \indi{S^r_{s'_n-}\leq t<S^r_{s'_n}}\imf{f'_n\circ C^{r-S^r_{s'_n-}}}{t-S^r_{s'_n-}}
\end{esn}\end{linenomath}so that indeed $X\circ C^r$ can be reconstructed from its excursion process as in \eqref{ReconstructionReflectedLevyProcessFromExcursionsEquation}.
\end{proof}

When $\Psi$ is supercritical, let $b>0$ be the largest root of the convex function $\Psi$ and $\imf{\Psi^\#}{\lambda}=\imf{\Psi}{\lambda+b}$. 
Recall that $\Psi^\#$ is the Laplace exponent of the (subcritical) spectrally positive L\'evy process with Laplace exponent $\Psi$ conditioned to reach arbitrarily low levels in the sense that its law equals\begin{linenomath}\begin{esn}
\lim_{z\to-\infty}\imf{\p_0}{\cond{\cdot}{\underline X_\infty<z}}
\end{esn}\end{linenomath}on every $\F_s$. 
(Cf. Lemma 7 in \cite[Ch. VII]{MR1406564} and Lemme 1 in \cite{MR1141246}.) 
Let us use the notation $\p_x$ and $\p_x^\#$ to distinguish both L\'evy processes. 
Let $H_a$ denote the hitting time of $-a$, defined by $H_a=\inf\set{t\geq 0: X_t\leq -a}$. 
Recall that conditionally on $H_a<\infty$, 
the laws of $X$ stopped at $H_a$ coincide under $\p$ and $\p^\#$. 
We deduce that, conditionally on $H_a<\infty$, the point process of excursions of $X$ above its cumulative minimum that start at levels deeper than $-a$ under $\p$ 
is a Poisson point process with the same intensity as under $\p^\#$ (let us call it $\nu^\#$). 
Furthermore, $-\underline X_\infty$ under $\p$ has an exponential law with rate $b$ 
and the post minimum process is independent of the pre-minimum process (cf. Theorem 25 in \cite[Ch. 8, p.85]{MR2320889}). 
The law of the former had been denoted $\p^{\rightarrow}$.
With these preliminaries, we can give a construction of the excursion process of $X$  above its cumulative minimum $\underline X$ under $\p$ and $\p^r$. Let\begin{linenomath}\begin{esn}
\Xi=\sum_n \delta_{\paren{s_n,f_n}}
\end{esn}\end{linenomath}be a Poisson point process with intensity $\nu^\#+b \p^\rightarrow$, interpreting the law $\p^\rightarrow$ as that of an excursion of infinite length. Construct the processes\begin{linenomath}\begin{esn}
S_t=dt+\sum_{s_n\leq t}\imf{\zeta}{f_n},\quad M=-\paren{S^{-1}}\quad\text{and}\quad W_t=x+M_t+\sum_{S_{s_n-}\leq t<S_{s_n}}\imf{f_n}{t-S_{s_n-}}. 
\end{esn}\end{linenomath}
\begin{proposition}
If $\Psi$ is supercritical, the law of $W$ is $\p_x$. 
\end{proposition}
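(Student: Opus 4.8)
The plan is to run the excursion-theoretic reconstruction used in the proof of Proposition \ref{poissonianConstructionProposition}, now applied to the \emph{unreflected} process $X$ under $\p_x$. First I would reduce to the case $x=0$ by translation invariance, since the push-forward of $\p_0$ by the map adding $x$ is $\p_x$ and the construction adds $x$ only at the final step. The backbone is It\^o's synthesis theorem: under $\p_0$ the reflected process $X-\underline X$ is strong Markov, $-\underline X$ is a version of its local time at zero, and its right-continuous inverse $S$ is a subordinator with drift $d$. By the classical formulae of Chapter VII of \cite{MR1406564}, $X$ is recovered from the excursion point process of $X-\underline X$ and the drift $d$ through exactly the relations $S_t=dt+\sum_{s_n\le t}\zeta(f_n)$, $-\underline X=S^{-1}$, and $X-\underline X=\sum_n \mathbf{1}_{\{S_{s_n-}\le t<S_{s_n}\}}f_n(t-S_{s_n-})$ that define $W$. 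Thus the whole statement reduces to identifying the law of the excursion point process of $X-\underline X$ under $\p_0$ with that of $\Xi$, namely with intensity $\nu^\#+b\,\p^\rightarrow$.

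Second, I would establish this identification of intensities from the facts recalled immediately before the statement. Because $X$ is supercritical, $-\underline X_\infty$ is finite and exponentially distributed with rate $b$, and the path splits at its overall minimum into an independent pre-minimum and post-minimum part, the latter with law $\p^\rightarrow$. In excursion-theoretic terms this means the local-time clock $-\underline X$ is killed: it increases up to $-\underline X_\infty$, after which $X$ never revisits its minimum. The finite-length excursions of $X-\underline X$ are precisely the excursions occurring at levels deeper than $-a$, conditionally on $H_a<\infty$; by the coincidence of the laws of $X$ stopped at $H_a$ under $\p$ and $\p^\#$, these form a Poisson point process with the same intensity $\nu^\#$ as under $\p^\#$. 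The single remaining excursion, of infinite lifetime, is the post-minimum path, which enters at rate $b$ because the killing rate of the clock equals the exponential rate $b$ of $-\underline X_\infty$. Hence the excursion point process has intensity $\nu^\#+b\,\p^\rightarrow$, matching $\Xi$.

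Finally, I would check that the reconstruction formulae interpret the infinite excursion correctly, and this is the step I expect to require the most care. When $\Xi$ produces an atom $(s_{n^*},f_{n^*})$ with $\zeta(f_{n^*})=\infty$, the sum defining $S$ jumps to $+\infty$ at local time $s_{n^*}$, so $M=-S^{-1}$ freezes at $-s_{n^*}$ and the trajectory thereafter coincides with $f_{n^*}$. Since $s_{n^*}$ is the first point of the $b\,\p^\rightarrow$ component it is exponential with rate $b$, so $-M_\infty=s_{n^*}$ has the correct law of $-\underline X_\infty$; moreover $f_{n^*}$, drawn from $\p^\rightarrow$ independently of the finite excursions, supplies the post-minimum part with the correct law and the correct independence from the pre-minimum part. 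The pre-minimum part of $W$, built from the finite excursions and the drift $d$, reproduces $X-\underline X$ up to the overall minimum exactly as in Proposition \ref{poissonianConstructionProposition}. Concatenating, $W$ has law $\p_0$, and adding $x$ yields $\p_x$. The main subtlety is this handling of the lone infinite excursion together with the verification that the killing of the inverse local time matches the exponential law of $-\underline X_\infty$; everything else is the routine excursion reconstruction already carried out in the (sub)critical case.
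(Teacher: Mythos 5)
Your proposal is correct and follows exactly the route the paper intends: the paper states this proposition without an explicit proof, but the preliminaries it assembles immediately beforehand (coincidence of the laws stopped at $H_a$ under $\p$ and $\p^\#$, the Poisson structure of the excursions below level $-a$ with intensity $\nu^\#$, the exponential law of rate $b$ for $-\underline X_\infty$, and the independence of the post-minimum part with law $\p^\rightarrow$) are precisely the ingredients you use, combined with the reconstruction formulae already justified in Proposition \ref{poissonianConstructionProposition}. Your careful treatment of the lone infinite excursion --- the jump of $S$ to $+\infty$ at the first atom of the $b\,\p^\rightarrow$ component, the freezing of $M$, and the matching of that atom's exponential law with that of $-\underline X_\infty$ --- is the one point the paper leaves entirely to the reader, and you handle it correctly.
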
An analogous Poissonian construction is valid in the reflected case. 
To construct the excursion measure, let $V^r$ 
stand for the operator that removes (by time change) the trajectory above  level $r$ and 
let $\nu^r$ be the law of the concatenation of $\p^\rightarrow\circ (V^r)^{-1}$ followed by $\q_r^r$ (which is the law of the L\'evy process started at $r$, reflected below $r$, and killed (or sent to $\dagger$) upon reaching zero. 
Consider also the push-forward $\nu^{\#, r}$ of $\nu^{\#}$ by truncation at level $r$. Finally, let\begin{linenomath}\begin{esn}
\Xi^{r}=\sum_{n}\delta_{\paren{s_n,f_n}}
\end{esn}\end{linenomath}be a Poisson point process with intensity $\imf{\leb}{ds}\otimes\paren{\nu^{\#,r-x+s}+b \nu^{r-x+s}}$. As before, consider\begin{linenomath}\begin{esn}
S^r_t=dt+ \sum_{s_n\leq t} \imf{\zeta}{f_n}, \quad M^r=-\paren{S^r}^{-1}\quad\text{and}\quad W_t^r=x+M^r_t+\sum_{S^r_{s_n-}\leq t<S^r_{s_n}}\imf{f_n}{t-S^r_{s_n-}}.
\end{esn}\end{linenomath}
\begin{proposition}
\label{PoissonianConstructionRelfectedLevyProcessProposition}
If $\Psi$ is supercritical, 
the law of $W^r$ is $\p_x^r$. 
\end{proposition}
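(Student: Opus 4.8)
The plan is to run, for supercritical $\Psi$, the same excursion-theoretic argument that underlies Proposition~\ref{poissonianConstructionProposition}, replacing the (sub)critical excursion measure by the supercritical one $\nu=\nu^\#+b\p^\rightarrow$ and tracking carefully the effect of the moving reflection barrier on each of its two pieces. As there, it suffices to treat $x=0$, the general case following by adding the constant $x$, since the pushforward of $\p_0^{r-x}$ by that shift is $\p_x^{r}$. The essential new difficulty is that the identity ``$X\circ C^r$ under $\p_0$ has law $\p_0^r$'', which drove the (sub)critical proof, is unavailable: under $\p_0$ the supercritical process drifts to $+\infty$ and spends only a finite time below $r$, so a plain time-change of the free trajectory cannot produce the recurrent, infinitely long reflected process. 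The infinite excursions must therefore be treated separately.

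First I would recall, from the preceding Poissonian construction of $\p_0$ and from excursion theory for the free process, that under $\p_0$ the negative running minimum $-\underline X$ is a local time at the infimum whose right-continuous inverse is a subordinator of drift $d=\lim_{\lambda\to\infty}\imf{\Phi}{\lambda}/\lambda$, and that the point process of excursions of $X-\underline X$ above its minimum is Poisson with intensity $\imf{\leb}{ds}\otimes\imf{\nu}{df}$, $\nu=\nu^\#+b\p^\rightarrow$. An excursion occurring at local time $s$ starts at level $\underline X=-s$, so the room up to the barrier equals $r+s$; the effect of reflecting below $r$ on such an excursion is to truncate it at level $r+s$ and, for those reaching the barrier, to continue it by reflection. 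The finite excursions, carried by $\nu^\#$, are truncated exactly as in the (sub)critical case and produce the law $\nu^{\#,\,r+s}$. The genuinely new term is $b\p^\rightarrow$: in the free process this is the rate-$b$ (in local time) infinite excursion straddling the overall minimum, after which $X$ escapes to $+\infty$, and under reflection it must instead become a finite excursion that returns to the minimum.

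The heart of the proof, and the step I expect to be the main obstacle, is to identify the law of such a reflected infinite excursion with $\nu^{r+s}$, the concatenation of $\p^\rightarrow\circ(V^{r+s})^{-1}$ with $\q_{r+s}^{r+s}$. Three points must be justified. First, since $\p^\rightarrow$ drifts to $+\infty$ and, being the post-minimum process, never returns to its starting infimum, it reaches the level $r+s$ almost surely, and up to that time its truncation is distributed as $\p^\rightarrow\circ(V^{r+s})^{-1}$. Second, because $X$ has no negative jumps it comes back to and is capped at the barrier continuously, so reflection places the excursion exactly at the barrier there; the strong Markov property of the reflected process (Proposition~1 of \cite[Ch.~VI]{MR1406564}) then splits off the post-barrier piece as the L\'evy process started at the barrier, reflected below it and killed upon its return to the minimum, that is, $\q_{r+s}^{r+s}$. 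Third, this killed-reflected process, being recurrent at the barrier with downward excursions of unbounded depth, reaches $0$ almost surely, so the reflected excursion is finite. Consequently the reflected infimum $-\underline{W}$ now increases to $+\infty$ and its inverse local time $S^r$ is a genuine subordinator, of the same drift $d$ and with no killing, whereas the free-process inverse local time was killed at the first infinite excursion. The delicate issues here are the creeping-versus-jumping behaviour at the barrier (resolved by spectral positivity, which forces continuous downward crossings) and the measurability of the straddling excursion, which I would handle as in the analysis of the excursion straddling a fixed time used earlier.

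Having identified the excursion point process of $\p_0^r$ above its infimum as the Poisson measure $\Xi^r$ with intensity $\imf{\leb}{ds}\otimes\paren{\nu^{\#,\,r+s}+b\,\nu^{r+s}}$ and its inverse local time as a subordinator of drift $d$, It\^o's synthesis reconstructs the process from this data precisely through the formulae defining $S^r$, $M^r=-\paren{S^r}^{-1}$ and $W^r$; hence $W^r$ has law $\p_0^r$, and adding $x$ gives law $\p_x^r$ in general. I would close by noting that setting $b=0$, which removes the infinite excursions, collapses the argument to that of Proposition~\ref{poissonianConstructionProposition}, providing the required consistency check.
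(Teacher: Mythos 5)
Your proposal is correct and follows essentially the same route as the paper's (sketched) proof: reduce to $x=0$, split the excursions of $X-\underline X$ into the $\nu^\#$-type ones, which are merely truncated, and the rate-$b$ infinite one, which after truncation at the barrier must be continued by the reflected-and-killed process started at the barrier, yielding the intensity $\imf{\leb}{ds}\otimes\paren{\nu^{\#,r+s}+b\,\nu^{r+s}}$, and then reassemble via It\^o synthesis. Your treatment is in fact somewhat more detailed than the paper's proof sketch (which delegates the identification of the continued infinite excursion to the time-change construction of Proposition~\ref{timeChangeConstructionOfReflectedAndKilledLevyProcessProposition} rather than invoking the strong Markov property at the barrier directly), but the underlying argument is the same.
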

\begin{proof}[Proof sketch]
Again, it suffices to consider the case $x=0$. 
The arguments are very close to those of Proposition \ref{poissonianConstructionProposition}. 
The differences stem from the fact that in the supercritical case, under $\p_0$, 
there are two types of excursions: those corresponding to the associated subcritical L\'evy process with exponent $\Psi^{\#}$, and the unique excursion of infinite length (which appears at rate $b$ for the cumulative minimum process). 
Appealing to the time-change construction of $\p^r_0$
in the proof of Proposition \ref{timeChangeConstructionOfReflectedAndKilledLevyProcessProposition} we also see the two type of excursions. 
The first type of excursions come from the associated subcritical process, 
which are then affected by a time-change to remain below level $r-x+s$ if the cumulative minimum takes the value $-s$. 
On the other hand, the excursion of infinite length under $\p_0$ will also be affected by a time-change to remain below $r-x+s$, except that after death time, we must concatenate a process with law $\p_r^r$ which is killed upon reaching $-s$. 
These second types of excursions arrive at rate $b$. 
\end{proof}

\section{Measures on locally compact TOM trees with the splitting property}
\label{locallyCompactSplittingTreeSection}

In this section, we will establish the relationship between L\'evy processes and measures on locally compact TOM trees stated in Theorem \ref{MeasuresWithSplittingPropertyCompactCaseTheorem}. 
Part of the proof of this theorem is similar to the proof in the compact case of Theorem \ref{MeasuresWithSplittingPropertyCompactCaseTheorem} which we will follow closely, highlighting the differences.

We first use excursion theory for L\'evy processes as in the compact case to construct the measure $\eta^\Psi$ of Theorem \ref{MeasuresWithSplittingPropertyCompactCaseTheorem} in the non-compact case and to prove that it has the splitting property. 

\begin{proof}[Proof of Theorem \ref{MeasuresWithSplittingPropertyCompactCaseTheorem} in the non-compact case: construction of $\eta^\Psi$ and proof of its splitting property]
Let $\Psi$ be a supercritical Laplace exponent. 
Recall the definition of the truncated excursion measure $\nu^r$ associated to $\Psi$ given in Section \ref{IntroductionSection} before the definition of the splitting property. 


From Proposition \ref{levyProcessCodesTreeProposition}, the local absolute continuity of $\p^{\Psi}$ and $\p^{\#}$, and the fact that $Y^r$ is a concatenation of time-changed trajectories of $\p^\Psi_0$ 
 we see that 
the canonical process codes a compact TOM tree $\nu^r$-almost surely. 
Let $\eta^r$ be the image measure of $\nu^r$ under the mapping sending functions to TOM trees. 
Thanks to Corollary \ref{ConsistencyUnderTimeChangeCorollary}, we see that the measures $\paren{\nu^r,r\geq 0}$ are consistent under time-change. 
Hence, Corollary \ref{existenceOfProjectiveLimitOfMeasuresCorollary} implies the existence of a measure $\eta^\Psi$ on locally compact TOM trees 
such that  the image of $\eta^\Psi$ by truncation at level $r$ equals $\eta^r$. 
By the Markov property under $\nu^r$, we see that on the set $\zeta>t$ and conditionally on $X_t=x$, the post-$t$ process has law $\q_x^r$. 
By the Poissonian construction of $\p_x^r$ of Proposition \ref{PoissonianConstructionRelfectedLevyProcessProposition}, 
we see that the point process of excursions above the cumulative minimum of the post-$t$ process 
is a Poisson random measure with intensity $\imf{\leb}{ds}\otimes \imf{\nu^{r-x+s}}{df}$. 
We conclude that $\eta^\Psi$ has the splitting property. 
\end{proof}

We now consider a measure $\eta$ on locally compact TOM trees which is not concentrated on compact ones. 
Our aim is to construct the Laplace exponent of a supercritical spectrally positive L\'evy process such that $\eta=\eta^\Psi$. 
In the compact case, we could construct the corresponding L\'evy process directly, while in the locally compact case, 
we will construct first the L\'evy process reflected at a given level $r$ and obtain the L\'evy process in the limit as $r\to\infty$. 

\begin{proof}[Proof of Theorem \ref{MeasuresWithSplittingPropertyCompactCaseTheorem} in the non-compact case, Construction of a Laplace exponent such that $\eta=\eta^\Psi$]
Let $\eta^r$ be the push-forward of $\eta$ by truncation at level $r$ and let $n^r$ the law of the contour process under $\eta^r$. Let\begin{linenomath}\begin{esn}
\Xi=\sum_i \delta_{\paren{x_i,f_i}}
\end{esn}\end{linenomath}be a Poisson random measure on $[0,\infty)\times \mathbf{E}$ with intensity $\imf{\leb}{ds}\otimes \eta^{r+s}$ (the definition of the intensity was simpler in the compact case since truncation was not necessary). 
If $a$ is the sojourn of $\eta$, define\begin{linenomath}\begin{esn}
S_l=al+ \sum_{x_i\leq l}\imf{\zeta}{f_i}\quad\text{and} \quad \Lambda=S^{-1}
\end{esn}\end{linenomath}as well as\begin{linenomath}\begin{esn}
Y^r_t=\sum_{x_i\leq x}\indi{S_{x_i-}\leq t<S_{x_i}}\imf{f_i}{t-S_{x_i-}}-\Lambda_t. 
\end{esn}\end{linenomath}We claim that $Y^r$ is a reflected L\'evy process. 
To prove it, however, we need to let $r\to\infty$ to see that we get a limit which is a L\'evy process. 
However, truncation allows us to use the splitting property in a way that is parallel to the compact case. 
Note that the measure $\eta^r$ satisfies an $r$-dependent version of the splitting property: 
under $\eta^r$ and conditionally on $X_t=x$, $\Xi_t$ is a Poisson random measure on $[0,x]\times \cts_c$ with intensity $\imf{\leb}{ds}\otimes \imf{\eta^{r-s}}{df}$. 
This is an important difference with the compact case. 
For example, if we denote by $Y^{r,x}$ the process obtained by killing $Y^r$ when it reaches level $-x$, 
then (with the same proof as in the compact case) we see that the two processes\begin{linenomath}\begin{esn}
\begin{cases}
Y^{r,x+y}_t& t\leq S_x\\
\dagger&t\geq S_x
\end{cases}
\quad\text{and}\quad
\begin{cases}
Y^{r,x+y}_{t+S_x}+x& t\leq S_{x+y}-S_x\\
\dagger&t\geq S_{x+y}-S_x
\end{cases}
\end{esn}\end{linenomath}are independent and have the same laws as $Y^{r,x}$ and $Y^{r+x,y}$ respectively. 

Denote the law of $x+Y^{r-x,x}$ by $\q_x^r$. 
Just as in the compact case, use of the splitting property implies that both $n^r$ and $\q_x^r$ are Markovian: 
\begin{linenomath}\begin{align*}
\imf{n^r}{\imf{g_1}{X_{s_1}}\cdots\imf{g_n}{X_{s_n}}F\circ\theta_{s_n}}
&=\imf{n^r}{\imf{g_1}{X_{s_1}}\cdots\imf{g_n}{X_{s_n}}\imf{\q_{X_{s_n}}^r}{F}}
\intertext{and}
\imf{\q_x^r}{\imf{g_1}{X_{s_1}}\cdots\imf{g_n}{X_{s_n}}F\circ\theta_{s_n}}
&=\imf{\q_x^r}{\imf{g_1}{X_{s_1}}\cdots\imf{g_n}{X_{s_n}}\imf{\q_{X_{s_n}}^r}{F}}.
\end{align*}\end{linenomath}

Let $\p_x^r$ be the law of $x+Y^{r-x}$ (defined for $x\leq r$). 
The above Markov property for $\q_x^r$ implies that $\p_x^r$ is also Markovian. 
Using Proposition \ref{HeightProcessOfTruncatedChronologicalTreeProposition} we see that if $r<r'$ then the image of $n^{r'}$ under truncation at level $r$ equals $n^r$. 
Hence, the image of $\p_x^{r'}$ under truncation at level $r$ equals $\p_x^r$. 
If $H^+_r$ equals the hitting time of $[r,\infty)$ by the canonical process we then see that for any $A\in\F_t$
\begin{linenomath}\begin{esn}
\imf{\p_x^r}{A, H^+_r>t}=\imf{\p_x^{r'}}{A, H^+_r>t}
\end{esn}\end{linenomath}
which implies, 
through a projective limit theorem such as Theorem 3.2 in \cite[Ch. V, p. 139]{MR2169627}, 
the existence of a probability measure $\p_x$, 
on \cadlag\ trajectories $\fun{f}{[0,\infty)}{\re\cup\set{\infty}}$ 
for which there exists $\imf{\zeta}{f}\in [0,\infty]$ such that $f(t)=\infty$ if and only if $t\geq \imf{\zeta}{f}$
and such that $\lim_{s\uparrow t}\imf{f}{s}=\infty$ implies $\zeta(f)\leq t$, 
such that
\begin{linenomath}\begin{esn}
\imf{\p_x}{A, H^+_r>t}=\lim_{r'\to\infty }\imf{\p_x^{r'}}{A, H^+_r>t}.
\end{esn}\end{linenomath}This law is then easily seen to be subMarkovian. 
To see that $\p_0$ is the law of a L\'evy process, note that
\begin{linenomath}\begin{esn}
\imf{\p_x^r}{\imf{F}{y+X}}=\imf{\p_{x+y}^{r-y}}{\imf{F}{X}}.
\end{esn}\end{linenomath}
If $F$ is $\F_t$-measurable and bounded and vanishes if $H^+_r\leq t$, 
we can take limits as $r\to\infty$ to obtain
\begin{linenomath}\begin{esn}
\imf{\se_x}{\imf{F}{y+X}}=\imf{\se_{x+y}}{\imf{F}{X}}
\end{esn}\end{linenomath}which implies that the family of laws $\paren{\p_x,x\in\re}$ is spatially homogeneous and so $\p_0$ is the law of a (possibly killed) L\'evy process. 
The L\'evy process has to be spectrally positive since contour processes have only positive jumps. 
Let $\Psi$ be its Laplace exponent. 
We now assert that $\p_x^r$ has the law of a L\'evy process with Laplace exponent $\Psi$ reflected below $r$, 
following the proof of Proposition \ref{ConsistencyUnderTimeChangeProposition}. 
Indeed, recall that under $\p_x^r$ the canonical process is Markov. 
Also, if we stop the canonical process when it surpasses $r$, this stopped process has the same law under $\p_x^r$ and under $\p_x$ by construction. 
Finally, note that under $\p_x$, the level set at $r$ $\set{t\geq 0: X_t=r}$ has Lebesgue measure zero and by truncating at $r'>r$ and time changing, we see that the same holds under $\p_x^r$. 
Hence, $\p_x^r$ coincides with the reflection of $\p_x$ at level $r$. 
We conclude that $\eta=\eta^\Psi$. 
Note that $\Psi$ is then supercritical since otherwise $\eta^\Psi$ would be concentrated on compact TOM trees and $\eta$ was assumed to charge non-compact trees. 
\end{proof}

\appendix

\section{Preliminaries on TOM trees and a Proof of Theorem \ref{CRTCodingTheorem}}
\label{CRTCodingTheoremProof}
The objective of this section is to collect some technical results on TOM trees in Subsection \ref{PreliminaryResultsOnTreesSubsection} which will enable us to construct a coding function for them in Subsection \ref{codingFunctionSubsection} and to prove Theorem \ref{CRTCodingTheorem}. 
\subsection{Preliminary results}
\label{PreliminaryResultsOnTreesSubsection}
We start with an analysis of the greatest common ancestor operator denoted $\wedge$ in Section \ref{IntroductionSection} (page \pageref{greaterCommonAncestorDefinition}). Recall that $\sigma_1\wedge\sigma_2$ is characterized by\begin{linenomath}\begin{esn}
[\rho,\sigma_1]\cap[\rho,\sigma_2]=[\rho,\sigma_1\wedge \sigma_2].
\end{esn}\end{linenomath}Since on the interval $[\sigma_1,\sigma_2]$ the distance $d$ coincides with the usual distance on an interval, for every $\sigma\in[\sigma_1,\sigma_2]$ we have:\begin{equation}
\label{DistanceByDistancesToRootEquation}
\imf{d}{\sigma_1,\sigma_2}=\imf{d}{\sigma_1,\sigma}+\imf{d}{\sigma,\sigma_2}. 
\end{equation}As usual, the open ball of radius $\eps$ centered at $\sigma\in\tau$ is denoted $\ball{\eps}{\sigma}$.
\begin{lemma}
\label{BicontinuityOfWedgeOperatorLemma}
The $\wedge$ operator is bicontinuous. 
\end{lemma}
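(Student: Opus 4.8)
The plan is to prove that $\wedge\colon\tau\times\tau\to\tau$ is jointly continuous (which is what ``bicontinuous'' means here), by reducing to two ingredients: a Gromov-product formula for the height $d(\rho,\sigma_1\wedge\sigma_2)$, and control of the location of the approximating common ancestors along geodesics emanating from $\rho$.

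First I would record that $\sigma_1\wedge\sigma_2$ lies on the geodesic $[\sigma_1,\sigma_2]$. Indeed, since $[\rho,\sigma_1]\cap[\rho,\sigma_2]=[\rho,\sigma_1\wedge\sigma_2]$, the sub-segments $[\sigma_1\wedge\sigma_2,\sigma_1]$ and $[\sigma_1\wedge\sigma_2,\sigma_2]$ meet only at $\sigma_1\wedge\sigma_2$: a common point $p\neq\sigma_1\wedge\sigma_2$ would lie in $[\rho,\sigma_1]\cap[\rho,\sigma_2]=[\rho,\sigma_1\wedge\sigma_2]$ and yet, by \eqref{DistanceByDistancesToRootEquation}, be strictly farther from $\rho$ than $\sigma_1\wedge\sigma_2$, a contradiction. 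Their concatenation is thus an injective path from $\sigma_1$ to $\sigma_2$, so by the Lack of loops axiom it coincides with $[\sigma_1,\sigma_2]$. Applying \eqref{DistanceByDistancesToRootEquation} to the three collinearities $\sigma_1\wedge\sigma_2\in[\rho,\sigma_1]$, $\sigma_1\wedge\sigma_2\in[\rho,\sigma_2]$ and $\sigma_1\wedge\sigma_2\in[\sigma_1,\sigma_2]$ and combining them gives $d(\rho,\sigma_1\wedge\sigma_2)=\tfrac12\big(d(\rho,\sigma_1)+d(\rho,\sigma_2)-d(\sigma_1,\sigma_2)\big)$; in particular $(\sigma_1,\sigma_2)\mapsto d(\rho,\sigma_1\wedge\sigma_2)$ is continuous.

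Next, given sequences $\sigma_i^n\to\sigma_i$, I set $a=\sigma_1\wedge\sigma_2$, $a_n=\sigma_1^n\wedge\sigma_2^n$, $h=d(\rho,a)$ and $h_n=d(\rho,a_n)$, so that the formula yields $h_n\to h$. Since $a\in[\rho,\sigma_1]$ and $a_n\in[\rho,\sigma_1^n]$ we have $a=\phi_{\rho,\sigma_1}(h)$ and $a_n=\phi_{\rho,\sigma_1^n}(h_n)$. Writing $c_n=\sigma_1\wedge\sigma_1^n$ and $k_n=d(\rho,c_n)$, the same formula gives $k_n\to d(\rho,\sigma_1)\ge h$, and the geodesics $[\rho,\sigma_1]$ and $[\rho,\sigma_1^n]$ agree up to height $k_n$. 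I would then distinguish two cases: if $h_n\le k_n$ then $a_n=\phi_{\rho,\sigma_1}(h_n)$ lies on $[\rho,\sigma_1]$, so $d(a,a_n)=|h-h_n|\to0$; if $h_n>k_n$ then $a_n$ has branched off $[\rho,\sigma_1]$ at $c_n$, whence $d(a,a_n)\le d(a,c_n)+d(c_n,a_n)=|h-k_n|+(h_n-k_n)$. When $h<d(\rho,\sigma_1)$ the first case holds for all large $n$, while when $h=d(\rho,\sigma_1)$ the bound in the second case tends to $0$ since $k_n,h_n\to h$. In either event $a_n\to a$, giving the joint continuity.

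The main obstacle is precisely this last step: the height alone does not determine the point, so one must locate the approximating ancestors $a_n$, not merely their distances to $\rho$. The delicate sub-case is $\sigma_1\preceq\sigma_2$ (that is, $a=\sigma_1$), where $h=d(\rho,\sigma_1)$ and the common initial segment $[\rho,c_n]$ of the two root geodesics has length converging to exactly $h$; here one cannot claim that $a_n$ sits on $[\rho,\sigma_1]$ and must instead fall back on the quantitative estimate $d(a,a_n)\le|h-k_n|+(h_n-k_n)\to0$. Everything else is a routine consequence of the uniqueness of geodesics and of \eqref{DistanceByDistancesToRootEquation}.
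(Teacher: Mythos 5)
Your proof is correct, but it takes a genuinely different route from the paper's. The paper argues via the auxiliary points $\sigma^\eps=\imf{\phi_{\rho,\sigma}}{\imf{d}{\rho,\sigma}-\eps}$ and the two-sided inclusion $[\rho,\sigma^\eps]\subset[\rho,\tilde\sigma]\subset[\rho,\sigma^\eps]\cup\ball{\eps}{\sigma}$ for $\tilde\sigma\in\ball{\eps}{\sigma}$, and then runs a three-case analysis on the geometry of the pair ($\sigma_1\wedge\sigma_2$ distinct from both points, equal to both, or equal to one of them), in each case pinning $\tilde\sigma_1\wedge\tilde\sigma_2$ inside a $2\eps$-ball. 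You instead first establish the Gromov-product identity $\imf{d}{\rho,\sigma_1\wedge\sigma_2}=\tfrac12\paren{\imf{d}{\rho,\sigma_1}+\imf{d}{\rho,\sigma_2}-\imf{d}{\sigma_1,\sigma_2}}$ (using that $\sigma_1\wedge\sigma_2\in[\sigma_1,\sigma_2]$, which you justify correctly via the lack-of-loops axiom), deduce continuity of the height, and then locate $a_n=\sigma_1^n\wedge\sigma_2^n$ on the root geodesic $[\rho,\sigma_1^n]$, comparing it with $[\rho,\sigma_1]$ through the branch point $c_n=\sigma_1\wedge\sigma_1^n$; your dichotomy $h_n\leq k_n$ versus $h_n>k_n$ is exhaustive and the estimates $\abs{h-h_n}$ and $\abs{h-k_n}+(h_n-k_n)$ both tend to zero, including in the delicate case $\sigma_1\preceq\sigma_2$. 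What your approach buys is an explicit quantitative modulus of continuity for $\wedge$ in terms of $\imf{d}{\sigma_i,\sigma_i^n}$, and it isolates the clean algebraic fact that the height of the ancestor is a continuous (indeed Lipschitz) function of the pair; what the paper's approach buys is the inclusion \eqref{SetInclusionForBicontinuityOfWedgeOperator}, which it reuses later (e.g.\ in the proof of Lemma \ref{MonotonicSequencesConvergeLemma}), so the case analysis there is not wasted work. Either argument is a complete proof of the lemma.
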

\begin{proof}
Let $\sigma\in\tau$; for every $\eps\in (0,\imf{d}{\rho,\sigma})$, let $\sigma^\eps$ be the unique point on $[\rho,\sigma]$ at distance $\eps$ from $\sigma$, so that\begin{linenomath}\begin{esn}(\sigma^\eps,\sigma]\in\ball{\eps}{\sigma}
\quad\text{and}\quad [\rho,\sigma^\eps]\not\in\ball{\eps}{\sigma}
. 
\end{esn}\end{linenomath}Note that if $\tilde\sigma\in \ball{\eps}{\sigma}$, then, by considering the unique isometry to an interval, $[\sigma,\tilde\sigma]\subset\ball{\eps}{\sigma}$ and $\sigma^\eps\prec\sigma\wedge\tilde\sigma$. Hence\begin{equation}
\label{SetInclusionForBicontinuityOfWedgeOperator}
[\rho,\sigma^\eps]
\subset 
[\rho,\tilde\sigma]\subset [\rho,\sigma^\eps]\cup\ball{\eps}{\sigma}.
\end{equation}
 
There are 3 different geometries to consider to prove that $\wedge$ is continuous at $\sigma_1,\sigma_2$, using the fact that $\wedge$ is symmetric:
\begin{enumerate}
\item $\sigma_1\wedge\sigma_2\neq\sigma_1,\sigma_2$:  In this case, we choose\begin{linenomath}\begin{esn}
0<\eps<\imf{d}{\sigma_1,\sigma_1\wedge\sigma_2}\wedge\imf{d}{\sigma_1\wedge\sigma_2,\sigma_2}\leq \imf{d}{\sigma_1,\sigma_2}/2
\end{esn}\end{linenomath}where the last inequality holds by equation \eqref{DistanceByDistancesToRootEquation}. 
Note then that\begin{linenomath}\begin{esn}
\sigma_1^\eps\wedge\sigma_2^\eps=\sigma_1\wedge\sigma_2\quad\text{and}\quad \ball{\eps}{\sigma_1}\cap\ball{\eps}{\sigma_2}=\emptyset
\end{esn}\end{linenomath}by choice of $\eps$. Equation \eqref{SetInclusionForBicontinuityOfWedgeOperator} implies that if $\tilde\sigma_i\in\ball{\eps}{\sigma_i}$ then\begin{linenomath}\begin{esn}
[\rho,\sigma_1\wedge\sigma_2]\subset[\rho,\tilde\sigma_1]\cap[\rho,\tilde\sigma_2]\subset [\rho,\sigma_1\wedge\sigma_2]
\end{esn}\end{linenomath}so that $\tilde\sigma_1\wedge\tilde\sigma_2=\sigma_1\wedge\sigma_2$.
\item $\sigma_1\wedge\sigma_2=\sigma_1=\sigma_2$: We write $\sigma$ instead of $\sigma_i$ and consider\begin{linenomath}\begin{esn}
0<\eps<\imf{d}{\rho,\sigma}). 
\end{esn}\end{linenomath}We then note that if $\tilde \sigma_1,\tilde \sigma_2\in\ball{\eps}{\sigma}$ then, by Equation \eqref{SetInclusionForBicontinuityOfWedgeOperator},\begin{linenomath}\begin{esn}
[\rho,\sigma^\eps]\subset [\rho,\tilde\sigma_1]\cap[\rho,\tilde\sigma_2]\subset[\rho,\sigma^\eps]\cup \ball{\eps}{\sigma},
\end{esn}\end{linenomath}so that\begin{linenomath}\begin{esn}
\tilde\sigma_1\wedge\tilde\sigma_2\in\set{\sigma^\eps}\cup\ball{\eps}{\sigma}\subset\ball{2\eps}{\sigma}.
\end{esn}\end{linenomath}
\item $\sigma_1\wedge\sigma_2=\sigma_1\prec\sigma_2$: We consider\begin{linenomath}\begin{esn}
0<\eps<\frac{1}{2}\,\imf{d}{\rho,\sigma_1}\wedge\imf{d}{\sigma_1,\sigma_2},
\end{esn}\end{linenomath}so that\begin{linenomath}\begin{esn}
\ball{\eps}{\sigma_1}\cap\ball{\eps}{\sigma_2}=\emptyset\quad\text{and}\quad\sigma_1^\eps\prec\sigma_2^\eps. 
\end{esn}\end{linenomath}If $\tilde\sigma_i\in\ball{\eps}{\sigma_i}$ then Equation \eqref{SetInclusionForBicontinuityOfWedgeOperator} gives\begin{linenomath}\begin{esn}
[\rho,\sigma_1^\eps]\subset [\rho,\tilde\sigma_1]\cap[\rho,\tilde\sigma_2]\subset [\rho,\sigma_1^\eps]\cup\ball{\eps}{\sigma_1}
\end{esn}\end{linenomath}so that $\tilde\sigma_1\wedge\tilde\sigma_2\in\ball{2\eps}{\sigma_1}$.\qedhere
\end{enumerate}
\end{proof}
%
%

We tacitly assumed the sets involved in \defin{Mes1} of the definition of a TOM tree are measurable. 
They are actually closed as we now show. For any $\sigma\in\tau$, define the \defin{left, strict left, right, and strict right} of $\sigma$ as follows:\begin{linenomath}\begin{align*}
&L_\sigma=\set{\tilde\sigma\in\tau:\tilde\sigma\leq \sigma},
&&L_{\sigma-}=\set{\tilde\sigma\in\tau:\tilde\sigma< \sigma},\\
&R_\sigma=\set{\tilde\sigma\in\tau:\tilde\sigma\geq \sigma},
&&R_{\sigma+}=\set{\tilde\sigma\in\tau:\tilde\sigma> \sigma}.
\end{align*}\end{linenomath}
\begin{lemma}
\label{StrictLeftIsOpenLemma}
For any $\sigma\in\tau$, the strict left of $\sigma$ is open.
\end{lemma}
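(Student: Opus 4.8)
The plan is to show that every point of $L_{\sigma-}$ has an open ball around it still contained in $L_{\sigma-}$. Fix $\tilde\sigma\in L_{\sigma-}$, so $\tilde\sigma<\sigma$. The first step is to rule out that $\tilde\sigma$ is a genealogical ancestor-or-equal situation with $\sigma$: if $\tilde\sigma\preceq\sigma$ held, then \defin{Or1} would give $\sigma\leq\tilde\sigma$, and together with $\tilde\sigma<\sigma$ and antisymmetry of the total order this forces $\tilde\sigma=\sigma$, contradicting $\tilde\sigma<\sigma$. Hence the common ancestor $m:=\tilde\sigma\wedge\sigma$ differs from $\tilde\sigma$, so that $\delta:=\imf{d}{\tilde\sigma,m}>0$. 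This positive number is what gives room for the ball.

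Next I would bridge the order relation $\tilde\sigma<\sigma$ to its neighbors through an intermediate point. Choose any $0<\eps<\delta$ and let $\tilde\sigma^\eps$ be the point of $[\rho,\tilde\sigma]$ at distance $\eps$ from $\tilde\sigma$; this is well defined since $\eps<\delta\leq\imf{d}{\rho,\tilde\sigma}$ (as $m\in[\rho,\tilde\sigma]$). Because $\eps<\delta=\imf{d}{\tilde\sigma,m}$, the point $\tilde\sigma^\eps$ lies strictly between $\tilde\sigma$ and $m$ along $[\rho,\tilde\sigma]$, whence $\tilde\sigma^\eps\in[\tilde\sigma,\tilde\sigma\wedge\sigma)$. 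Applying \defin{Or2} to $\tilde\sigma<\sigma$ then yields $\tilde\sigma^\eps<\sigma$. Now for an arbitrary $\hat\sigma\in\ball{\eps}{\tilde\sigma}$, the inclusion \eqref{SetInclusionForBicontinuityOfWedgeOperator}, with $\tilde\sigma$ playing the role of the centre, gives $[\rho,\tilde\sigma^\eps]\subset[\rho,\hat\sigma]$, that is $\tilde\sigma^\eps\preceq\hat\sigma$. Finally \defin{Or1} converts this into $\hat\sigma\leq\tilde\sigma^\eps<\sigma$, so $\hat\sigma<\sigma$. This shows $\ball{\eps}{\tilde\sigma}\subset L_{\sigma-}$ and finishes the proof.

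The crux is the decision to compare a neighbor $\hat\sigma$ of $\tilde\sigma$ not with $\sigma$ directly but with the auxiliary point $\tilde\sigma^\eps$ slightly below $\tilde\sigma$, which is what makes the two order axioms cooperate: \defin{Or2} propagates the strict inequality $\tilde\sigma<\sigma$ down to the whole initial segment $[\tilde\sigma,\tilde\sigma\wedge\sigma)$, producing a witness $\tilde\sigma^\eps$ that is still strictly below $\sigma$, while \defin{Or1} lets the purely genealogical information from \eqref{SetInclusionForBicontinuityOfWedgeOperator}, namely that $\tilde\sigma^\eps$ is an ancestor of every point in $\ball{\eps}{\tilde\sigma}$, be translated back into an order inequality. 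The only points requiring care are the preliminary exclusion of $\tilde\sigma\preceq\sigma$ and the verification that $\tilde\sigma^\eps$ genuinely sits in $[\tilde\sigma,\tilde\sigma\wedge\sigma)$; once these are settled the argument is uniform and needs no separate treatment of the direct-ancestor ($m=\sigma$) versus genuine-branching ($m\neq\sigma,\tilde\sigma$) geometries.
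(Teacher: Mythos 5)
Your proof is correct and follows essentially the same route as the paper's: both hinge on the positivity of $\delta=\imf{d}{\tilde\sigma,\tilde\sigma\wedge\sigma}$ (ruled nonzero via \textbf{Or1}), then sandwich each point $\hat\sigma$ of a small ball between an intermediate point of $[\tilde\sigma,\tilde\sigma\wedge\sigma)$ via \textbf{Or1} and push that intermediate point strictly below $\sigma$ via \textbf{Or2}. The only (cosmetic) difference is that you use the fixed witness $\tilde\sigma^\eps$ together with the inclusion \eqref{SetInclusionForBicontinuityOfWedgeOperator}, whereas the paper uses the $\hat\sigma$-dependent witness $\hat\sigma\wedge\tilde\sigma$ and re-derives its position by a direct distance computation.
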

\begin{proof}
Let $\sigma_1<\sigma_2$ and consider $\delta=\imf{d}{\sigma_1,\sigma_1\wedge\sigma_2}$. Since $\sigma_1<\sigma_2$, $\delta>0$ since if $\sigma_1\wedge\sigma_2$ were equal to one of the $\sigma_i$, it would have to be $\sigma_2$ thanks to \defin{Or1}. We now show that $\ball{\delta}{\sigma_1}\subset L_{\sigma_2-}$. Let $\sigma\in\ball{\delta}{\sigma_1}$. 

Note that $\sigma\wedge\sigma_1\in\ball{\delta}{\sigma_1}$ since\begin{linenomath}\begin{esn}
\imf{d}{\sigma,\sigma\wedge\sigma_1}+\imf{d}{\sigma\wedge\sigma_1,\sigma_1}=\imf{d}{\sigma,\sigma_1}<\delta
\end{esn}\end{linenomath}Using the isometry $\phi_{\rho,\sigma_1}$, we see that\begin{linenomath}\begin{align*}
\imf{d}{\rho,\sigma\wedge\sigma_1}
&=\imf{d}{\rho,\sigma_1}-\imf{d}{\sigma_1,\sigma\wedge\sigma_1}
\\&=\imf{d}{\rho,\sigma_1\wedge\sigma_2}+\imf{d}{\sigma_1\wedge\sigma_2,\sigma_1}-\imf{d}{\sigma_1,\sigma\wedge\sigma_1}
\\&=\imf{d}{\rho,\sigma_1\wedge\sigma_2}+\delta-\imf{d}{\sigma_1,\sigma\wedge\sigma_1}
\\&>\imf{d}{\rho,\sigma_1\wedge\sigma_2}
\end{align*}\end{linenomath}we obtain\begin{linenomath}\begin{esn}
\sigma_1\wedge \sigma\in (\sigma_1\wedge\sigma_2,\sigma_1];
\end{esn}\end{linenomath}property \defin{Or2} then implies  $\sigma_1\wedge\sigma<\sigma_2$ and then, using \defin{Or1}, we get\begin{linenomath}\begin{esn}
\sigma\leq \sigma\wedge\sigma_1<\sigma_2.\qedhere
\end{esn}\end{linenomath}
\end{proof}
Note however that the strict right is in general not open. Indeed, if $\sigma_1<\sigma_2$ and $\sigma_1,\sigma_2\neq\sigma_1\wedge\sigma_2$, then $\sigma_1\wedge\sigma_2\in R_{\sigma_2+}$ but every neighborhood of $\sigma_1\wedge\sigma_2$ intersects $[\sigma_1,\sigma_1\wedge\sigma_2)\subset L_{\sigma_2}$. (In the context of the above proof, $\delta$ could be $0$.) 

The previous lemma ensures that right sets are closed, as complementary sets of strict left sets. 
To prove that left (and so strict right) sets are measurable, we use Lemma \ref{StrictLeftIsOpenLemma} in conjunction with Lemma \ref{ThreeOrderedPointsLemma} below.

Consider the set $\tau_\sigma=\set{\sigma'\in\tau: \sigma\preceq \sigma'}$. 
\begin{lemma}
\label{ThreeOrderedPointsLemma}
If $\sigma_1\leq \sigma_2\leq \sigma_3$ then $\sigma_1\wedge\sigma_2\in [\sigma_1,\sigma_1\wedge\sigma_3]$ and $\sigma_2\in\tau_{\sigma_1\wedge\sigma_3}$. 
\end{lemma}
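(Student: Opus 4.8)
The plan is to reduce everything to the single relation $\sigma_1\wedge\sigma_3\preceq\sigma_1\wedge\sigma_2$, since both assertions of the lemma follow from it. Indeed, $\sigma_1\wedge\sigma_2$ and $\sigma_1\wedge\sigma_3$ both lie on the ancestral segment $[\rho,\sigma_1]$, so once we know $\sigma_1\wedge\sigma_3\preceq\sigma_1\wedge\sigma_2\preceq\sigma_1$ we may read off that $\sigma_1\wedge\sigma_2$ lies between $\sigma_1\wedge\sigma_3$ and $\sigma_1$, i.e. $\sigma_1\wedge\sigma_2\in[\sigma_1,\sigma_1\wedge\sigma_3]$. The second assertion $\sigma_2\in\tau_{\sigma_1\wedge\sigma_3}$ is then immediate by transitivity of $\preceq$: the meet $\sigma_1\wedge\sigma_2$ always satisfies $\sigma_1\wedge\sigma_2\preceq\sigma_2$ (being an ancestor of $\sigma_2$ by definition), so $\sigma_1\wedge\sigma_3\preceq\sigma_1\wedge\sigma_2\preceq\sigma_2$. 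Thus no three-point median analysis of the pairwise meets is needed.

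Having made this reduction, I would dispose of the degenerate case $\sigma_1=\sigma_2$ first: there $\sigma_1\wedge\sigma_2=\sigma_1$ and the desired relation $\sigma_1\wedge\sigma_3\preceq\sigma_1$ is part of the definition of the meet. So assume $\sigma_1<\sigma_2$. Both meets lie on the geodesic $[\rho,\sigma_1]$, on which $\preceq$ restricts to a linear order matching distance to the root, so $\sigma_1\wedge\sigma_2$ and $\sigma_1\wedge\sigma_3$ are $\preceq$-comparable; I then argue by contradiction, supposing instead that $\sigma_1\wedge\sigma_2\prec\sigma_1\wedge\sigma_3$.

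Under this assumption $\sigma_1\wedge\sigma_2\prec\sigma_1\wedge\sigma_3\preceq\sigma_1$ places $\sigma_1\wedge\sigma_3$ in the half-open segment $[\sigma_1,\sigma_1\wedge\sigma_2)$. Property \defin{Or2}, applied to $\sigma_1<\sigma_2$, gives $[\sigma_1,\sigma_1\wedge\sigma_2)<\sigma_2$ and in particular $\sigma_1\wedge\sigma_3<\sigma_2$. On the other hand $\sigma_1\wedge\sigma_3\preceq\sigma_3$, so \defin{Or1} yields $\sigma_3\leq\sigma_1\wedge\sigma_3$. Chaining these gives $\sigma_3\leq\sigma_1\wedge\sigma_3<\sigma_2$, that is $\sigma_3<\sigma_2$, contradicting the hypothesis $\sigma_2\leq\sigma_3$. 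Hence $\sigma_1\wedge\sigma_3\preceq\sigma_1\wedge\sigma_2$, which by the reduction above completes the proof.

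The only genuinely delicate point, and the one I would be most careful to state correctly, is the placement step: that $\sigma_1\wedge\sigma_2\prec\sigma_1\wedge\sigma_3\preceq\sigma_1$ really does put $\sigma_1\wedge\sigma_3$ inside $[\sigma_1,\sigma_1\wedge\sigma_2)$, so that \defin{Or2} is applicable with the correct endpoint. This rests on the fact that on the single ancestral line $[\rho,\sigma_1]$ the order $\preceq$ is linear and agrees with distance to the root, which is exactly where I would invoke the isometry $\phi_{\rho,\sigma_1}$ and equation \eqref{DistanceByDistancesToRootEquation}. Everything else is transitivity together with a direct application of \defin{Or1} and \defin{Or2}.
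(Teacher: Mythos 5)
Your proof is correct and follows essentially the same route as the paper's: both argue by contradiction from $\sigma_1\wedge\sigma_2\prec\sigma_1\wedge\sigma_3$, place $\sigma_1\wedge\sigma_3$ in $[\sigma_1,\sigma_1\wedge\sigma_2)$ to apply \defin{Or2} and get $\sigma_1\wedge\sigma_3<\sigma_2$, and play this off against $\sigma_2\leq\sigma_3\leq\sigma_1\wedge\sigma_3$ from the hypothesis and \defin{Or1}. The only cosmetic difference is that you chain the two inequalities into a single contradiction where the paper presents them as two separately impossible cases, and you spell out the reduction to $\sigma_1<\sigma_2$ a bit more explicitly.
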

\begin{proof}
It suffices to prove the result when inequalities are strict. 
Note that the assumption $\sigma_1\wedge\sigma_2\in [\rho,\sigma_1\wedge\sigma_3)$ leaves us with the impossible cases\begin{description}
\item[$\sigma_2>\sigma_1\wedge\sigma_3$] since $\sigma_1\wedge\sigma_3\geq \sigma_3$ by \defin{Or1} and $\sigma_3>\sigma_2$ and 
\item[$\sigma_2\leq \sigma_1\wedge\sigma_3$] since \defin{Or2} and $\sigma_1< \sigma_2$ imply $\sigma_1\wedge\sigma_3\in [\sigma_1,\sigma_1\wedge\sigma_2)<\sigma_2$ which implies $\sigma_1\wedge\sigma_3<\sigma_2$.
\end{description}

Finally, since $\sigma_1\wedge\sigma_3\preceq \sigma_1\wedge\sigma_2\preceq\sigma_2$, we see that $\sigma_2\in\tau_{\sigma_1\wedge\sigma_3}$.
\end{proof}

To prove that $L_\sigma$ is measurable, note first that this is true when $\sigma=\rho$. 
When $\sigma\neq\rho$, choose $\sigma_n\in [\rho,\sigma)$ converging to $\sigma$. 
We assert that\begin{linenomath}\begin{esn}
L_\sigma=\cap_n L_{\sigma_n-},
\end{esn}\end{linenomath}where the right-hand side is measurable by Lemma \ref{StrictLeftIsOpenLemma}. Indeed, $\tilde\sigma\leq\sigma$ implies $\tilde\sigma<\sigma_n$ for all $n$. 
On the other hand, if $\tilde\sigma<\sigma_n$ for all $n$  and $\sigma<\tilde\sigma$, then Lemma \ref{ThreeOrderedPointsLemma} gives $\tilde\sigma\in\tau_{\sigma\wedge\sigma_n}$. 
However, Lemma \ref{BicontinuityOfWedgeOperatorLemma} tells us that $\sigma\wedge\sigma_n\to\sigma$, so that $\tilde\sigma\in \tau_{\sigma}$ which contradicts $\sigma<\tilde\sigma$.

We now give a simple sufficient condition for a sequence to converge.
\begin{lemma}
\label{MonotonicSequencesConvergeLemma}
Any $\leq$-monotonic sequence on $\tau$ converges.
\end{lemma}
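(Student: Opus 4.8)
The plan is to prove that the sequence is Cauchy and then invoke completeness of $\tau$; by reversing the order it suffices to treat a non-decreasing sequence $\sigma_1\le\sigma_2\le\cdots$. First I would encode the ``limiting ancestor'' of the sequence. For each $N$ put $w_N=\bigwedge_{n\ge N}\sigma_n$, the point of $\bigcap_{n\ge N}[\rho,\sigma_n]$ farthest from $\rho$; since an intersection of geodesics issued from $\rho$ is again such a geodesic, $w_N$ is well defined and $[\rho,w_N]=\bigcap_{n\ge N}[\rho,\sigma_n]$. As the tails decrease we have $w_N\preceq w_{N+1}$, so the $w_N$ lie on a single geodesic ray and $N\mapsto d(\rho,w_N)$ is nondecreasing and bounded by the (finite) diameter of $\tau$. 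Hence $(w_N)$ is Cauchy and, by completeness, $w_N\to w_\infty$ for some $w_\infty\in\tau$.

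Next I would reduce everything to the size of the ``overshoots'' $\delta_n:=d(\sigma_n,w_n)$. Since $w_N\preceq\sigma_n$ for $n\ge N$, the point $w_N$ lies on $[\rho,\sigma_n]$ and for $N\le n\le m$ the meeting point $\sigma_n\wedge\sigma_m$ lies on $[w_N,\sigma_n]$; writing distances along geodesics this gives, for $n\le m$,
\[
d(\sigma_n,\sigma_m)=\big(d(\rho,\sigma_n)-d(\rho,\sigma_n\wedge\sigma_m)\big)+\big(d(\rho,\sigma_m)-d(\rho,\sigma_n\wedge\sigma_m)\big)\le \delta_n+\delta_m+\big(d(\rho,w_m)-d(\rho,w_n)\big).
\]
Because $d(\rho,w_n)$ converges, the last bracket tends to $0$; thus once I show $\delta_n\to 0$ the sequence is Cauchy, and combined with $w_n\to w_\infty$ this yields $\sigma_n\to w_\infty$.

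The hard part is the estimate $\delta_n\to 0$, and this is where compactness and the order axioms must interact. I would argue by contradiction: if $\delta_n\ge\delta>0$ along an infinite set $I$ of indices, then each such $\sigma_n$ branches off the limiting ray at the deepening point $w_n$. Using that consecutive terms of a $\le$-increasing sequence are $\preceq$-incomparable (by \textbf{Or1}) together with Lemma~\ref{ThreeOrderedPointsLemma} and \textbf{Or2} to locate the meeting points $\sigma_n\wedge\sigma_m$ near $w_n$, one extracts from $I$ an infinite subfamily that is pairwise $\delta$-separated, contradicting the total boundedness of the compact tree $\tau$ (Lemmas~\ref{BicontinuityOfWedgeOperatorLemma} and~\ref{StrictLeftIsOpenLemma} supply the continuity of $\wedge$ and the closedness of right sets needed to pass limits through $\wedge$ and $\le$). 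The delicate case is a monotone family that approaches a spurious cluster point through nested ancestors (a $\preceq$-decreasing chain), which total boundedness alone does not exclude; here I expect to need the diffuseness and positivity of $\mu$ (\textbf{Mes1}--\textbf{Mes2}), which prevent the associated ``contour'' from failing to admit a one-sided limit along the sequence. Establishing this last exclusion is the main obstacle; with it in hand, $\delta_n\to0$ and the lemma follows.
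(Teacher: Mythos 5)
Your reduction to the estimate $\delta_n=d(\sigma_n,w_n)\to 0$ is sound (the points $w_N$ are well defined, $(w_N)$ is Cauchy, and your displayed inequality is correct), but the proof has a genuine gap exactly where you flag it: $\delta_n\to 0$ is never established, and the route you sketch for it does not work. If $\delta_n\geq\delta$ along an infinite set $I$, the terms $\sigma_n$, $n\in I$, need \emph{not} be pairwise $\delta$-separated: for a specific pair $n<m$ the branch point $\sigma_n\wedge\sigma_m$ only satisfies $w_n\preceq\sigma_n\wedge\sigma_m$ and can lie arbitrarily close to $\sigma_n$ (the overshooting branches may cluster), so $d(\sigma_n,\sigma_m)$ can be small even though both terms are far from the spine. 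Moreover, the parenthetical claim that consecutive terms of a $\leq$-increasing sequence are $\preceq$-incomparable is false: \textbf{Or1} forbids $\sigma_n\prec\sigma_{n+1}$ when $\sigma_n<\sigma_{n+1}$, but it is perfectly consistent to have $\sigma_{n+1}\prec\sigma_n$, which is precisely the ``nested ancestors'' configuration you then worry about. Finally, the hoped-for rescue via \textbf{Mes1}--\textbf{Mes2} is a dead end: the lemma is a purely metric--order statement about the compact rooted tree $\paren{\tau,d,\rho,\leq}$, and its proof in the paper makes no use of $\mu$ whatsoever.

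For comparison, the paper avoids the Cauchy criterion entirely: by compactness it suffices to show that all convergent subsequences have the same limit. If two subsequences converged to $\sigma^1<\sigma^2$, one picks $0<\delta<\imf{d}{\sigma^1,\sigma^1\wedge\sigma^2}$ and the point $\sigma^{1,\delta}\in[\sigma^1,\rho]$ at distance $\delta$ from $\sigma^1$; once one term of the first subsequence enters $\ball{\delta}{\sigma^1}$, monotonicity together with Lemma \ref{ThreeOrderedPointsLemma} traps every later term $\sigma_n$ between two elements of $\ball{\delta}{\sigma^1}$ and hence in $\tau_{\sigma^{1,\delta}}$, which is bounded away from $\sigma^2$ --- contradicting convergence of the second subsequence. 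You may find it easier to complete your argument by adopting this subsequential-limit strategy (or by first extracting a convergent subsequence and using your own computation, which shows $\delta_n\to 0$ \emph{given} convergence, only after the trapping step) rather than by proving $\delta_n\to 0$ a priori.
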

\begin{proof}
Let $\sigma_1,\sigma_2,\ldots$ a $\leq$-monotonic sequence. 
Since $\tau$ is compact, we must only show that all convergent subsequences have the same limit. 

Suppose there exist two subsequences $\sigma_{n_k^1}$ and $\sigma_{n_k^2}$ converging to $\sigma^1$ and $\sigma^2$ where $\sigma^1<\sigma^2$. 
Since $\sigma^1<\sigma^2$, it follows that $\sigma^1\neq \sigma^1\wedge\sigma^2$, so that we can choose $0<\delta<\imf{d}{\sigma^1,\sigma^1\wedge \sigma^2}$.

Let $\sigma^{1,\delta}$ be the element of $[\sigma^1,\rho]$ at distance $\delta$ from $\sigma^1$; in terms of isometries between intervals and paths on the tree, $\sigma^{1,\delta}=\imf{\phi_{\sigma^1,\rho}}{\delta}$. 
Note that for $\sigma,\tilde\sigma\in\ball{\delta}{\sigma^1}$, since $\sigma\wedge\sigma^1\in\ball{\delta}{\sigma^1}$ and similarly for $\tilde\sigma$, then\begin{linenomath}\begin{esn}
[\rho, \sigma^{1,\delta}]\subsetneq [\rho,\sigma\wedge\sigma_1]\cap[\rho,\tilde\sigma\wedge\sigma_1]\subset [\rho,\sigma]\cap[\rho,\tilde\sigma]=[\rho,\sigma\wedge\tilde\sigma]
\end{esn}\end{linenomath} and so $\sigma^{1,\delta}\prec\sigma\wedge\tilde\sigma$.

Let $K\geq 1$ be such that for $k\geq K$, $\imf{d}{\sigma_{n_k^1},\sigma^1}<\delta$. 
For $n\geq n_K^1$ and $k$ large enough (depending on $n$)
\begin{linenomath}\begin{esn}
\sigma_{n_K^1}\leq \sigma_n\leq \sigma_{n_k^1}\quad\text{or}\quad \sigma_{n_K^1}\geq \sigma_n\geq \sigma_{n_k^1}
\end{esn}\end{linenomath}Lemma \ref{ThreeOrderedPointsLemma} implies\begin{linenomath}\begin{esn}
\sigma_n\in \tau_{\sigma_{n_K^1}\wedge \sigma_{n_k^1}}\subset \tau_{\sigma^{1,\delta}},
\end{esn}\end{linenomath}the last inclusion holding because of the preceding paragraph.

Hence $\sigma_1\wedge\sigma_2\prec \imf{\phi_{\sigma^1,\rho}}{\delta}\prec\sigma_n$ so that\begin{linenomath}\begin{esn}
\imf{d}{\sigma_n,\sigma^2}\geq \imf{d}{\sigma^{1,\delta},\sigma_1\wedge\sigma_2}>0
\end{esn}\end{linenomath}and so $\sigma_{n_k^2}$ cannot converge to $\sigma^2$.
\end{proof}

In order to characterize measures on TOM trees, note that the set\begin{linenomath}\begin{esn}
\mc{L}=\set{L_\sigma:\sigma\in\tau}\cup\set{\emptyset}
\end{esn}\end{linenomath}is a $\pi$-system (since $\leq$ is a total order). 
Let us define $\mc{A}=\sag{\mc{L}}$.
\begin{lemma}
\label{PiSystemLemma}
The set $\mc{L}$ generates the Borel subsets of $\tau$. 
\end{lemma}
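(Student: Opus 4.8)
The plan is to prove the two inclusions $\mc A\subseteq\mathscr{B}\paren{\tau}$ and $\mathscr{B}\paren{\tau}\subseteq\mc A$, where $\mc A=\sag{\mc L}$ and $\mathscr{B}\paren\tau$ is the Borel \sa. The first is exactly the measurability of the sets $L_\sigma$ established just before the statement. For the converse I would first collect the sets that come for free: since $\tau\setminus L_\sigma=R_{\sigma+}$, every strict right set lies in $\mc A$, and hence so does every half-open order interval $\set{\tilde\sigma:\sigma_1<\tilde\sigma\leq\sigma_2}=L_{\sigma_2}\cap\paren{\tau\setminus L_{\sigma_1}}$. Writing $\set\sigma=L_\sigma\cap R_\sigma$ and $R_\sigma=\tau\setminus L_{\sigma-}$, the whole matter reduces to one assertion: $L_{\sigma-}\in\mc A$ for every $\sigma$.

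The key step, and the one I expect to be the main obstacle, is precisely $L_{\sigma-}\in\mc A$; here I would finally exploit the measure. By \defin{Mes1} and \defin{Mes2}, for $w'<w$ the Borel set $L_w\setminus L_{w'}=\set{\tilde\sigma:w'\leq\tilde\sigma\leq w}\setminus\set{w'}$ has $\imf\mu{L_w\setminus L_{w'}}>0$, so $w\mapsto\imf\mu{L_w}$ is \emph{strictly} increasing along $\leq$. Since $\tau$ is compact, $\mu$ is finite, and a strictly increasing $[0,\imf\mu\tau]$-valued function on a totally ordered set forbids any uncountable increasing family; this forces countable cofinality of $\paren{L_{\sigma-},\leq}$. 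Concretely, set $s=\sup\set{\imf\mu{L_w}:w<\sigma}$ and pick $w_n<\sigma$ with $\imf\mu{L_{w_n}}\uparrow s$ (taking successive $\leq$-maxima makes $\paren{w_n}$ increasing). Either $L_{\sigma-}$ has a largest element $w^\ast$, in which case $L_{\sigma-}=L_{w^\ast}$, or it does not, in which case $\paren{w_n}$ is cofinal: any $w<\sigma$ with $w\not\leq w_n$ for all $n$ would satisfy $w>w_n$ for all $n$, hence $\imf\mu{L_w}=s$, and $w$ would be maximal in $L_{\sigma-}$, a contradiction. In the second case $L_{\sigma-}=\bigcup_n L_{w_n}$. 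Either way $L_{\sigma-}\in\mc A$, whence $R_\sigma,\set\sigma\in\mc A$ and all order intervals belong to $\mc A$.

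It then remains to recover the metric Borel sets. First, the descendant subtrees lie in $\mc A$: by Lemma \ref{ThreeOrderedPointsLemma} the set $\tau_\sigma$ is order-convex, by \defin{Or1} its $\leq$-maximum is $\sigma$, and its minimum $m_\sigma$ is attained (by the same $\mu$-monotonicity argument run from below, together with Lemma \ref{MonotonicSequencesConvergeLemma}), so $\tau_\sigma=R_{m_\sigma}\cap L_\sigma\in\mc A$. Using these I would show that both the height $w\mapsto\imf d{\rho,w}$ and the branch height $w\mapsto\imf d{\rho,\sigma_0\wedge w}$ are $\mc A$-measurable. For the latter, $\set{w:\imf d{\rho,\sigma_0\wedge w}\geq s}$ is the subtree $\tau_{\sigma_0^s}$ rooted at the ancestor $\sigma_0^s$ of $\sigma_0$ at height $s$. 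For the former, $\set{w:\imf d{\rho,w}>t}=\bigcup_\sigma\paren{\tau_\sigma\setminus\set\sigma}$ over the $\sigma$ at height $t$, and only countably many of these subtrees are nondegenerate (a nondegenerate $\tau_\sigma$ has positive $\mu$ by \defin{Mes1}, and at a fixed level they are disjoint, so finiteness of $\mu$ leaves only countably many), so this is a countable union of $\mc A$-sets.

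Consequently $w\mapsto\imf d{\sigma_0,w}=\imf d{\rho,\sigma_0}+\imf d{\rho,w}-2\imf d{\rho,\sigma_0\wedge w}$ is $\mc A$-measurable, and every open ball $\ball\eps{\sigma_0}$ lies in $\mc A$. As $\tau$ is compact, hence second countable, the balls centred at a countable dense set with rational radii generate $\mathscr{B}\paren\tau$; therefore $\mathscr{B}\paren\tau\subseteq\mc A$ and, with the reverse inclusion, $\mc A=\mathscr{B}\paren\tau$. The whole difficulty is concentrated in the second paragraph: all the topological sets above are built from $L_\sigma$, $R_\sigma$ and the subtrees $\tau_\sigma$, and the passage from $\set{L_\sigma}$ to $R_\sigma$ (equivalently, the measurability of singletons and of strict left sets) is exactly what the finiteness and diffuseness of $\mu$ are needed for.
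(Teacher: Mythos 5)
Your proof is correct in its overall architecture but follows a genuinely different route from the paper's. The paper reduces the problem to showing that every closed ball $C$ lies in $\mc{A}$, writes the open set $\tau\setminus C$ as a countable union of connected components, proves each component is order-convex, and then expresses each component through countably many left sets by approximating the supremum and infimum of $\set{\imf{\mu}{L_{\tilde\sigma}}:\tilde\sigma\in C_i}$. You instead prove that the maps $w\mapsto\imf{d}{\rho,w}$ and $w\mapsto\imf{d}{\rho,\sigma_0\wedge w}$ are $\mc{A}$-measurable (via the subtrees $\tau_\sigma$) and recover open balls from the tree formula for $d$; this avoids the topological detour through connected components and local path-connectedness, at the cost of the extra bookkeeping with $\tau_\sigma$. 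The engine is the same in both proofs: strict monotonicity of $w\mapsto\imf{\mu}{L_w}$, which forces every $\leq$-monotone family to admit a countable cofinal (or coinitial) subfamily. A point in your favour is that you isolate and prove $L_{\sigma-}\in\mc{A}$ explicitly, whereas the paper uses $L_{\underline\sigma-}\in\mc{A}$ inside its case analysis without comment. (Incidentally, your parenthetical that a strictly increasing real-valued function ``forbids any uncountable increasing family'' is not literally true; what your concrete argument actually uses, and all that is needed, is that the supremum of the values is approached along a countable subfamily.)

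One step is wrong as stated, though it is repairable with tools you already set up. You claim that the $\leq$-minimum of $\tau_\sigma$ is always attained. It is not: the Remark after Lemma \ref{DensityOfImageLemma} and Figure \ref{treeWithoutFirstElement} exhibit a compact TOM tree with no $\leq$-first element, for which $\tau_\rho=\tau$ has no minimum, and the same phenomenon can occur at $\sigma\neq\rho$ (graft such a tree at an interior point). The hidden flaw in your justification is that the metric limit of a $\leq$-decreasing sequence furnished by Lemma \ref{MonotonicSequencesConvergeLemma} need not be a $\leq$-lower bound of that sequence: in the no-first-element example, a decreasing sequence with $\imf{\mu}{L_{w_n}}\downarrow 0$ converges to a point $w_\infty$ with $\imf{\mu}{L_{w_\infty}}>0$, since $\imf{\mu}{L_{w_\infty}}=0$ would force $w_\infty$ to be a first element by \defin{Mes1}. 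The fix is exactly your cofinality dichotomy run from below: choose $w_n\in\tau_\sigma$ with $\imf{\mu}{L_{w_n}}$ decreasing to $\inf\set{\imf{\mu}{L_w}:w\in\tau_\sigma}$; either some $w\in\tau_\sigma$ satisfies $w<w_n$ for all $n$, in which case \defin{Mes1} makes it the minimum $m_\sigma$ and $\tau_\sigma=L_\sigma\setminus L_{m_\sigma-}$, or the $w_n$ are coinitial in $\tau_\sigma$ and, by the order-convexity you derived from Lemma \ref{ThreeOrderedPointsLemma}, $\tau_\sigma=\bigcup_n\paren{L_\sigma\setminus L_{w_n-}}$. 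Either way $\tau_\sigma\in\mc{A}$, and the remainder of your argument goes through unchanged.
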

\begin{proof}

Note that $\clo{\ball{\delta}{\sigma}}=\set{\tilde\sigma:\imf{d}{\tilde\sigma,\sigma}\leq \delta}$ and denote the latter set by $C$. 
If suffices to show that $C\in\mc{A}$ (for every $\sigma\in\tau$ and $\delta>0$). 
Since $\tau\setminus C$ is open, it is the union of a countable set of open components, say $C_1,C_2,\ldots$. 
Note that real trees are locally path connected (one way to see this is because closed balls of a real tree are real trees themselves and another one is to prove directly that if $\sigma_1,\sigma_2\in\ball{\delta}{\sigma}$ then $[\sigma_1,\sigma_2]\subset\ball{\delta}{\sigma}$). 
Hence the notions of path-wise connectedness and connectedness coincide for real trees.

We assert that if $\sigma_1,\sigma_2\in C_i$ and $\sigma_1\leq\tilde\sigma\leq\sigma_2$ then $\tilde \sigma\in C_i$. 
Indeed, note first that $[\sigma_1,\sigma_2]\subset C_i$: 
this follows since $[\sigma_1,\sigma_2]\subset \tau\setminus C$ (or $\sigma_1$ and $\sigma_2$ would belong to different path-connected components of $\tau\setminus C$) and then since $[\sigma_1,\sigma_2]$ is connected and has non-empty intersection with $C_i$, it follows that $C_i\cup [\sigma_1,\sigma_2]$ is connected and by definition of connected component, $[\sigma_1,\sigma_2]\subset C_i$. 
If $\sigma_1<\tilde\sigma<\sigma_2$,
Lemma \ref{ThreeOrderedPointsLemma} implies $\tilde\sigma\wedge\sigma_1\in [\sigma_1,\sigma_1\wedge\sigma_2]$.  
Hence $C_i\cup [\sigma_1\wedge\tilde\sigma,\tilde\sigma]$ is connected so that $[\sigma_1\wedge\tilde\sigma,\tilde\sigma]\subset C_i$ and $\tilde\sigma\in C_i$. 

Let\begin{linenomath}\begin{esn}
L_i=\set{L_{\tilde\sigma}:\tilde\sigma\in C_i},\quad I_i=\inf L_i\quad\text{and}\quad S_i=\sup L_i.
\end{esn}\end{linenomath}If $I_i\in L_i$, there exists a $\leq$-smallest element $\underline\sigma$ of $C_i$, and likewise if $S_i\in L_i$ there exists a $\leq$-largest element $\overline \sigma$ of $C_i$ which would imply\begin{linenomath}\begin{esn}
C_i=L_{\overline\sigma}\setminus L_{\underline\sigma-}\in\mc{A}.
\end{esn}\end{linenomath}If $I_i\not\in L_i$ but $S_i\in L_i$, consider a decreasing sequence $\underline\sigma_n$ such that $\imf{\mu}{L_{\underline\sigma_n}}\to I_i$ and note that in this case\begin{linenomath}\begin{esn}
C_i=\bigcup_{n} L_{\overline\sigma}\setminus L_{\sigma_n}\in\mc{A}.
\end{esn}\end{linenomath}Remaining cases are handled similarly, which shows that\begin{linenomath}\begin{esn}
\tau\setminus C=\bigcup C_i\in\mc{A},
\end{esn}\end{linenomath}
which terminates the proof.
\end{proof}
Hence, any measure on the Borel sets of $\tau$ is determined by its values on $\mc{L}$.

\subsection{The contour process of a compact TOM tree}
\label{codingFunctionSubsection}
The reason for introducing compact TOM trees is that they provide us with a simple way to code their elements by real numbers. 
This is formally done through exploration process and the height function now defined. 
First, define $\fun{\psi}{\tau}{[0,\imf{\mu}{\tau}]}$ by\begin{linenomath}\begin{esn}
\imf{\psi}{\sigma}=\imf{\mu}{L_\sigma},
\end{esn}\end{linenomath}where we recall that $L_\sigma$ is the left of $\sigma$. 
Note that $\psi$ is strictly increasing thanks to \defin{Mes}. 
\begin{lemma}
\label{DensityOfImageLemma}
The set $D=\imf{\psi}{\tau}$ is dense in $[0,\imf{\mu}{\tau}]$.
\end{lemma}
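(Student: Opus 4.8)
The plan is to exploit that $\imf{\psi}{\sigma}=\imf{\mu}{L_\sigma}$ is \emph{strictly} increasing (this is exactly where \defin{Mes1} enters, as already noted), so that $\psi$ is an order isomorphism of $\tau$ onto $D$ and a failure of density can be read off as an order-theoretic ``gap.'' Concretely, I would argue by contradiction: if $D$ is not dense in $[0,\imf{\mu}{\tau}]$, pick $0\le a<b\le \imf{\mu}{\tau}$ with $(a,b)\cap D=\emptyset$, and set
\begin{esn}
A=\set{\sigma\in\tau:\imf{\psi}{\sigma}\le a}\quad\text{and}\quad B=\set{\sigma\in\tau:\imf{\psi}{\sigma}\ge b}.
\end{esn}
Since $\psi$ takes no value in $(a,b)$ we have $\tau=A\sqcup B$, and because $\psi$ is increasing, $A$ is a left (downward-closed) set and $B$ a right (upward-closed) set. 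Note $B\neq\emptyset$, as $\imf{\psi}{\rho}=\imf{\mu}{\tau}\ge b$ ($\rho$ being the $\leq$-maximum by \defin{Or1}). The whole proof then reduces to showing that such a cut cannot exist, and the only two obstructions — a ``left jump'' of $\psi$ and an order-adjacent pair of points — are ruled out by \defin{Mes2} and \defin{Mes1} respectively.

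The key elementary observation is that $\psi$ has no left jumps: since $L_\sigma=L_{\sigma-}\cup\set{\sigma}$ and $\mu$ is diffuse (\defin{Mes2}), $\imf{\psi}{\sigma}=\imf{\mu}{L_\sigma}=\imf{\mu}{L_{\sigma-}}$. I would then split on whether $B$ has a $\leq$-minimum. If $\sigma_1=\min B$ exists, then every $\sigma<\sigma_1$ lies in $A$ and conversely every element of $A$ is $<\sigma_1$, so $L_{\sigma_1-}=A$; combining this with the displayed diffuseness identity and the fact that $A=\bigcup_{\sigma\in A}L_\sigma$ is an increasing union gives
\begin{esn}
\imf{\psi}{\sigma_1}=\imf{\mu}{L_{\sigma_1-}}=\imf{\mu}{A}=\sup_{\sigma\in A}\imf{\psi}{\sigma}\le a<b\le\imf{\psi}{\sigma_1},
\end{esn}
a contradiction.

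If instead $B$ has no minimum, I would choose $\tau_n\in B$ with $\imf{\psi}{\tau_n}\downarrow \inf_{\sigma\in B}\imf{\psi}{\sigma}$ and, replacing each $\tau_n$ by the $\leq$-minimum of $\set{\tau_1,\dots,\tau_n}$, arrange the sequence to be $\leq$-decreasing (so $L_{\tau_n}$ decreases). Then $\bigcap_n L_{\tau_n}$ is disjoint from $B$: any $\sigma\in B$ lying below all $\tau_n$ would satisfy $\imf{\psi}{\sigma}\le\inf_B\psi$ and hence, by strict monotonicity, be the minimum of $B$, which we excluded. Therefore $\bigcap_n L_{\tau_n}\subset A$, and continuity from above of the finite measure $\mu$ yields $\imf{\psi}{\tau_n}=\imf{\mu}{L_{\tau_n}}\to\imf{\mu}{\bigcap_n L_{\tau_n}}\le\imf{\mu}{A}\le a<b$, contradicting $\imf{\psi}{\tau_n}\ge b$. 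Either way we reach a contradiction, so no gap exists and $D$ is dense.

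I expect the only genuinely delicate point to be the passage from a topological gap to this purely order-theoretic Dedekind cut, together with the bookkeeping needed to produce a $\leq$-monotone sequence realizing $\inf_B\psi$; Lemma \ref{MonotonicSequencesConvergeLemma} guarantees convergence of such monotone sequences if one prefers to work with limit points, although the argument above sidesteps topology almost entirely and leans only on strict monotonicity of $\psi$ (i.e.\ \defin{Mes1}) and diffuseness of $\mu$ (i.e.\ \defin{Mes2}). Once the cut is set up, the two-case analysis is routine.
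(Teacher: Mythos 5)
Your proof is correct and is essentially the paper's argument in different clothing: both identify the Dedekind cut at a putative gap, use a $\leq$-monotone sequence in the upper set together with continuity of $\mu$ along nested left sets, invoke \textbf{Mes1} for strict monotonicity of $\psi$, and use \textbf{Mes2} to kill the single possible atom at the cut (your ``no left jumps''; the paper's ``$L\setminus G_t$ has at most one element''). The only difference is organizational — you argue by contradiction with an explicit two-case split on whether the upper set has a minimum, whereas the paper shows $s_t=i_t=t$ directly in one pass.
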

\begin{remark}
If a tree does not have a first element, then $0\not\in D$; an actual example can be seen in Figure \ref{treeWithoutFirstElement}. 
\end{remark}
\begin{figure}
\includegraphics[width=.45\textwidth,height=.15\textheight]{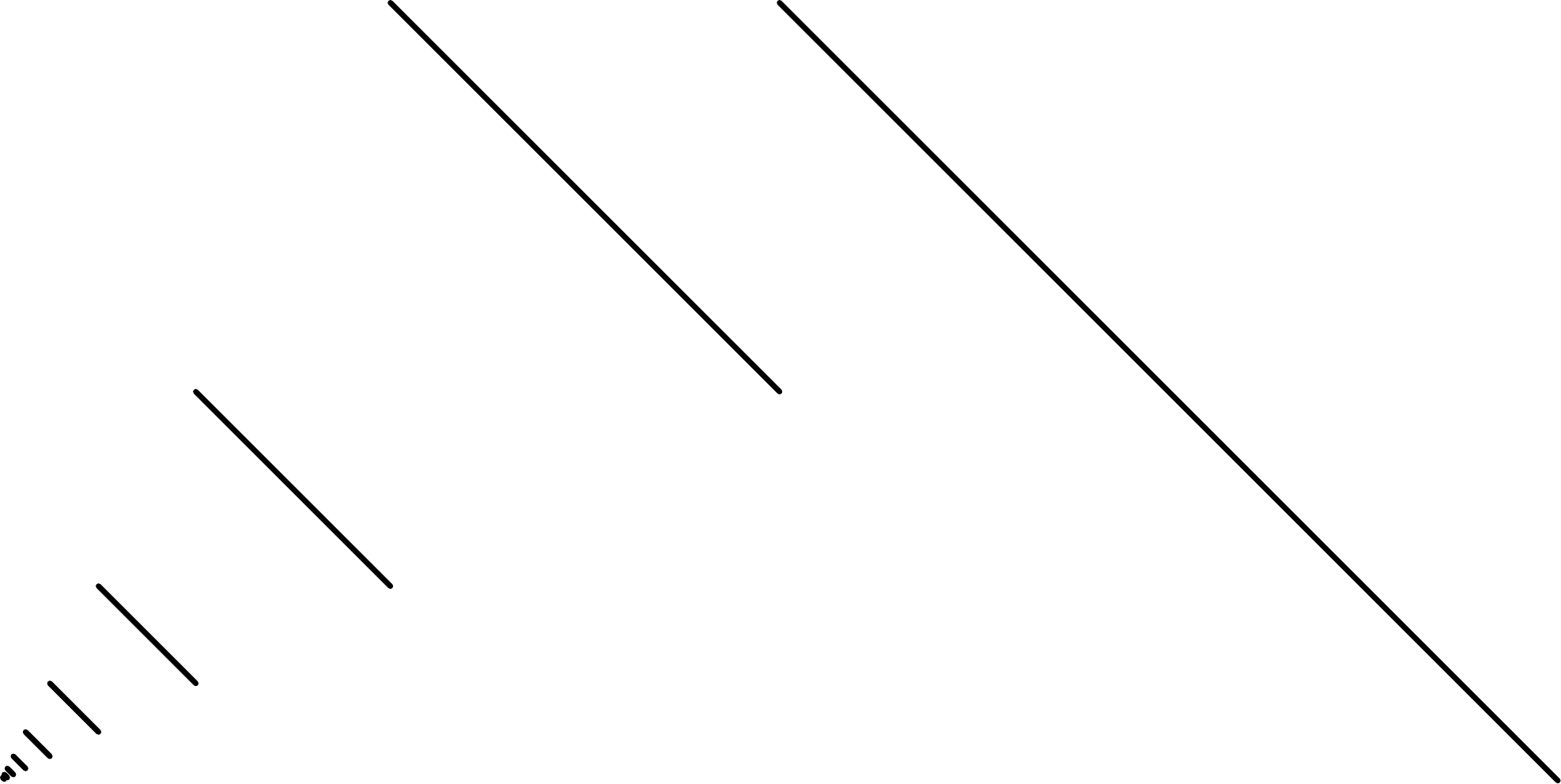}
\hfill
\includegraphics[width=.45\textwidth,height=.15\textheight]{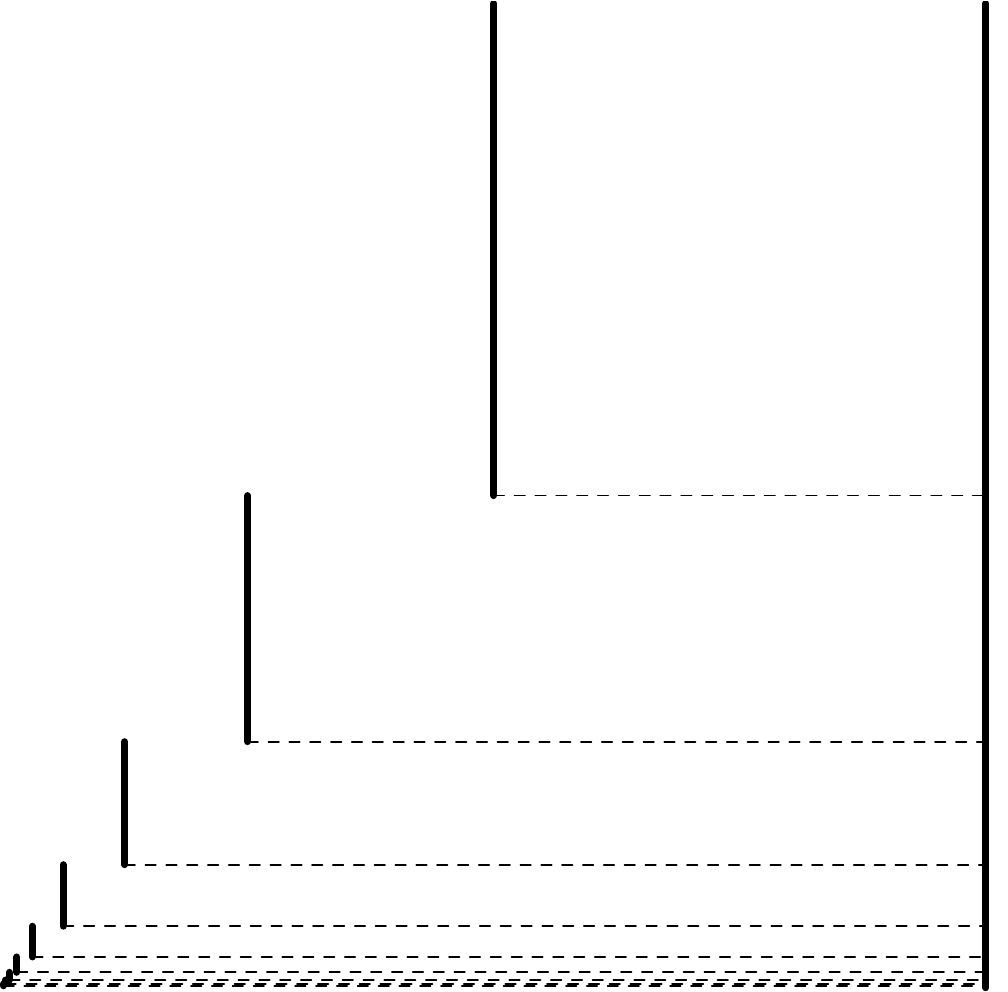}
\caption{A \cadlag\ function coding a TOM tree without a first element (visualized on the right through  vertical segments joined via dashed lines).}
\label{treeWithoutFirstElement}
\end{figure}
\begin{proof}
Let $t\in(0,\imf{\mu}{\tau})$. 
Set\begin{linenomath}\begin{esn}
G_t=\set{\sigma\in\tau: \imf{\psi}{\sigma}<t}
\text{ and }
D_t=\set{\sigma\in\tau: \imf{\psi}{\sigma}\geq t}.
\end{esn}\end{linenomath}Also set
\begin{linenomath}\begin{esn}
s_t=\sup\set{\imf{\psi}{\sigma}:\sigma\in G_t}\quad\text{ and }
i_t=\inf\set{\imf{\psi}{\sigma}:\sigma\in D_t},
\end{esn}\end{linenomath}so that in particular $s_t\leq t\leq i_t$. 

First, notice that for any $\sigma\in G_t$ we have the inclusion $L_\sigma\subset G_t$, so that $G_t$ is necessarily of the form $L_\sigma$ or $L_\sigma\setminus\set{\sigma}$, which yields $\imf{\mu}{G_t}=s_t$. 

Now, by definition of $i_t$, there is some $\leq$-decreasing sequence $(\sigma_n)$ of elements of $D_t$ such that $\imf{\mu}{\sigma_n}\downarrow i_t$. 
Since $(\sigma_n)$ is decreasing, the sequence $L_{\sigma_n}$ is also decreasing; let $L$ denote its limit. 
If there were two elements in $L\setminus G_t$, say $\sigma_1<\sigma_2$, we would have $t\leq \imf{\psi}{\sigma_1}<\imf{\psi}{\sigma_2}\leq i_t$ by \defin{Mes1}. 
Now this contradicts the definition of $i_t$ since $L\setminus G_t\subset D_t$, so $L\setminus G_t$ contains at most one element. 
By \defin{Mes2}, this shows that $\imf{\mu}{L}=\imf{\mu}{G_t}$. 
Now recall that $\imf{\mu}{G_t}=s_t$, so that
\begin{linenomath}\begin{esn}
i_t=\lim_n \imf{\psi}{\sigma_n}=\lim_n \imf{\mu}{L_{\sigma_n}}=\imf{\mu}{L}=\imf{\mu}{G_t}=s_t, 
\end{esn}\end{linenomath}which shows that $i_t=s_t=t$. 
\end{proof}

We can now define the \defin{exploration process} $\fun{\phi}{[0,\imf{\mu}{\tau}]}{\tau}$ by means of\begin{linenomath}\begin{esn}
\imf{\phi}{t}=\lim_{\substack{s\to t+\\ s\in D}}\imi{s}{\psi};
\end{esn}\end{linenomath}notice that $\imi{s}{\psi}$ is $\leq$-decreasing  if $s$ decreases. Therefore, if $s_n\downarrow t$ along $D$, then $\imi{s_n}{\psi}$ converges thanks to Lemma \ref{MonotonicSequencesConvergeLemma}. 
To see that the limit does not depend on the subsequence, notice that if $s_n$ and $\tilde s_n$ both decrease to $t$ along $D$, they can be intertwined into a decreasing sequence which contains them both as subsequences. 

\begin{lemma}
\label{ExplorationProcessLemma}
The exploration process is the unique \cadlag\ extension of $\psi^{-1}$ to $[0,\imf{\mu}{\tau}]$. 
Furthermore, it satisfies the following property: for any $t_1\in [0,\imf{\mu}{\tau}]$,\begin{linenomath}\begin{esn}
\imf{\phi}{\left[t_1,\imf{\psi}{\imf{\phi}{t_1}}\right]}\subset \tau_{\imf{\phi}{t_1}}.
\end{esn}\end{linenomath}
\end{lemma}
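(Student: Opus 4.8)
The plan is to establish, in order, the extension identity $\imf{\phi}{\imf{\psi}{\sigma}}=\sigma$, the \cadlag\ regularity of $\phi$ (whence uniqueness is immediate), and finally the subtree containment; I expect the genuine difficulty to lie in the loose coupling between the order $\le$ and the metric.

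First I would prove that $\phi$ extends $\psi^{-1}$, i.e.\ $\imf{\phi}{\imf{\psi}{\sigma}}=\sigma$ for every $\sigma$. Since $\psi$ is strictly increasing by \defin{Mes}, the map $\psi^{-1}$ is order preserving on $D=\imf{\psi}{\tau}$, so for $s\downarrow \imf{\psi}{\sigma}$ along $D$ the points $\sigma_s=\imi{s}{\psi}$ are $\le$-decreasing with $\sigma_s>\sigma$, lie in the closed set $R_\sigma$, and hence (Lemma \ref{MonotonicSequencesConvergeLemma}) converge metrically to some $\sigma^\ast\ge\sigma$. The point is to exclude $\sigma^\ast>\sigma$: I would do so by showing $\imf{\psi}{\sigma^\ast}\le\imf{\psi}{\sigma}$, which forces $\sigma^\ast=\sigma$ by strict monotonicity. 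This reduces to $\imf{\mu}{\set{\tilde\sigma:\sigma<\tilde\sigma\le\sigma^\ast}}=0$, which I would play off against the vanishing ``added mass'' $\imf{\mu}{\set{\tilde\sigma:\sigma<\tilde\sigma\le\sigma_s}}=s-\imf{\psi}{\sigma}\to0$, controlling how $\sigma^\ast$ sits relative to the $\sigma_s$ through Lemmas \ref{ThreeOrderedPointsLemma} and \ref{BicontinuityOfWedgeOperatorLemma}; any such $\tilde\sigma$ with positive mass (guaranteed by \defin{Mes1} and \defin{Mes2} if $\sigma<\sigma^\ast$) would have to belong to the shrinking sets, a contradiction.

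Next I would check that $\phi$ is \cadlag. Right continuity is nearly built into the definition: for $t_n\downarrow t$ I would choose $u_n\in D$ with $u_n>t_n$, $u_n\to t$ and $\imf{d}{\imi{u_n}{\psi},\imf{\phi}{t_n}}\to0$, so that $\imf{\phi}{t_n}\to\lim_{D\ni u\downarrow t}\imi{u}{\psi}=\imf{\phi}{t}$, using sequence-independence of the defining limit. For left limits I would argue along $D$ from below, picking $v_n\in(t_n,t)\cap D$ so that $\imi{v_n}{\psi}$ is $\le$-increasing, hence metrically convergent by Lemma \ref{MonotonicSequencesConvergeLemma}, with the limit independent of the choice by intertwining two such sequences. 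Uniqueness is then automatic: any two \cadlag\ maps agreeing with $\psi^{-1}$ on the set $D$, which is dense by Lemma \ref{DensityOfImageLemma}, must agree everywhere by right continuity.

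For the containment, write $\sigma_1=\imf{\phi}{t_1}$ and $\sigma_n=\imi{s_n}{\psi}$ for $s_n\downarrow t_1$ in $D$. I would first check that the interval is genuinely forward, $\imf{\psi}{\sigma_1}\ge t_1$, by showing $\bigcap_n L_{\sigma_n}\subset L_{\sigma_1}$ up to a $\mu$-null set: if $\tilde\sigma\le\sigma_n$ for all $n$ but $\tilde\sigma>\sigma_1$, then $\sigma_1\le\tilde\sigma\le\sigma_n$ and Lemma \ref{ThreeOrderedPointsLemma} places $\tilde\sigma\in\tau_{\sigma_1\wedge\sigma_n}$; since $\sigma_1\wedge\sigma_n\to\sigma_1$ by Lemma \ref{BicontinuityOfWedgeOperatorLemma} and $\preceq$ is closed (as $\sigma\preceq\sigma'$ iff $\sigma\wedge\sigma'=\sigma$), this gives $\sigma_1\preceq\tilde\sigma$, hence $\tilde\sigma\le\sigma_1$ by \defin{Or1}, a contradiction. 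For the containment proper, since $\tau_{\sigma_1}=\set{\sigma':\sigma_1\preceq\sigma'}$ is closed and $\phi$ is right continuous, it suffices to prove $\sigma_1\preceq\imf{\phi}{t}$ for $t$ in the dense set $D\cap[t_1,\imf{\psi}{\sigma_1}]$, where $\imf{\phi}{t}=\imi{t}{\psi}$; there I would convert the $\psi$-inequalities $t_1\le\imf{\psi}{\imf{\phi}{t}}\le\imf{\psi}{\sigma_1}$ into the genealogical relation $\sigma_1\preceq\imf{\phi}{t}$ by the same use of Lemma \ref{ThreeOrderedPointsLemma}, together with the fact that $\sigma_1$ is the $\le$-maximum of $\tau_{\sigma_1}$ by \defin{Or1}. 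The recurring obstacle in every step is that strict left sets are open while left sets need not be closed (Lemma \ref{StrictLeftIsOpenLemma} and the following remark), so metric limits of $\le$-monotone sequences may ``overshoot'' in the order and $\phi$ is itself \emph{not} $\le$-monotone; consequently $\imf{\psi}{\imf{\phi}{t_1}}$ cannot be read off as an order-infimum, and each passage to the limit must be routed through the ancestry lemmas rather than through naive monotonicity.
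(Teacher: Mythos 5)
Your proposal is correct and follows essentially the same route as the paper's proof: the same monotone-limit-plus-measure-comparison argument (via Lemma \ref{MonotonicSequencesConvergeLemma} and \defin{Mes1}--\defin{Mes2}) for the extension identity $\imf{\phi}{\imf{\psi}{\sigma}}=\sigma$, the same interlacing-with-$D$ argument for the \cadlag\ property and uniqueness, and the same use of Lemma \ref{ThreeOrderedPointsLemma} together with the bicontinuity of $\wedge$ to push the genealogical relation through the limit in the containment step. The only differences are cosmetic: you exclude $\sigma^\ast<\sigma$ via closedness of $R_\sigma$ where the paper uses openness of $L_{\sigma-}$ (the same fact), you close up the containment via closedness of $\tau_{\imf{\phi}{t_1}}$ and right-continuity rather than the paper's double limit along $D$, and you make explicit the preliminary check $t_1\le\imf{\psi}{\imf{\phi}{t_1}}$ that the paper leaves implicit; the one genuinely delicate point, namely that the metric limit $\sigma^\ast$ of the decreasing family $\sigma_s$ satisfies $\sigma^\ast\le\sigma_s$ (so that the escaping mass really is carried by the shrinking sets), is left at the same level of terseness in your plan as in the paper's own proof.
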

\begin{proof}
Note first that $\phi=\psi^{-1}$ on $D$. 
Indeed, if $t\in D$, say $t=\imf{\mu}{L_\sigma}$, and $t_n\in D$ decreases to $t$, we let $\sigma_n=\imi{t_n}{\psi}\geq \sigma$. 
Since $\sigma_n$ is decreasing it converges thanks to Lemma \ref{MonotonicSequencesConvergeLemma}. 
Call the limit $\tilde \sigma$. 
The inequality $\tilde \sigma<\sigma$ is impossible since $\tilde\sigma<\sigma\leq \sigma_n$ and  Lemma \ref{StrictLeftIsOpenLemma} imply that $\sigma_n\not\to \tilde\sigma$. 
Bu the same argument, $\tilde\sigma\leq \sigma_n$ for all $n$. 
Then, as in \ref{DensityOfImageLemma}, we note that $\bigcap_n L_{\sigma_n}\setminus L_{\tilde \sigma}$ has at most one element, since if it contains an element $\tilde\tilde\sigma>\tilde \sigma$, then $\sigma_n\not\to\sigma$. We conclude that $\imf{\mu}{L_{\sigma_n}}\to\imf{\mu}{L_\sigma}$. 
Then, the inequality $\sigma<\tilde \sigma$ implies the contradiction\begin{linenomath}\begin{esn}
t=\lim_n t_n=\lim_n\imf{\mu}{L_{\sigma_n}}=\imf{\mu}{L_{\tilde \sigma}}>\imf{\mu}{L_\sigma}=t
\end{esn}\end{linenomath}by use of \defin{Mes1}, so that $\sigma_n$ decreases to $\sigma$ and so $\imf{\phi}{t}=\imi{t}{\psi}$.

We now prove that $\phi$ is right-continuous. 
Let $t<\imf{\mu}{\tau}$ and $t_n\downarrow t$ with $t_n>t$. 
Then there exists $\tilde t_n\in D$ such that\begin{linenomath}\begin{esn}
t<t_n<\tilde t_n, \quad\tilde t_n-t_n\to 0
\quad\text{ and }
\quad\imf{d}{\imf{\phi}{t_n},\imi{\tilde t_n}{\psi}}\to 0. 
\end{esn}\end{linenomath}By monotonicity of $\psi^{-1}$ on $D$, we see that\begin{linenomath}\begin{esn}
\imf{\phi}{t}=\lim_{n\to\infty}\imi{\tilde t_n}{\psi}=\lim_{n\to\infty}\imf{\phi}{t_n}.
\end{esn}\end{linenomath}

A similar argument shows that $\phi$ has left limits: note first that by monotonicity the limit\begin{linenomath}\begin{esn}
\lim_{\substack{s\to t-\\s\in D}}\imi{s}{\psi}
\end{esn}\end{linenomath}exists. 
If $t_n\uparrow t$ with $t_n<t$, there exist $\tilde t_n\in D$ such that\begin{linenomath}\begin{esn}
t_n<\tilde t_n<t,\quad \tilde t_n-t_n\to 0\quad\text{and}\quad \imf{d}{\imf{\phi}{t_n},\imi{\tilde t_n}{\psi}}\to 0.
\end{esn}\end{linenomath}By monotonicity we see that\begin{linenomath}\begin{esn}
\lim_{\substack{s\to t-\\s\in D}}\imi{s}{\psi}=\lim_{n} \imi{t_n}{\psi},
\end{esn}\end{linenomath}so that\begin{linenomath}\begin{esn}
\imf{\phi}{t-}=\lim_{\substack{s\to t-\\s\in D}}\imi{s}{\psi}.
\end{esn}\end{linenomath}
Uniqueness of the extension follows from density of $D$.

We finally prove that\begin{linenomath}\begin{esn}
\imf{\phi}{[t_1,\imf{\mu}{L_{\imf{\phi}{t_1}}}]}\subset \tau_{\imf{\phi}{t_1}}.
\end{esn}\end{linenomath}Let us then consider\begin{linenomath}\begin{esn}
t_1<t<\imf{\mu}{L_{\imf{\phi}{t_1}}}.
\end{esn}\end{linenomath}Take $t^n_1\in D$ decreasing to $t_1$ and $t^n\in D$ decreasing to $t$ and let $\sigma^n_1=\imf{\phi}{t^n_1}$ and $\sigma^n=\imf{\phi}{t^n}$. 
Then, by definition, $\sigma^n_1\to\imf{\phi}{t_1}$ and $\sigma^n\to\imf{\phi}{t}$. 
Since $\psi^{-1}$ is order preserving and coincides with $\phi$ on $D$, we see that\begin{linenomath}\begin{esn}
\sigma^n_1\leq \sigma^n\leq \imf{\phi}{t_1}.
\end{esn}\end{linenomath}Lemma \ref{ThreeOrderedPointsLemma} implies
\begin{linenomath}\begin{esn}
\sigma^n\wedge \sigma_1^n\in [\sigma_1^n,\sigma_1^n\wedge \imf{\phi}{t_1}].
\end{esn}\end{linenomath}
By passing to the limit in the expression\begin{linenomath}\begin{esn}
\sigma_1^n\wedge \imf{\phi}{t_1}\preceq \sigma_1^n\wedge\sigma^n,
\end{esn}\end{linenomath}using Lemma \ref{BicontinuityOfWedgeOperatorLemma}, we get\begin{linenomath}\begin{esn}
\imf{\phi}{t_1}\preceq\imf{\phi}{t_1}\wedge\imf{\phi}{t},
\end{esn}\end{linenomath}
which ends the proof.
\end{proof}

We now define the \defin{contour process} $\fun{f}{[0,\imf{\mu}{\tau}]}{\tau}$ of $\tr{c}=\paren{\paren{\tau,d,\rho},\leq,\mu}$ by\begin{linenomath}\begin{esn}
f_t
=\imf{d}{\rho,\imf{\phi}{t}}.
\end{esn}\end{linenomath}

\begin{proposition}
\label{HeightProcessCodesTreeProposition}
The contour process $f$ is \cadlag\ with no negative jumps and the compact TOM tree coded by $f$ is isomorphic to $\tr{c}$.
\end{proposition}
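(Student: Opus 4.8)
The plan is to read both assertions off the exploration process $\phi$ of Lemma~\ref{ExplorationProcessLemma} by isolating a single geometric fact, the ``Min identity'' $\min_{u\in[s,s']}f_u=d(\rho,\phi(s)\wedge\phi(s'))$, and then checking the structural properties one by one. Since $\phi$ is \cadlag\ and $\sigma\mapsto d(\rho,\sigma)$ is $1$--Lipschitz, $f_t=d(\rho,\phi(t))$ is automatically \cadlag\ with $f_{t-}=d(\rho,\phi(t-))$, so the only analytic issue is the sign of the jumps. I would define the candidate isomorphism $\Theta\colon\tau_f\to\tau$ by $\Theta([s]_f)=\phi(s)$; since $\psi^{-1}(D)=\tau$ and $D$ is dense (Lemma~\ref{DensityOfImageLemma}), $\Theta$ is onto, and everything else will follow once the Min identity is available.

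First I would prove the Min identity. For the inequality ``$\geq$'', fix $s<s'$ and set $c=\phi(s)\wedge\phi(s')$; reducing to $s,u,s'\in D$ and using that $\psi^{-1}$ is $\leq$--monotone one has $\phi(s)\leq\phi(u)\leq\phi(s')$ for $s\leq u\leq s'$, whence Lemma~\ref{ThreeOrderedPointsLemma} gives $\phi(u)\in\tau_{c}$, so $f_u\geq d(\rho,c)$; the general $u$ follows by density and right--continuity because $\tau_c$ is closed. For ``$\leq$'', when $c\notin\{\phi(s),\phi(s')\}$ the points $\phi(s),\phi(s')$ lie in distinct components of $\tau\setminus\{c\}$, so letting $u^\ast$ be the last time in $[s,s']$ at which $\phi$ sits in the $\phi(s)$--component forces $\phi(u^\ast)=c$ or $\phi(u^\ast-)=c$ by the \cadlag\ property, giving $\inf_{[s,s']}f\leq d(\rho,c)$. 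The Min identity yields at once that $d_f(s,s')=f_s+f_{s'}-2\min_{[s,s']}f$ equals $d(\phi(s),\phi(s'))$, hence $s\sim_f s'\iff\phi(s)=\phi(s')$, so $\Theta$ is a well-defined bijective isometry.

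The key payoff for the jumps is that the Min identity already forbids \emph{branching} jumps: applying it with $s'=t$ and $s\uparrow t$ and using Lemma~\ref{BicontinuityOfWedgeOperatorLemma} gives $\min(f_{t-},f_t)=d(\rho,\phi(t-)\wedge\phi(t))$, and if $w:=\phi(t-)\wedge\phi(t)$ were a \emph{strict} common ancestor of both then $d(\rho,w)<\min(f_{t-},f_t)$, a contradiction. Hence at every discontinuity $\phi(t-)$ and $\phi(t)$ are $\preceq$--comparable, so $w\in\{\phi(t-),\phi(t)\}$. It remains to exclude the downward case $\phi(t)\prec\phi(t-)$. Here $\phi(t)$ is a strict ancestor of $\phi(t-)$, so by \defin{Or1} every $p$ on the segment $(\phi(t),\phi(t-))$ satisfies $\phi(t-)\leq p\leq\phi(t)$ and is visited at time $\psi(p)=\mu(L_p)\in(\psi(\phi(t-)),t)$, with $\phi(\psi(p))=p$; as $p$ runs up toward $\phi(t)$ these visiting times accumulate at $t$ while $p\to\phi(t)$, forcing $\phi(t-)=\phi(t)$ and contradicting the presence of a jump. \emph{This last step is the one I expect to be delicate}: the visiting times can fail to reach $t$ if a subtree is grafted at $\phi(t)$, and one must argue that such a subtree is explored \emph{ending at its root} $\phi(t)$ (it is the $\leq$--maximal element of its descendants, by \defin{Or1}), so that again $\phi(u)\to\phi(t)$ as $u\uparrow t$; making this uniform over possible accumulations of grafted subtrees at $\phi(t)$ is the crux. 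Granting it, $\phi(t-)\preceq\phi(t)$ and $f_{t-}\le f_t$ at every $t$, so $f$ has no negative jumps.

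Finally I would verify that $\Theta$ respects root, order and measure, which is routine. It sends the root $[m]_f$ to $\phi(m)=\rho$, the $\leq$--maximal point (note $L_\rho=\tau$, so $\psi(\rho)=m$). It is order preserving because the total order on $\tau_f$ is determined by the supremum of time--classes and $\phi$ agrees with the $\leq$--monotone map $\psi^{-1}$ on the dense set $D$. For the measure it suffices, by Lemma~\ref{PiSystemLemma}, to match $\mu$ and $\leb\circ\phi^{-1}$ on the $\pi$--system $\mathscr{L}=\{L_\sigma\}$: using that $\phi$ explores $L_\sigma$ exactly over a time interval of length $\psi(\sigma)=\mu(L_\sigma)$ up to the countably many branch times, $\phi^{-1}(L_\sigma)$ differs from $[0,\psi(\sigma)]$ by a Lebesgue--null set, so $\mu(L_\sigma)=\psi(\sigma)=(\leb\circ\phi^{-1})(L_\sigma)$. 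This simultaneously records the identity $\mu=\leb\circ\phi^{-1}$ invoked in Proposition~\ref{HeightProcessOfTruncatedChronologicalTreeProposition}, and completes the proof that $f$ is \cadlag\ with non-negative jumps and that $\Theta$ is an isomorphism of TOM trees.
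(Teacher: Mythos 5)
Your architecture is genuinely different from the paper's --- you make the single identity $\min_{[s,s']}f=d\bigl(\rho,\imf{\phi}{s}\wedge\imf{\phi}{s'}\bigr)$ carry both the isometry and the sign of the jumps, whereas the paper first shows $\phi$ is constant on $\sim_f$-classes by a case analysis and then recovers the isometry from distances to the root --- but the step you yourself flag as delicate is not merely delicate: it rests on a false inequality. If $\imf{\phi}{t}\prec\imf{\phi}{t-}$ and $p\in(\imf{\phi}{t},\imf{\phi}{t-})$, then \defin{Or1} does give $\imf{\phi}{t-}\leq p\leq\imf{\phi}{t}$, but monotonicity of $\psi=\imf{\mu}{L_\cdot}$ then forces $\imf{\psi}{\imf{\phi}{t-}}\leq\imf{\psi}{p}$; moreover $\imf{\phi}{u}\leq\imf{\phi}{t-}$ for every $u\in D$ with $u<t$ (use Lemma \ref{StrictLeftIsOpenLemma}), so $u=\imf{\psi}{\imf{\phi}{u}}\leq\imf{\psi}{\imf{\phi}{t-}}$ and hence $\imf{\psi}{\imf{\phi}{t-}}\geq t$. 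Thus $\imf{\psi}{p}\geq t$: the interval $(\imf{\psi}{\imf{\phi}{t-}},t)$ you place the visiting times in is empty, the points of the segment are explored \emph{after} $t$, and they cannot accumulate at $t$ from the left, so nothing forces $\imf{\phi}{t-}=\imf{\phi}{t}$. The correct mechanism runs in the opposite time direction (and is what the paper's first case does): since $\imf{\psi}{p}>t$ by \defin{Mes1}, for $s\in(t,\imf{\psi}{p})\cap D$ one has $\imf{\phi}{t-}\leq\imf{\phi}{s}\leq p$, so Lemma \ref{ThreeOrderedPointsLemma} puts $\imf{\phi}{s}$ in $\tau_p$ and gives $\imf{f}{s}\geq\imf{d}{\rho,p}$; letting $s\downarrow t$ and then $p\to\imf{\phi}{t-}$ along the segment yields $\imf{f}{t}\geq\imf{f}{t-}$, contradicting the assumed downward jump. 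Your worry about grafted subtrees is a red herring; the problem is the direction of time.

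There is a second, related gap in the ``$\leq$'' half of your Min identity. From $u^\ast=\sup\set{u\in[s,s']:\imf{\phi}{u}\in C_{\imf{\phi}{s}}}$ you conclude $\imf{\phi}{u^\ast}=c$ or $\imf{\phi}{u^\ast-}=c$ ``by the \cadlag\ property''. But a \cadlag\ path may jump from deep inside one component of $\tau\setminus\set{c}$ to deep inside another: right-continuity and openness of components only give $\imf{\phi}{u^\ast-}\in C_{\imf{\phi}{s}}\cup\set{c}$ and $\imf{\phi}{u^\ast}\notin C_{\imf{\phi}{s}}$, which leaves exactly the bad case open. Ruling it out amounts to knowing that $\imf{\phi}{u^\ast-}$ and $\imf{\phi}{u^\ast}$ are $\preceq$-comparable --- which is what you later \emph{deduce from} the Min identity --- so as written the argument is circular. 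You would need to establish ``$\leq$'' by a direct order/measure argument (for instance via the visiting times $\imf{\psi}{c_\eps}$ of points $c_\eps$ just above $c$, combined with \defin{Or2} and the connectedness argument of Lemma \ref{PiSystemLemma}) before the rest of your scheme --- which is otherwise sound, including the surjectivity, root, order and measure verifications --- can go through.
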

\begin{proof}
While it is clear that $f$ is \cadlag, it is less evident that its jumps are non-negative. 
However, if $\imf{f}{t}<\imf{f}{t-}$ then $\imf{\phi}{t-}\not\in [\rho,\imf{\phi}{t}]$. 
We obtain a contradiction by analyzing the cases $\imf{\phi}{t-}<\imf{\phi}{t}$ or  $\imf{\phi}{t}<\imf{\phi}{t-}$. 
In the first case, any element of $(\imf{\phi}{t-},\imf{\phi}{t-}\wedge \imf{\phi}{t})$ has a measure exceeding $t$ and is $< \imf{\phi}{t}$. 
Indeed let $\sigma\in (\imf{\phi}{t-},\imf{\phi}{t-}\wedge \imf{\phi}{t})$ and suppose that $t_n\in D$ increase to $t$ so that $\imf{\phi}{t_n}\to\imf{\phi}{t-}$. 
By Lemma \ref{BicontinuityOfWedgeOperatorLemma}\begin{linenomath}\begin{esn}
\imf{\phi}{t_n}\wedge \imf{\phi}{t-}\to\imf{\phi}{t-}
\end{esn}\end{linenomath}and so for large enough $n$, $\imf{\phi}{t_n}\leq \imf{\phi}{t_n}\wedge \imf{\phi}{t-}<\sigma$ so that\begin{linenomath}\begin{esn}
t_n=\imf{\mu}{L_{\imf{\phi}{t_n}}}<\imf{\mu}{L_\sigma}.
\end{esn}\end{linenomath}In the second case, for any $\sigma\in L_{\imf{\phi}{t}}$, we have the bound
\begin{linenomath}
\begin{esn}
\imf{d}{\imf{\phi}{t-},\sigma}\geq \imf{d}{\imf{\phi}{t-},\imf{\phi}{t}\wedge \imf{\phi}{t-}}
\end{esn}\end{linenomath}(since $\sigma$ cannot belong to $[\imf{\phi}{t}\wedge \imf{\phi}{t-},\imf{\phi}{t-}]$). 
However, $\imf{\phi}{t_n}\in L_{\imf{\phi}{t}}$, which contradicts the fact that $\imf{\phi}{t_n}\to \imf{\phi}{t-}$. 

Let us now analyze the tree coded by $f$. 
Recall that if we define the pseudo-distance\begin{linenomath}\begin{esn}
\imf{d_f}{t_1,t_2}=\imf{f}{t_1}+\imf{f}{t_2}-2\inf_{t\in [t_1,t_2]} \imf{f}{t},
\end{esn}\end{linenomath}then the tree coded by $f$ is the quotient space of $[0,\imf{\mu}{\tau}]$ under the equivalence relation $\sim_f$ given by $t_1\sim_f t_2$ if $\imf{d_f}{t_1,t_2}=0$. 

We first prove that $\phi$ is constant on the equivalence classes of $\sim_f$. 
Indeed, consider $t_1<t_2$ such that $\imf{\phi}{t_1}\neq \imf{\phi}{t_2}$ and let us prove that $t_1\not \sim_f t_2$. 
We have 3 cases: \begin{description}
\item[$\imf{\phi}{t_1}\prec \imf{\phi}{t_2}$] Then $\imf{f}{t_1}<\imf{f}{t_2}$ and so $t_1\not\sim_f t_2$.
\item[$\imf{\phi}{t_2}\prec \imf{\phi}{t_1}$] Then $\imf{f}{t_2}<\imf{f}{t_1}$ and so $t_1\not\sim_f t_2$.
\item[$\imf{\phi}{t_1}\wedge\imf{\phi}{t_2}\neq \imf{\phi}{t_1},\imf{\phi}{t_2}$] Note that $t_1<t_2\leq\imf{\mu}{L_{\imf{\phi}{t_2}}}$. 
We now use Lemma \ref{ExplorationProcessLemma} and the fact that $\imf{\phi}{t_2}\not\in\tau_{\imf{\phi}{t_1}}$ to deduce\begin{linenomath}\begin{esn}
t_1\leq \imf{\mu}{L_{\imf{\phi}{t_1}}}<t_2\leq \imf{\mu}{L_{\imf{\phi}{t_2}}}.
\end{esn}\end{linenomath}Hence $\imf{\phi}{t_1}<\imf{\phi}{t_2}$. 
Let $\sigma\in (\imf{\phi}{t_1},\imf{\phi}{t_1}\wedge\imf{\phi}{t_2})$. 
Then\begin{linenomath}\begin{esn}
t_1\leq \imf{\mu}{L_{\imf{\phi}{t_1}}}<\imf{\mu}{L_\sigma}<\imf{\mu}{L_{\imf{\phi}{t_2}}}. 
\end{esn}\end{linenomath}Also, since $\sigma\not\in \tau_{\imf{\phi}{t_2}}$, then, actually,  $\imf{\mu}{L_\sigma}<t_2$. 
We then have:\begin{linenomath}\begin{esn}
\imf{\mu}{L_\sigma}\in (t_1,t_2)\quad\text{and}\quad \imf{f}{\imf{\mu}{L_\sigma}}=\imf{d}{\rho,\sigma}<\imf{d}{\rho,\imf{\phi}{t_1}}=\imf{f}{t_1},
\end{esn}\end{linenomath}so that $t_1\not\sim_f t_2$.
\end{description}

Abusing notation, we can then define  $\phi$ on the equivalence class $[t]_f$ of $t$ under $\sim_f$ as $\imf{\phi}{t}$. 
$\phi$ is a bijection from the set $\tau_f$ of equivalence classes under $\sim_f$ to $\tau$: it is surjective because $\phi$ is surjective on $[0,\imf{\mu}{\tau}]$, since
\begin{linenomath}\begin{esn}
\imf{\phi}{\imf{\psi}{\sigma}}=\sigma, 
\end{esn}\end{linenomath}and it is injective thanks to the following argument. 
If $\imf{\phi}{t_1}=\imf{\phi}{t_2}$ and $t_1<t_2$, then\begin{linenomath}\begin{esn}
\imf{f}{t_1}=\imf{f}{t_2}\quad\text{and}\quad t_2\leq \imf{\mu}{L_{\imf{\phi}{t_2}}}=\imf{\mu}{L_{\imf{\phi}{t_1}}}. 
\end{esn}\end{linenomath}By Lemma \ref{ExplorationProcessLemma}, we see that $\imf{\phi}{[t_1,t_2]}\subset\tau_{\imf{\phi}{t_1}}$, so that for every $t\in[t_1,t_2]$, $\imf{f}{t}\geq \imf{f}{t_1}$. 
Hence, $t_1\sim_f t_2$.

It remains to see that $\phi$ is an isometry between $\tau_f$ and $\tau$ which preserves root, order and measure. 

Since $\tau=L_\rho$, we see that $\rho=\imf{\phi}{\imf{\mu}{\tau}}$ and that $f$ reaches its minimum (which is zero) at $\imf{\mu}{\tau}$. 
Hence, $\rho_f=[\imf{\mu}{\tau}]_f$ and $\imf{\phi}{\rho_f}=\rho$. 
This implies that for all $t\in\tau_f$:\begin{equation}
\label{DistanceToRootIsPreservedEquation}
\imf{d}{\imf{\phi}{t},\rho}=\imf{f}{t}=\imf{d_f}{t,\imf{\mu}{\tau}}.
\end{equation}

The bijection $\fun{\phi}{\tau_f}{\tau}$ is an isometry by the following observations: 
firstly, on every compact rooted real tree distances between every pair of elements is determined by distances to the root, thanks to 
the formula
\begin{linenomath}\begin{esn}
\imf{d}{\sigma_1,\sigma_2}=\imf{d}{\sigma_1,\rho}+\imf{d}{\sigma_2,\rho}-2\imf{d}{\sigma_1\wedge \sigma_2,\rho}. 
\end{esn}\end{linenomath}Hence, if $\tilde d$ is a metric on $\tau$ and $(\tau,\tilde d,\rho)$ is a real tree, then $\imf{\tilde d}{\sigma,\rho}=\imf{d}{\sigma,\rho}$ implies that $d=\tilde d$. 
Equation \eqref{DistanceToRootIsPreservedEquation} then proves that $\phi$ is an isometry. 

To see that $\phi$ preserves order,  note first that $\sup[t]_f=\mu(L_{\imf{\phi}{t}})$ for every $t$. 
Indeed, if $\mu(L_{\imf{\phi}{t}})<\tilde t$, since $\tilde t\leq \mu(L_{\phi(\tilde t)})$, we see that\begin{linenomath}\begin{esn}
\imf{\mu}{L_{\imf{\phi}{t}}}<\imf{\mu}{L_{\imf{\phi}{\tilde t}}}
\end{esn}\end{linenomath}so that $\imf{\phi}{t}<\imf{\phi}{\tilde t}$ and $\tilde t\not\in [t]_f$. 
Hence, if $t_1,t_2$ are such that $\sup[t_1]_f< \sup[t_2]_f$ then $\mu(L_{\imf{\phi}{t_1}})<\mu(L_{\imf{\phi}{t_2}})$  by \defin{Mes1}, so that $\imf{\phi}{t_1}<\imf{\phi}{t_2}$.

Finally, to see that $\phi$ is measure preserving, recall that $p_f$ stands for the projection of $[0,\imf{\mu}{\tau}]$ into $\tau_f$ (sending $t$ to its equivalence class) defined on page \pageref{ProjectionFromIntervalToRealTreeDefinition} and that $\mu_f=\leb\circ p_f^{-1}$ by definition. 
We need to prove the equality $\mu=\leb\circ p_f^{-1}\circ \phi^{-1}$ (where, due to our abuse of notation, $\phi$ is defined on $\tau_f$). 
Note first that $\phi\circ p_f$ is the exploration process (also denoted $\phi$). 
Then, by Lemma \ref{PiSystemLemma}, it suffices to prove that $\mu=\leb\circ  \phi^{-1}$ on $\mc{L}$. 
However, $\set{t:\imf{\phi}{t}\leq \sigma}=[0,\imf{\mu}{L_\sigma}]$ since $\imf{\phi}{t}=\imf{\phi}{\imf{\mu}{L_{\imf{\phi}{t}}}}$. 
Hence $\mu=\leb \circ \phi^{-1}$ on $\mc{L}$. 
 \end{proof}
 We can finally turn to the proof of Theorem \ref{CRTCodingTheorem}.
 \begin{proof}[Proof of Theorem \ref{CRTCodingTheorem}]
 Existence of a coding function for the tree follows from Proposition \ref{HeightProcessCodesTreeProposition}. 
 
 However, if two trees $\tr{c}_1$ and $\tr{c}_2$ have the same coding function $f$, then Proposition \ref{HeightProcessCodesTreeProposition} also implies that $\tr{c}_1$ and $\tr{c}_2$ are isomorphic to $\tr{c^f}$, so that they are isomorphic to each other. 
\end{proof}


\subsection{Elementary operations on compact TOM trees}
\label{ElementaryOperationsOnCCRTsSection}
 Consider a compact TOM tree $
 \tr{c}=\paren{\paren{\tau,d,\rho},\leq,\mu} $. 
 The objective of this section is to consider two operations on $\tr{c}$ which allow one to construct another TOM tree and we explore the relationships between the contour processes. 
 Consider any element $\sigma$ of $\tau$.
\begin{definition}
 We define the \defin{subtree to the right of $\sigma$}, denoted $\tr{c}_{\geq \sigma}$, equal to $R_\sigma$ with the same root as $\tr{c}$, and with the distance, order and measure obtained by restricting those of $\tr{c}$ to $R_\sigma$. 

We define the \defin{subtree rooted at $\sigma$}, denoted $\tr{c}_{\succeq \sigma}$, equal to $\tau_\sigma=\set{\sigma'\in\tau: \sigma\preceq \sigma'}$ with root $\sigma$, with distance, total order and measure restricted to $\tau_\sigma$.

\end{definition}
\begin{proposition}
For any compact TOM tree $\tr{c}$ and any $\sigma\in\tau$,  $\tr{c}_{\geq \sigma}$ and $\tr{c}_{\succeq \sigma}$ are compact TOM trees. 
\begin{description}
\item[The contour process of $\tr{c}_{\geq \sigma}$]If $\fun{f}{[0,m]}{[0,\infty)}$ is the contour process of $\tr{c}$ and $s^*=\imf{\mu}{L_\sigma}$ then the contour process $f_{\geq \sigma}$ of $\tr{c}_{\geq \sigma}$ is constructed as follows:\begin{linenomath}\begin{esn}
\imf{f_{\geq \sigma}}{t}=\imf{f}{s^*+t}\quad\text{defined on }[0,m-s^*]. 
\end{esn}\end{linenomath}Furthermore, if $\imf{\phi}{s}=\sigma$ then\begin{linenomath}\begin{esn}
s^*=\inf\set{t\geq s: \imf{f}{t}<\imf{f}{s}}=\sup[s]_f. 
\end{esn}\end{linenomath}

\item[The contour process of $\tr{c}_{\succeq \sigma}$]The contour process $f_{\succeq \sigma}$ of $\tr{c}_{\succeq\sigma}$ is defined on $[0,s^*-s_*]$ where\begin{linenomath}\begin{esn}
s^*=\imf{\mu}{L_\sigma}
\text{ and }s_*=\inf\set{\imf{\mu}{L_{\tilde \sigma}}: \tilde\sigma\in\tau_\sigma}. 
\end{esn}\end{linenomath}%
We have the explicit formula\begin{linenomath}\begin{esn}
\imf{f_{\succeq\sigma}}{t}= \imf{f}{t+s_*}-\imf{f}{s^*}\quad\text{ for }t\in [0,s^*-s_*].
\end{esn}\end{linenomath}Furthermore, if $\imf{\phi}{s}=\sigma$ then\begin{linenomath}\begin{esn}
s^*=\inf\set{t\geq s: \imf{f}{t}<\imf{f}{s}}=\sup[s]_f\quad\text{and}\quad s_*=\sup\set{t\leq s: \imf{f}{t}<\imf{f}{s}}=\inf[s]_f. 
\end{esn}\end{linenomath}
%
\end{description}
\end{proposition}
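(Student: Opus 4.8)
The plan is to transfer every claim to the contour $f$ of $\tr{c}$ through the exploration process $\phi$ and the identity $\mu=\leb\circ\phi^{-1}$ of Proposition~\ref{HeightProcessCodesTreeProposition}, using that $\imf{\phi}{\imf{\psi}{\sigma}}=\sigma$ and $\sup[s]_f=\imf{\mu}{L_{\imf{\phi}{s}}}$. First I would realise the two subtrees as $\phi$-images of intervals. Since $R_\sigma=\set{\tilde\sigma:\imf{\mu}{L_{\tilde\sigma}}\geq s^*}$ and $\imf{\psi}{\tilde\sigma}=\sup[\tilde\sigma]_f$, order preservation of $\phi$ gives $R_\sigma=\imf{\phi}{[s^*,m]}$, while diffuseness (\defin{Mes2}) yields $\imf{\mu}{R_\sigma}=m-\imf{\mu}{L_{\sigma-}}=m-s^*$, matching the stated domain. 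For $\tau_\sigma$, Lemma~\ref{ThreeOrderedPointsLemma} shows that $\tau_\sigma$ is $\leq$-order-convex, so $\tau_\sigma=\set{\tilde\sigma:\beta\leq\tilde\sigma\leq\sigma}$ for its $\leq$-least element $\beta$, with $\imf{\psi}{\beta}=s_*$ and $\imf{\mu}{\tau_\sigma}=\imf{\psi}{\sigma}-\imf{\psi}{\beta}=s^*-s_*$. Both subsets are closed—$R_\sigma$ as the complement of the open strict left (Lemma~\ref{StrictLeftIsOpenLemma}) and $\tau_\sigma$ by bicontinuity of $\wedge$ (Lemma~\ref{BicontinuityOfWedgeOperatorLemma})—hence compact as metric spaces.

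To pin the exploration interval of $\tau_\sigma$ I would use that $s_*,s^*\in[s]_f$, so $\imf{m_f}{s_*,s^*}=\imf{f}{s^*}=\imf{d}{\rho,\sigma}$ and $f\geq\imf{d}{\rho,\sigma}$ on $[s_*,s^*]$. For $t\in[s_*,s^*]$ the most recent common ancestor satisfies $\imf{d}{\rho,\imf{\phi}{t}\wedge\sigma}=\imf{m_f}{t,s^*}\geq\imf{d}{\rho,\sigma}$, which forces $\imf{\phi}{t}\wedge\sigma=\sigma$, that is $\sigma\preceq\imf{\phi}{t}$; hence $\imf{\phi}{[s_*,s^*]}=\tau_\sigma$. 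The contour formulas then follow by matching pseudo-distances. For $\tr{c}_{\geq\sigma}$, set $g=\imf{f}{s^*+\cdot}$: it is \cadlag\ with non-negative jumps, vanishes at $m-s^*$, and satisfies $\imf{d_g}{t_1,t_2}=\imf{d_f}{s^*+t_1,s^*+t_2}=\imf{d}{\imf{\phi}{s^*+t_1},\imf{\phi}{s^*+t_2}}$ because the defining infima run over the shifted interval. For $\tr{c}_{\succeq\sigma}$, set $g=\imf{f}{\cdot+s_*}-\imf{f}{s^*}$: the offset $\imf{f}{s^*}=\imf{d}{\rho,\sigma}$ is the height of the new root and cancels in $\imf{d_g}{t_1,t_2}=\imf{d_f}{t_1+s_*,t_2+s_*}$, while $\imf{g}{t}=\imf{d}{\rho,\imf{\phi}{t+s_*}}-\imf{d}{\rho,\sigma}=\imf{d}{\sigma,\imf{\phi}{t+s_*}}$ by $\sigma\preceq\imf{\phi}{t+s_*}$. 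In both cases $\phi$ descends to an order- and measure-preserving isometry onto $R_\sigma$, respectively $\tau_\sigma$, exactly as in the proof of Proposition~\ref{HeightProcessCodesTreeProposition}; this simultaneously identifies the coded trees with the subsets and shows the latter are compact TOM trees.

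It remains to establish the passage descriptions of $s^*$ and $s_*$. The equality $s^*=\sup[s]_f$ is the formula $\sup[s]_f=\imf{\mu}{L_\sigma}$ already recorded, and $s_*$ is the left endpoint of the excursion of $f$ above level $\imf{f}{s}$ straddling $s$. For $s^*=\inf\set{t\geq s:\imf{f}{t}<\imf{f}{s}}$ I would argue: since $s,s^*\in[s]_f$ we have $f\geq\imf{f}{s}$ on $[s,s^*]$, giving one inequality; for the reverse, each $t>s^*$ has $\imf{\phi}{t}>\sigma$ with $\sigma\not\preceq\imf{\phi}{t}$ (a strict descendant would be $\leq\sigma$ by \defin{Or1}), so either $\imf{\phi}{t}\prec\sigma$ and $\imf{f}{t}<\imf{f}{s}$ directly, or $\imf{\phi}{t}$ and $\sigma$ are incomparable, whence $\sigma\wedge\imf{\phi}{t}\prec\sigma$ strictly and $\imf{m_f}{s^*,t}=\imf{d}{\rho,\sigma\wedge\imf{\phi}{t}}<\imf{f}{s}$ forces $f$ below $\imf{f}{s}$ inside $(s^*,t]$; letting $t\downarrow s^*$ gives the other inequality. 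Here \defin{Or2} guarantees that the incomparable case really descends strictly below $\sigma$, and the absence of negative jumps guarantees the infimum is attained. The last-passage identity for $s_*$ is the mirror image.

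I expect this reverse inequality—showing that $f$ returns below $\imf{f}{s}$ immediately past $s^*$ (and just before $s_*$)—to be the main obstacle, since it is precisely where the order axioms \defin{Or1} and \defin{Or2}, the strict descent of the common ancestor, and the no-negative-jump property of $f$ all have to be combined; everything else is bookkeeping carried through $\phi$.
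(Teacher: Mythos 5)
Your overall strategy coincides with the paper's: realise $R_\sigma$ and $\tau_\sigma$ as $\phi$-images of the intervals $[s^*,m]$ and $[s_*,s^*]$, compute $\mu$ of left sets to shift the exploration process, and read the contour formulas off the isometry of Proposition~\ref{HeightProcessCodesTreeProposition}; your first-passage argument for $s^*$ (the inequality $f\geq\imf{f}{s}$ on $[s,\imf{\mu}{L_\sigma}]$ from Lemma~\ref{ExplorationProcessLemma} in one direction, and $\sigma\wedge\imf{\phi}{t}\prec\sigma$ forcing $\imf{m_f}{s^*,t}<\imf{f}{s}$ in the other) is exactly the paper's. Up to that point the proposal is sound, and your use of the $\leq$-least element $\beta$ of $\tau_\sigma$ (legitimate by closedness, Lemma~\ref{MonotonicSequencesConvergeLemma} and strict monotonicity of $\psi$) is a harmless variant of the paper's approximating sequence.

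The genuine gap is in the treatment of $s_*$. You anchor the identification of $\imf{\phi}{[s_*,s^*]}$ with $\tau_\sigma$ on the claim that $s_*,s^*\in[s]_f$, and you dispose of the last identities by calling them ``the mirror image'' of the $s^*$ case. But $s_*\notin[s]_f$ in general: $\imf{\phi}{s_*}=\beta$ is the $\leq$-least element of $\tau_\sigma$, typically a strict descendant of $\sigma$ (a leaf of the subtree), so $\imf{f}{s_*}=\imf{d}{\rho,\beta}>\imf{f}{s}$. (Take a root segment of height $1$ with a single child attached at height $1/4$ reaching height $3/4$, unit sojourn, and let $\sigma$ be the child's point at height $1/2$: the exploration enters $\tau_\sigma$ at the child's tip, where $f=3/4\neq 1/2$.) This makes your derivation of $f\geq\imf{f}{s}$ on $[s_*,s^*]$ circular --- that inequality is precisely what identifies $\imf{\phi}{[s_*,s^*]}$ with $\tau_\sigma$, and it must instead be obtained as in the paper, by proving $\imf{\psi}{\tau_\sigma}=[s_*,s^*]\cap D$ directly from the definitions of $s_*,s^*$ as $\inf$ and $\sup$ of $\imf{\psi}{\tau_\sigma}$ together with order-convexity of $\tau_\sigma$ (Lemma~\ref{ThreeOrderedPointsLemma}) and density of $D$ (Lemma~\ref{DensityOfImageLemma}). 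More importantly, the situation is not time-symmetric: the exploration leaves $\tau_\sigma$ through $\sigma$ at $s^*$ (whence $s^*\in[s]_f$) but enters it at $\beta$ at time $s_*$, so no reflection of your $s^*$ argument can yield $\inf[s]_f=s_*$; what a correct one-sided argument gives from the left is only the last-passage description $s_*=\sup\set{t\leq s:\imf{f}{t}<\imf{f}{s}}$, via the fact that any $t<s_*$ in $D$ has $\imf{\phi}{t}\wedge\sigma\prec\sigma$ strictly, and this step needs to be written out rather than invoked by symmetry.
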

\begin{proof}
Note that $\rho\in R_\sigma$. 
We have seen in Lemma \ref{StrictLeftIsOpenLemma} that $R_\sigma$ is closed. 
If $\sigma\leq \sigma_1\leq \sigma_2$ then $[\sigma_1,\sigma_2]\subset R_\sigma$, so that $R_\sigma$ is a closed and pathwise connected subset of a compact real tree. 
Hence $R_\sigma$, with the induced distance is a compact real tree. 
The total order, when restricted to $R_\sigma$, satisfies properties \defin{Or1} and \defin{Or2}. 
 Restriction to $R_\sigma$ does not alter \defin{Mes1} or \defin{Mes2}. 
 This implies that $\tr{c}_{\geq \sigma}$ is a compact TOM tree. 
 To find its contour process $f_{\geq \sigma}$, note that if $\sigma'\in R_\sigma$ then\begin{linenomath}\begin{esn}
\imf{\mu}{L_{\sigma'}\cap R_\sigma}=\imf{\mu}{L_{\sigma'}}-\imf{\mu}{L_\sigma}=\imf{\mu}{L_{\sigma'}}-s^*.
\end{esn}\end{linenomath}Hence, the exploration process $\phi_{\geq\sigma}$ of $\tr{c}_{\geq \sigma}$ is given by\begin{linenomath}\begin{esn}
\imf{\phi_{\geq \sigma}}{s}=\imf{\phi}{s^*+s}
\end{esn}\end{linenomath}and we deduce the equality $\imf{f_{\geq \sigma}}{s}=\imf{f}{s^*+s}$. 

It remains to prove that if $\imf{\phi}{s}=\sigma$ (or in other words, $s$ is a representative of $\sigma$ when considering $\tr{c}$ as the tree coded by $f$) then\begin{linenomath}\begin{esn}
\imf{\mu}{L_\sigma}=\inf\set{t\geq s: \imf{f}{t}<\imf{f}{s}}. 
\end{esn}\end{linenomath}By Lemma \ref{ExplorationProcessLemma}, between $s$ and $\imf{\mu}{L_{\sigma}}$, $\phi$ explores $\tau_{\sigma}$, so that $f\geq \imf{f}{s}$  on $[s,\imf{\mu}{L_\sigma}]$ which implies the inequality $\imf{\mu}{L_\sigma}\leq s^*$. 
On the other hand, if $s'\in D$ belongs to a right neighborhood of $\imf{\mu}{L_\sigma}$ then $\sigma<\imf{\phi}{s'}$ which implies that $\sigma\wedge \imf{\phi}{s'}\in (\sigma,\rho]$ which in turns tells us that $\imf{m_f}{\imf{\mu}{L_\sigma},s'}<\imf{f}{s}=\imf{f}{s^*}$. 
Hence $s^*\leq s'$ and since $s'$ is an arbitrary element of $D$ and any right neighborhood of $\imf{\mu}{L_\sigma}$, we obtain $s^*\leq\imf{\mu}{L_\sigma}$ and so in fact $s^*=\imf{\mu}{L_\sigma}$.

The last argument is also valid in the setting of $\tr{c}_{\succeq \sigma}$ so that 
$s^*
=\inf\set{t\geq s: \imf{f}{t}<\imf{f}{s}}
$. 
Actually, we have the equality\begin{linenomath}\begin{esn}
\imf{\psi}{\tau_\sigma}=[s_*,s^*]\cap D: 
\end{esn}\end{linenomath}since if $\sigma\preceq\sigma'$ and $\imf{\phi}{s'}=\sigma'$ then $\imf{m_f}{s',s^*}=\imf{f}{s^*}$, so that $\imf{\psi}{\tau_\sigma}\subset [s_*,s^*]\subset D$ and on the other hand, if $s'\in [s_*,s^*]\cap D$ with $s'=\imf{\mu}{L_{\sigma'}}$ then $\imf{m_f}{s',s^*}\geq \imf{f}{s^*}$ by definition of $s_*$ and so $\sigma'\in\tau_\sigma$. 
We now take $s_n$ decreasing to $s_*$ with $s_n=\imf{\psi}{\sigma_n}$ through $D$ and note that\begin{linenomath}\begin{esn}
\tau_\sigma=\bigcup_n L_{\sigma}\setminus L_{\sigma_n}.
\end{esn}\end{linenomath}This implies\begin{linenomath}\begin{esn}
\imf{\mu}{L_{\sigma'}\cap\tau_\sigma}=\lim_{n\to\infty}\imf{\mu}{L_{\sigma'}}-\imf{\mu}{L_{\sigma_n}}=\imf{\psi}{\sigma'}-s_*.
\end{esn}\end{linenomath}for any $\sigma'\in\tau_\sigma$. 

We therefore obtain the following relationship between the exloration process $\phi_{\succeq\sigma}$ of $\tau_\sigma$ and $\phi$:
\begin{linenomath}\begin{esn}
\imf{\phi_{\succeq\sigma}}{s}=\imf{\phi}{s+s_*}.
\end{esn}\end{linenomath}Finally, we note that for any $\sigma'\in\tau_\sigma$\begin{linenomath}\begin{esn}
\imf{d}{\sigma',\sigma}=\imf{d}{\sigma',\rho}-\imf{d}{\sigma',\sigma}
\end{esn}\end{linenomath}to conclude that\begin{linenomath}\begin{esn}
\imf{f_{\succeq\sigma}}{s}=\imf{f}{s}-\imf{f}{s_*}
\end{esn}\end{linenomath}for $s\leq s^*-s_*$.
\end{proof}

\subsection{Topological remarks about the space of compact TOM trees}
Given that compact TOM trees can be identified with a subset of \cadlag\ functions,
 thanks to Theorem \ref{CRTCodingTheorem}, 
 we see that TOM trees constitute a set (without the need of passing to isometry classes). 
Also, we can define the distance between compact TOM trees $\tr{c}_1$ and $\tr{c}_2$ 
in terms of the distance of their contours $f_1$ and $f_2$ as follows. 
Suppose that the supports of $f_i$ is $[0,m_i]$ and that $m_1<m_2$, say. 
First extend $f_1$ to $[0,m_2]$ by declaring it constant on $[m_1,m_2]$. 
We then define\begin{linenomath}\begin{esn}
\imf{d}{\tr{c_1},\tr{c}_2}=\imf{d_{m_2}}{f_1,f_2}+\abs{m_2-m_1}, 
\end{esn}\end{linenomath}where $d_{m_2}$ is the Skorohod $J_1$ distance on $[0,m_2]$ defined as
\begin{linenomath}\begin{esn}
\imf{d_{m_2}}{f_1,f_2}=\sup_{\lambda}\sup_{s\leq m_2}\abs{\imf{f_1}{s}-\imf{f_2\circ \lambda}{s}}
\end{esn}\end{linenomath}
and $\lambda$ runs over all strictly increasing continuous functions of $[0,m_2]$ into itself. 
By uniqueness of the contour in Theorem \ref{CRTCodingTheorem}, 
we see that $d$ is a metric on compact TOM trees. 
If $\tr{c_n}=\paren{\paren{\tau_n,d_n,\rho_n},\leq_n,\mu_n}$ is a sequence of compact TOM trees  
converging to $\tr{c}$ under the metric $d$, 
then, a slight generalization of  Proposition 2.10 of \cite{MR3201925} 
(to cover \cadlag\ functions as done in \cite{saoPauloAmaury}) 
shows us that the rooted measured metric spaces $\paren{\paren{\tau_n,d_n,\rho_n},\mu_n}$ 
converge in the Gromov-Hausdorff-Prokhorov topology associated to the metric $d_{GHP}$ defined as follows:
\begin{linenomath}\begin{align*}
&\imf{d_{GHP}}{\paren{\paren{\tau_1,d_1,\rho_1},\mu_1},\paren{\paren{\tau_n,d_n,\rho_n},\mu_n}}
\\&=\inf_{\phi_1,\phi_2, \tau}
\bra{\imf{d^\tau_H}{\imf{\phi_1}{\tau_1},\imf{\phi_2}{\tau_2}}
+\imf{d^\tau}{\imf{\phi_1}{\rho_1},\imf{\phi_2}{\rho_2}}
+\imf{d^\tau_P}{\mu_1\circ \phi_1^{-1},\mu_2\circ \phi_2^{-1}} }
\end{align*}\end{linenomath}where the infimum runs over all isometric embeddings $\phi_i$ from $\tau_i$ into a common metric space $\tau$, $d^\tau_H$ denotes the Hausdorff distance between subsets of $\tau$, and $d^\tau_P$ stands for the Prokhorov distance between finite measures on $\tau$.

\providecommand{\bysame}{\leavevmode\hbox to3em{\hrulefill}\thinspace}
\providecommand{\MR}{\relax\ifhmode\unskip\space\fi MR }
\providecommand{\MRhref}[2]{%
  \href{http://www.ams.org/mathscinet-getitem?mr=#1}{#2}
}
\providecommand{\href}[2]{#2}

\bibliography{GenBib}
\bibliographystyle{amsalpha}
\end{document}